\def\@cite#1#2{{\m@th\upshape\bfseries%
[{#1\if@tempswa{\m@th\upshape\mdseries, #2}\fi}]}}
\theoremstyle{plain}
\newtheorem{theorem}{Theorem}[section]
\newtheorem{corollary}[theorem]{Corollary}
\newtheorem{proposition}[theorem]{Proposition}
\newtheorem{lemma}[theorem]{Lemma}
\theoremstyle{definition}
\newtheorem{definition}[theorem]{Definition}
\newtheorem{example}[theorem]{Example}
\newtheorem{remark}[theorem]{Remark}
\newtheorem*{acknow}{Acknowledgements}
\theoremstyle{remark}
\newcommand{\bC}{{\mathds{C}}}
\newcommand{\bF}{{\mathds{F}}}
\newcommand{\bR}{{\mathds{R}}}
\newcommand{\bT}{{\mathds{T}}}
\newcommand{\bZ}{{\mathds{Z}}}
  \newcommand{\A}{{\mathcal{A}}}
  \newcommand{\B}{{\mathcal{B}}}
  \newcommand{\N}{{\mathcal{N}}}
\renewcommand{\O}{{\mathcal{O}}}
  \newcommand{\T}{{\mathcal{T}}}
\newcommand{\eps}{\varepsilon}
\renewcommand{\phi}{\varphi}
\def\si{\sigma}
\def\al{\alpha}
\def\be{\beta}
\def\La{\Lambda}
\def\la{\lambda}
\def\om{\omega}
\def\Om{\Omega}
\def\de{\delta}
\def\ga{\gamma}
\newcommand\vpi{\varphi}
\newcommand\wpi{\widetilde{\pi}}
\newcommand{\fA}{{\mathfrak{A}}}
\newcommand{\Bi}{{\mathbf{i}}}
\newcommand{\Bn}{{\mathbf{n}}}
\newcommand{\foral}{\text{ for all }}
\newcommand{\qand}{\quad\text{and}\quad}
\newcommand{\qfor}{\quad\text{for}\ }
\newcommand{\ca}{\mathrm{C}^*}
\newcommand{\ol}{\overline}
\newcommand{\wt}{\widetilde}
\newcommand{\wh}{\widehat}
\newcommand{\ad}{\operatorname{ad}}
\newcommand{\Aut}{\operatorname{Aut}}
\newcommand{\End}{\operatorname{End}}
\newcommand{\id}{{\operatorname{id}}}
\newcommand{\im}{{\operatorname{Im}}}
\newcommand{\mt}{\emptyset}
\newcommand{\re}{\operatorname{Re}}
\newcommand{\spn}{\operatorname{span}}
\newcommand{\supp}{\operatorname{supp}}
\newcommand{\sca}[1]{\left\langle#1\right\rangle} 
\newcommand{\nor}[1]{\left\Vert #1\right\Vert} 
\newcommand{\bo}[1]{\mathbf{#1}} 
\newcommand{\un}[1]{\underline{#1}} 
\begin{document}

\title{On Nica-Pimsner algebras of C*-dynamical systems over $\mathds{Z}_+^n$}

\author[E.T.A. Kakariadis]{Evgenios T.A. Kakariadis}
\address{School of Mathematics and Statistics\\ Newcastle University\\ Newcastle upon Tyne\\ NE1 7RU\\ UK}
\email{evgenios.kakariadis@ncl.ac.uk}

\thanks{2010 {\it  Mathematics Subject Classification.}
46L05, 46L55, 46L30, 58B34}
\thanks{{\it Key words and phrases:} Exact and nuclear C*-algebras, KMS states, Nica-Pimsner algebras.}

\maketitle

\begin{abstract}
We examine Nica-Pimsner algebras associated with semigroup actions of $\mathbb{Z}_+^n$ on a C*-algebra $A$ by $*$-endomorphisms.
We give necessary and sufficient conditions on the dynamics for exactness and nuclearity of the Nica-Pimsner algebras.
Furthermore we parameterize the KMS states at finite temperature by the tracial states on $A$.
A parametrization is also shown for KMS states at zero temperature (resp. ground states) by the tracial states on $A$ (resp. states on $A$).
Finally we give a formula for obtaining tracial states on the Nica-Pimsner algebras.
\end{abstract}

\section*{Introduction}

This project lies in the intersection of three ongoing programs on analysing C*-algebras associated with C*-dynamics.
There has been a continuous interest for a generalised C*-crossed product construction of possibly non-invertible semigroup actions.
Notable examples from the one-variable case are the works of Paschke \cite{Pas80}, Peters \cite{Pet84}, Stacey \cite{Sta93}, Murphy \cite{Mur96}, and Exel \cite{Exe03} (to mention only a few).
The key idea is to consider the quotient of a Fock algebra by an ideal of redundancies.
Our standing point of view follows Arveson's program on the C*-envelope \cite{KakPet13}, and suggests that this ideal is the \v{S}ilov ideal of a nonselfadjoint operator algebra in the sense of Arveson \cite{Arv69}.
In contrast to the case of the usual C*-crossed products, these redundancies may be very complicated to allow an in-depth exploration.
An alternate route is suggested in the joint work of the author with Davidson and Fuller \cite[Introduction]{DFK14}: dilate a possibly non-invertible semigroup action $\al \colon P \to \End(A)$ to a group action $\wt\be \colon G \to \Aut(\wt{B})$ such that the distinguished quotient related to $\al \colon \bZ_+^n \to \End(A)$ is a full corner of the usual C*-crossed product related to $\wt\be \colon G \to \Aut(\wt{B})$.
Consequently the examination of the distinguished quotient can be induced via the more exploit-able and more exploit-ed theory of usual C*-crossed products.
This pathway was met with success for several semigroup actions including Ore semigroups and spanning cones in \cite{DFK14}, and in earlier works of the author for $P = \bZ_+$ \cite{Kak11}, and of the author with Katsoulis for $P = \bF_+^n$ \cite{KakKat11}.

In the current paper we focus on the case $P = \bZ_+^n$.
This connects to the third program concerning the structure of C*-algebras of product systems.
For this paper a \emph{C*-dynamical system over $\bZ_+^n$} consists of a semigroup action $\al \colon \bZ_+^n \to \End(A)$ on a C*-algebra $A$ by $*$-endomorphisms.
One can then associate two C*-algebras that reflect the lattice order structure of $\bZ_+^n$: the Toeplitz-Nica-Pimsner algebra $\N\T(A,\al)$ and the Cuntz-Nica-Pimsner algebra $\N\O(A,\al)$ \cite[Chapter 4]{DFK14}.
These algebras arise naturally in the theory of product systems, e.g. see Fowler \cite{Fow02}, Solel \cite{Sol06}, Deaconu, Kumjian, Pask and Sims \cite{DKPS10}, Sims and Yeend \cite{SimYee10}, Carlsen, Larsen, Sims and Vittadello \cite{CLSV11}.
The existence of $\N\T(A,\al)$ is readily verified by the existence of a Fock representation.
For $\N\O(A,\al)$ the situation is very different.

Carlsen, Larsen, Sims, and Vittadello \cite{CLSV11} show that Cuntz-Nica-Pimsner algebras over $\bZ_+^n$ exist as co-universal objects that satisfy a gauge invariant uniqueness theorem.
This remarkable result leaves open the exploration of their algebraic structure.
In particular describing them as universal C*-algebras with respect to a family of generators and relations.
A concrete picture of \emph{a} C*-algebra $\N\O(A,\al)$ associated with $\al \colon \bZ_+^n \to \End(A)$ was given by the author with Davidson and Fuller \cite[Definition 4.3.16]{DFK14}.
Surprisingly this was achieved through the theory of nonselfadjoint operator algebras: $\N\O(A,\al)$ is the C*-envelope of the ``lower triangular'' part of $\N\T(A,\al)$ \cite[Theorem 4.3.7 and Corollary 4.3.18]{DFK14}.
Dilation theory was used essentially to construct a non-trivial Cuntz-Nica representation in the sense of \cite{DFK14}; then $\N\O(A,\al)$ is shown to be universal with respect to these representations.
The meeting point is then achieved by showing that $\N\O(A,\al)$ satisfies a gauge invariant uniqueness theorem \cite[Theorem 4.3.17]{DFK14}; thus $\N\O(A,\al)$ coincides with the Cuntz-Nica-Pimsner algebra of \cite{CLSV11}.

In the current continuation of \cite{DFK14} we examine further the C*-structure of $\N\T(A,\al)$ and $\N\O(A,\al)$ in terms of nuclearity, exactness and KMS states.
There is an analogy of the first part with the results of Katsura on C*-correspondences \cite[Section 7]{Kat04} from which we were strongly influenced.
Nevertheless the analysis is perplexed because of the multidimensional lattice structure of $\bZ_+^n$.
We bypass this obstacle by breaking the complexity of the algebra of $\N\O(A,\al)$ into two steps, one of which is carried out at the automorphic dilation level.
For the second part, we were intrigued by the ongoing program of Laca and Raeburn \cite{LacRae10}, as well as by the growing interest in the structure of KMS states on C*-algebras (deducting symmetry and/or phase transition breaking) from the seminal work of Bost and Connes \cite{BosCon95}.
See for example \cite{HLRS14, CDL12, ExeLac03, HaPau05, Lac98, LacNes04, LacNes11, LacRae10, LRR11, LRRW13} to mention but a few inspiring works.
For our analysis we follow the previous works of Laca and Neshveyev \cite{LacNes04, LacNes11}, Laca and Raeburn \cite{LacRae10}, and Laca, Raeburn, Ramagge and Whittaker \cite{LRRW13}.
Nevertheless we enrich our results by making use of the gauge invariant uniqueness theorems, and the automorphic dilation trick established in \cite[Chapter 4]{DFK14}.
For example we allow for the action to be supported in fewer coordinates, in contrast to \cite{HLRS14}, and we use the connection to a usual C*-crossed product to produce tracial states.
This scheme is being successfully implemented also in a sequel of the current paper which concerns C*-dynamical systems over $\bF_+^n$ by row isometries \cite{Kak14-2}.
In the course of our study, the idea of creating multivariable KMS conditions comes up naturally.
We explore it further in the appendix.

We strongly feel that our methods extend to Nica-Pimsner algebras of other types of product systems.
Furthermore we believe that they may be even used to tackle the general case, once an algebraic characterization of Nica-Pimsner algebras is achieved in total.

\subsection*{Main results}

In the first part of the paper we give necessary and sufficient conditions for exactness and nuclearity of $\N\T(A,\al)$ and $\N\O(A,\al)$.
In Theorem \ref{T: exact TNP} (resp. Theorem \ref{T: nuclearity TNP}) we show that $\N\T(A,\al)$ is exact (resp. nuclear) if and only if $A$ is exact (resp. nuclear).
In Theorem \ref{T: exact CNP} we show that $\N\O(A,\al)$ is exact if and only if $A$ is exact.
Nuclearity of $A$ suffices to imply nuclearity of $\N\O(A,\al)$, however it is not necessary even for injective systems (see \cite[Example 7.7]{Kat04} and Remark \ref{R: nuclear CNP}).
In Theorem \ref{T: nuclear CNP} we give a necessary and sufficient condition for nuclearity of $\N\O(A,\al)$ in terms of the automorphic dilation $\wt{\be} \colon \bZ^n \to \Aut(\wt{B})$.

In the second part we assume that the system is unital, and we examine the $(\si,\be)$-KMS states of $\N\T(A,\al)$ and $\N\O(A,\al)$ at inverse temperature $\be = 1/T$.
The gauge action $\{\ga_{\un{z}}\}_{\un{z} \in \bT^n}$ on the Nica-Pimsner algebras induces a non-trivial action $\si$ of $\bR$ by the automorphisms
\[
\si_t = \ga_{(\exp(\la_1it), \dots, \exp(\la_n it))} \qfor \un{\la} = (\la_1, \dots, \la_n) \in \bR^n.
\]
Let us refer here just on the results where $\la_k \neq 0$ for all $k=1, \dots, n$.
We note that the gauge invariant uniqueness theorems of \cite[Chapter 4]{DFK14} is rather helpful to tackle the case where $\la_k=0$ for some $k$ (Propositions \ref{P: d-states TNP} and \ref{P: d-states CNP}).
As expected $\N\T(A,\al)$ has a rich structure of KMS states when $\be \neq 0$.
In particular we show that there is an affine homeomorphism from the simplex of the tracial states on $A$ onto the simplex of the $(\si,\be)$-KMS states on $\N\T(A,\al)$ (Theorem \ref{T: KMS TNP}).
Moreover there is an affine homeomorphism from the simplex of the tracial states (resp. states) of $A$ onto the simplex of the KMS${}_\infty$ states (resp. ground states) of $\N\T(A,\al)$ (Propositions \ref{P: ground TNP} and \ref{P: infty TNP}).
The same characterizations pass over to $\N\O(A,\al)$ (Theorem \ref{T: KMS CNP} and Corollary \ref{C: ground states CNP}).
In particular, when $\be \neq 0$ then the results read the same for tracial states on $A$ that vanish on a specified ideal $I_{\un{1}}$ of $A$.
If the system is injective then $\N\O(A,\al)$ admits only $(\si,0)$-KMS states, i.e. tracial states.
At this point we use the dilation method of \cite[Chapter 4]{DFK14} to construct tracial states on $\N\O(A,\al)$, and consequently on $\N\T(A,\al)$.

The gauge actions on $\N\T(A,\al)$ and $\N\O(A,\al)$ lift to actions of $\bR^n$, and all of our computations hold when considering
\[
\si_{(t_1, \dots, t_n)} = \ga_{(\exp(it_1), \dots, \exp(it_n))} \qfor (t_1,\dots,t_n) \in \bR^n.
\]
This suggests the idea of carrying out the analysis at a broader multivariable context.
In order to provide a full study we explore this idea in the appendix.

Defining a multivariable KMS condition may seem to be straightforward.
However what is important is the association of a KMS condition with analytic functions on an infinite domain.
To this end we introduce a multivariable KMS condition associated to tubes (multidimensional strips) and prescribing sets.
These requirements follow directly from our analysis.
In particular the additional data on the directions of $\bR^n$ is used for estimating a uniform bound when applying the Phragm\'{e}n-Lindel\"{o}f principle coordinate-wise, in order to achieve the multivariable interplay (Theorem \ref{T: KMS char}).
This context appears to be much stronger and affects entirely the multivariable analysis.
As an example we show that such multivariable KMS states do not exist for the Toeplitz-Nica-Pimsner algebra even in the simplest case of the trivial system of $\bZ_+^2$ over $\bC$; unless of course the multivariable actions follow as reductions from the one-variable case (Example \ref{E: non-example}).

\begin{acknow}
The current paper initially started as a joint work with Ken Davidson and Adam Fuller.
Following the suggestion of Ken and Adam, it was decided for the paper to go as single-authored.
I thank Ken and Adam for their suggestion and the conversations that contributed to the preparation of this paper.
I also thank Marcelo Laca for the stimulating discussions on the theory of KMS states.

This paper is dedicated to my wonderful and forbearing soul-mate; you are the joy of my life.
\end{acknow}

\section{Preliminaries}

\subsection{Notation}

We write $\bo1, \dots, \Bi, \dots, \Bn$ for the standard generators of $\bZ_+^n$, and we simply write $\un{0} := (0,\dots,0)$ and $\un 1 := (1,\dots,1)$.
We write
\[
|\un{x}|:= \sum_{i=1}^n x_i \qand \sca{\un{x}, \un{y}} := \sum_{i=1}^n x_i y_i
\]
for all $\un{x}, \un{y} \in \bR^n$.
We denote by ``$\leq$'' the partial order in $(\bR^n, +)$ induced by $\bR_+^n$, and likewise for $\bZ^n$.
We denote \emph{the support of $\un x = (x_1,\dots,x_n) \in \bZ_+^n$} by
\[
\supp \un x := \{\Bi : x_i\ne 0 \}
\]
and we write
\[
\un x^\perp := \{\un y \in \bZ_+^n : \supp \un y \cap \supp \un x = \mt \}.
\]
A \emph{grid} is a subset of $\bR^n$, or $\bZ_+^n$, that is $\vee$-closed in $\bR^n$, or $\bZ_+^n$ respectively, where
\[
\un{x} \vee \un{y} = ((\max\{x_i,y_i\})_i) \foral \un{x}, \un{y} \in \bR^n.
\]
We will be using the grids
\[
F_{m} := \{ \un{y} \in \bZ_+^n \mid \un{y} \leq m \cdot \un{1}\}.
\]

A C*-dynamical system $\al \colon \bZ_+^n \to \End(A)$ will be called \emph{unital}/ \emph{injective}/ \emph{automorphic} if $\al_{\un{x}}$ is unital/injective/automorphic for all $\un{x} \in \bZ_+^n$.

The reader is addressed to the general result \cite[Theorem 3.1.1]{Lor97} from Loring or the usual C*-folklore technique for the construction of the universal C*-algebras.
Alternatively see \cite[Chapter 2]{DFK14}.

\subsection{The Toeplitz-Nica-Pimsner algebra}

An \emph{isometric Nica covariant representation} $V$ of $\bZ_+^n$ is a semigroup homomorphism into the semigroup of isometries on some Hilbert space $H$ such that the $V_\Bi$ doubly commute, i.e. $V_\Bi V_{\bo{j}}^* = V_{\bo{j}}^* V_\Bi$ for all $\Bi \neq \bo{j}$ \cite{Nic92} (see also \cite{DFK14}).
Following the theory of product systems we can define the following universal object.

\begin{definition}
A pair $(\pi,V)$ is called \emph{isometric Nica covariant} if:
\begin{enumerate}
\item $\pi \colon A \to \B(H)$ is a $*$-representation;
\item $V \colon \bZ_+^n \to \B(H)$ is an isometric Nica covariant representation;
\item $\pi(a)V_{\Bi} = V_{\Bi} \pi\al_\Bi(a)$ for all $a\in A$ and $\Bi = \bo1, \dots, \bo{n}$.
\end{enumerate}
The universal C*-algebra $\N\T(A,\al)$ generated by $V_{\un{x}} \pi(a)$ for $\un{x} \in \bZ_+^n$ and $a\in A$ under the isometric Nica covariant pairs $(\pi,V)$ is called the \emph{Toeplitz-Nica-Pimsner} C*-algebra.
\end{definition}

It is immediate that if $(\pi,V)$ is an isometric Nica covariant pair for $\al\colon \bZ_+^n \to \End(A)$ then
\[
\ca(\pi,V) = \ol{\spn} \{ V_{\un{x}} \pi(a) V_{\un{y}}^* \mid a\in A, \un{x}, \un{y} \in \bZ_+^n\}.
\]
Furthermore if $\al$ is unital then $1 \in A$ is the unit for $\N\T(A,\al)$.
We say that $(\pi,V)$ admits a gauge action $\{\ga_{\un{z}}\}_{\un{z} \in \bT^n}$ if
\[
\ga_{\un{z}}(\pi(a)) = \pi(a) \qand \ga_{\un{z}} (V_{\un{x}}) = z_1^{x_1} \dots z_n^{x_n} V_{\un{x}}
\]
for all $\un{z} \in \bT^n$ and $a \in A$, $\un{x} \in \bZ_+^n$.
Then we let
\[
\B_{[\un{x},\un{y}]}:=\spn\{V_{\un{w}} \pi(a) V_{\un{w}}^* \mid a\in A, \un{x} \leq \un{w} \leq \un{y}\}
\]
be \emph{the cores of the gauge action on $\ca(\pi,V)$}.
We denote the core $\B_{[\un{x},\un{x}]}$ simply by $\B_{\un{x}}$.
It is not immediate but every $\B_{[\un{x},\un{y}]}$ is a C*-subalgebra of $\ca(\pi,V)$ \cite[Corollary 4.2.8]{DFK14}.
We will also use the notation
\[
\B_{[\un{0}, \infty \cdot \bo1 + \dots + \infty \cdot \bo{d}]} = \ol{\spn} \{ V_{\un{x}} \pi(a) V_{\un{x}}^* \mid a \in A, \supp \un{x} \subseteq \{\bo1, \dots, \bo{d}\}\}.
\]
Notice that $\N\T(A,\al)$ admits a gauge action due to its universal property.

A particular isometric Nica covariant pair is given by the \emph{Fock representations}.
Let $\pi \colon A \to \B(H)$ be a $*$-representation of $A$. On $K = H \otimes \ell^2(\bZ_+^n)$, define the orbit representation $\wt{\pi} \colon A \to \B(K)$ and $V \colon \bZ_+^n \to \B(K)$ by
\begin{align*}
\wt{\pi}(a) \xi \otimes e_{\un{x}} = (\pi\al_{\un{x}}(a)\xi) \otimes e_{\un{x}}
\qand
V_{\un{y}} (\xi \otimes e_{\un{x}}) = \xi \otimes e_{\un{x} + \un{y}}
\end{align*}
for all $a \in A$, $\un{x}, \un{y} \in \bZ_+^n$ and $\xi\in H$.
In \cite[Theorem 4.2.9]{DFK14} we showed that when $\pi$ is faithful then the Fock representation induced by the pair $(\wt\pi,V)$ defines a faithful representation of $\N\T(A,\al)$.
Due to the Fock representation the ambient C*-algebra $A$ embeds isometrically inside $\N\T(A,\al)$, and therefore we will simply write $a \in \N\T(A,\al)$ for all $a \in A$.

More generally we have the following gauge invariant uniqueness theorem \cite[Theorem 4.2.11]{DFK14}:
an isometric Nica covariant pair $(\pi,V)$ defines a faithful representation of $\N\T(A,\al)$ if and only if it admits a gauge action and
\[
I_{(\pi,V)}:=\{a \in A \mid \pi(a) \in \B_{(\un{0},\infty]}\} = (0),
\]
where $\B_{(\un{0},\infty]} = \ol{\spn}\{V_{\un{x}} \pi(a) V_{\un{x}}^* \mid a \in A, \un{0} \neq \un{x}\}$.

\subsection{The Cuntz-Nica-Pimsner algebra}\label{Ss: CNP}

We require a family of ideals related to $\al \colon \bZ_+^n \to \End(A)$ that is coined in \cite{DFK14}.
For each $\un x \in \bZ_+^n\setminus\{\un{0}\}$, consider the ideal $\big( \bigcap_{\Bi\in \supp \un x} \ker\al_\Bi \big)^\perp$ and let
\[
 I_{\un x} = \bigcap_{\un y \in \un x^\perp} \al_{\un y}^{-1}
 \Big(\big( \bigcap_{\Bi\in \supp \un x } \ker\al_\Bi \big)^\perp\Big).
\]
Every $I_{\un{x}}$ is an $\al_{\un{y}}$-invariant ideal for all $\un{y} \in \un{x}^\perp$.
In particular we have that $I_{\un{0}} = \{0\}$, and $I_{\un x} = I_{\un 1} = \big( \bigcap_{\Bi=\bo1}^\Bn \ker\al_\Bi \big)^\perp$ for all $\un x \ge \un 1$.

The definition of $I_{\un{x}}$ is inspired by the following observation.
If $(\pi,V)$ is an isometric Nica covariant pair such that $\pi$ is a faithful representation of $A$, then the equation
\[
\sum_{ \un{0} \leq \un w \leq \un x} V_{\un w} \pi(a_{\un w}) V_{\un w}^* =0
\]
implies that $a_{\un{0}} \in I_{\un x}$, see \cite[Proposition 4.3.2]{DFK14}.

One has the freedom to define a family of universal objects determined by a system of generators and relations.
The interesting part in any case is to prove that the object exists and it is not trivial.
For $\al \colon \bZ_+^n \to \End(A)$ we give the following definition.

\begin{definition}\label{D: CN}
\cite[Definition 4.3.16]{DFK14}
An isometric Nica covariant pair $(\pi,U)$ is called \emph{Cuntz-Nica covariant} if
\[
\pi(a) \cdot \prod_{\Bi \in \supp \un x} (I - U_\Bi U_\Bi^*) = 0, \foral a \in I_{\un x}.
\]
The universal C*-algebra $\N\O(A,\al)$ generated by $U_{\un{x}} \pi(a)$ for $\un{x} \in \bZ_+^n$ and $a\in A$ under the Cuntz-Nica covariant pairs $(\pi,U)$ is called the \emph{Cuntz-Nica-Pimsner} C*-algebra.
\end{definition}

Evidently if $\al$ is unital then $1 \in A$ is the unit for $\N\O(A,\al)$.
A key ingredient that we used in \cite{DFK14} for proving the existence of $\N\O(A,\al)$ is a tail adding technique.
With this process we pass from a possibly non-injective system $\al \colon \bZ_+^n \to \End(A)$ to an automorphic system $\wt{\be} \colon \bZ^n \to \Aut(\wt{B})$.
Let us include here a description of this dilation.

If $\al \colon \bZ_+^n \to \End(A)$ is injective then it admits a minimal automorphic extension $\wt{\al} \colon \bZ^n \to \Aut(\wt{A})$.
Here $\wt{A}$ is the direct limit C*-algebra associated with the connecting $*$-homomorphisms $\al_{\un{x}} \colon A_{\un{y}} \to A_{\un{x} + \un{y}}$ for $A_{\un{y}} :=A$ and $\un{x}, \un{y} \in \bZ_+^n$.
The $*$-automorphism $\wt{\al}_{\un{w}}$ is defined by the diagram
\[
\xymatrix{
A_{\un{y}} \ar[r]^{\al_{\un{x}}} \ar[d]^{\al_{\un{w}}} & A_{\un{x} + \un{y}} \ar[d]^{\al_{\un{w}}} \ar[r] &\wt{A} \ar[d]^{\wt\al_{\un{w}}} \\
A_{\un{y}} \ar[r]^{\al_{\un{x}}} & A_{\un{x} + \un{y}}  \ar[r] &\wt{A}
}
\]
for every $\un{w} \in \bZ_+^n$.

Now suppose that $\al \colon \bZ_+^n \to \End(A)$ is not injective.
First suppose that $\al$ is unital.
Define $B_{\un x} := A/I_{\un x}$ for $\un x \in \bZ_+^n$ and let $q_{\un x}$ be the quotient map of $A$ onto $B_{\un x}$.
Set $B = \sum_{\un x \in \bZ_+^n}^{\oplus} B_{\un x}$.
A typical element of $B$ is denoted by
\[
b = \sum_{\un x\in\bZ_+^n} q_{\un x}(a_{\un x}) \otimes e_{\un x} ,
\]
where $a_{\un x} \in A$.
Observe that $I_{\un x}$ is invariant under $\al_\Bi$ when $x_i=0$, thus we can define the action of $\bZ_+^n$ on $B$ by
\begin{align*}
 \be_{\Bi}(q_{\un x}(a) \otimes e_{\un x} ) =
 \begin{cases}
 q_{\un x}\al_\Bi(a) \otimes e_{\un x} + q_{\un x + \Bi}(a) \otimes e_{\un x + \Bi} & \text{ if } x_i=0,\\
 q_{\un x}(a) \otimes e_{\un x + \Bi} & \text{ for } x_i\ge 1.
 \end{cases}
\end{align*}
It is clear that the compression of $\be \colon \bZ_+^n \to \End(B)$ to the $\un{0}$-th co-ordinate $A$ is the system $\al\colon \bZ_+^n \to \End(A)$.
So $\be \colon \bZ_+^n \to \End(B)$ is a dilation of $\al\colon \bZ_+^n \to \End(A)$.
Furthermore $\be \colon \bZ_+^n \to \End(B)$ is injective and it admits the minimal automorphic extension $\wt{\be} \colon \bZ^n \to \Aut(\wt{B})$.

If there is at least one $\al_\Bi$ that is not unital then form \emph{the unitization $\al^{(1)} \colon \bZ_+^n \to \End(A^{(1)})$ of the system} where $A^{(1)} = A + \bC$.
We consider $A^{(1)}$ even when $A$ has a unit, but we impose $\al^{(1)} = \al$ when the system is unital.
Let $\be^{(1)} \colon \bZ_+^n \to \End(B^{(1)})$ be the dilation of $\al^{(1)} \colon \bZ_+^n \to \End(A^{(1)})$. This is \emph{not} the unitization of $\be \colon \bZ_+^n \to \End(B)$ but the ideals $I_{\un{x}}$ turn out to be the same, i.e.
\[
B^{(1)} = \sum^{\oplus}_{\un{x} \in \bZ_+^n} B_{\un{x}}^{(1)} \; \text{ for } \; B_{\un{x}}^{(1)} = A^{(1)}/ I_{\un{x}}.
\]
Further, let $\wt{\be^{(1)}} \colon \bZ_+^n \to \End(\wt{B^{(1)}})$ be the automorphic extension.
Since $\be^{(1)} \colon \bZ_+^n \to \End(B^{(1)})$ extends $\be \colon \bZ_+^n \to \End(B)$ it follows by the gauge invariant uniqueness theorem for crossed products that $\wt{B} \rtimes_{\wt{\be}} \bZ^n$ embeds canonically and isometrically inside $\wt{B^{(1)}} \rtimes_{\wt{\be^{(1)}}} \bZ^n$.
Let $\pi \colon \wt{B^{(1)}} \to \B(H)$ be a faithful representation and let $(\wh{\pi},U)$ be the left regular pair that defines a faithful representation of $\wt{B^{(1)}} \rtimes_{\wt{\be^{(1)}}} \bZ^n$ on $H \otimes \ell^2(\bZ^n)$.
Let $1$ be the identity of the unitization $A^{(1)}$ and let $p = \wh{\pi}(1 \otimes e_{\un{0}})$.
Then the pair $(\wh{\pi}|_A, Up)$ defines a faithful representation for $\N\O(A,\al)$.
Then \cite[Theorem 4.3.7]{DFK14} and \cite[Corollary 4.3.18]{DFK14} imply that $\N\O(A,\al)$ is a full corner of $\widetilde{B}\rtimes_{\widetilde{\beta}}\bZ^n$, by the projection $p$ above).
If in particular $\al$ is injective then $\N\O(A,\al) \simeq \wt{A} \rtimes_{\wt{\al}} \bZ^n$.

The crucial remark is that once more the ambient C*-algebra $A$ embeds isometrically in $\N\O(A,\al)$.
Therefore we will simply write $a \in \N\O(A,\al)$ for all $a \in A$.
Recall that $\N\O(A,\al)$ admits a gauge action due to its universal property.
Then the gauge invariant uniqueness theorem (as in \cite[Theorem 4.3.17]{DFK14}) asserts that an isometric Cuntz-Nica covariant pair $(\pi,U)$ of $\N\O(A,\al)$ defines a faithful representation if and only if it admits a gauge action and $\pi$ is faithful.

There is a strong connection with product systems.
Cuntz-Nica-Pimsner algebras of such objects have been studied in the influential papers of Fowler \cite{Fow02}, Sims and Yeend \cite{SimYee10}, and Carlsen, Larsen, Sims and Vittadello \cite{CLSV11}.
From the analysis in \cite{DFK14} it follows that $\N\O(A,\al)$ falls inside this framework.
Among many others, the key idea in \cite{CLSV11} is to compare Cuntz-Nica-Pimsner algebras with co-universal C*-algebras subject to a gauge invariant uniqueness theorem.
However, due to the vast complexity of product systems, the defining CNP-representations in the sense of \cite[p. 568]{CLSV11} involve checking a series of properties.
A more flexible algebraic characterization, analogous to the one offered by Katsura \cite{Kat04} for C*-correspondences, is not clear at this point (and perhaps this is a price that needs to be payed).

The connection of \cite{DFK14} with \cite{CLSV11} is obtained via Dilation Theory.
In \cite[Theorem 4.3.17]{DFK14} we directly prove the gauge invariant uniqueness theorem for $\N\O(A,\al)$.
Then the combination of \cite[Theorem 4.3.17]{DFK14} with \cite[Corollary 4.14]{CLSV11} implies that the CNP-representations of \cite{CLSV11} admit the characterization of Definition \ref{D: CN} in our context.
The additional gain is that the analysis in \cite{DFK14} is carried within Arveson's Program on the C*-envelope and thus answers also a question raised in \cite[Introduction]{CLSV11}.
It seems that progress in the same spirit can help clarify further the algebraic nature of the CNP-representations for other cases of product systems.
Of course a unified approach may be set as a ultimate goal.

We conclude with a remark that follows from scattered arguments in \cite[Section 4.3 and Section 4.4]{DFK14}.

\begin{proposition}\label{P: injectivity}
Let $\al \colon \bZ_+^n \to \End(A)$ be a C*-dynamical system and let $\N\O(A,\al) \simeq \ca(\pi, U)$.
Then the following are equivalent:
\begin{enumerate}[\upshape(i)]
\item the system is injective;
\item $I_{\Bi} = A$ for all $\Bi = \bo1, \dots, \bo{n}$;
\item the $U_\Bi$ can be assumed to be unitaries for $\Bi = \bo1, \dots, \bo{n}$.
\end{enumerate}
\end{proposition}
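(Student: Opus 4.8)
The plan is to establish the two equivalences (i)~$\Leftrightarrow$~(ii) and (i)~$\Leftrightarrow$~(iii) separately, the first by unwinding the definition of the ideals $I_\Bi$ and the second by playing the covariance relations against the concrete crossed product model. Since $\bZ_+^n$ is abelian we have $\al_{\un x} = \al_{\bo1}^{x_1}\cdots\al_{\bo n}^{x_n}$, so injectivity of every $\al_\Bi$ is equivalent to injectivity of the whole system; this is what lets us test injectivity on the generators throughout.

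For (i)~$\Leftrightarrow$~(ii) I would first specialise the formula for $I_{\un x}$ to a single generator $\Bi$. Since $\supp \Bi = \{\Bi\}$ we have $\bigcap_{\bo j \in \supp \Bi}\ker\al_{\bo j} = \ker\al_\Bi$, and since $\un 0 \in \Bi^\perp$ with $\al_{\un 0} = \id_A$, the term indexed by $\un y = \un 0$ shows $I_\Bi \subseteq (\ker\al_\Bi)^\perp$. If the system is injective then $\ker\al_\Bi = (0)$, so $(\ker\al_\Bi)^\perp = A$ and every term $\al_{\un y}^{-1}\big((\ker\al_\Bi)^\perp\big)$ equals $A$, whence $I_\Bi = A$. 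Conversely $I_\Bi = A$ forces $(\ker\al_\Bi)^\perp = A$, and since an ideal of a C*-algebra meets its annihilator trivially this gives $\ker\al_\Bi = \ker\al_\Bi \cap (\ker\al_\Bi)^\perp = (0)$.

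For (iii)~$\Rightarrow$~(i), suppose $\N\O(A,\al) \simeq \ca(\pi,U)$ with each $U_\Bi$ unitary. The embedding of $A$ being isometric, $\pi|_A$ is faithful. The isometric Nica covariance relation $\pi(a)U_\Bi = U_\Bi\pi\al_\Bi(a)$ gives $U_\Bi^*\pi(a)U_\Bi = \pi\al_\Bi(a)$, so if $a \in \ker\al_\Bi$ then $U_\Bi^*\pi(a)U_\Bi = 0$; conjugating back by the unitary $U_\Bi$ yields $\pi(a) = 0$ and hence $a = 0$. Thus each $\al_\Bi$, and therefore the system, is injective.

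For (i)~$\Rightarrow$~(iii) I would invoke the concrete model recorded above: when $\al$ is injective, $\N\O(A,\al) \simeq \wt A \rtimes_{\wt\al}\bZ^n$, and under this isomorphism the generators $U_\Bi$ are carried to the unitaries of $\bZ^n \supseteq \bZ_+^n$ implementing $\wt\al$. Concretely I would take a faithful representation of $\wt A$, pass to the regular representation of the crossed product, and restrict to obtain a faithful pair $(\pi, U)$ in which the $U_\Bi$ are unitaries; the Cuntz-Nica covariance of Definition~\ref{D: CN} then holds automatically, because injectivity makes $I_{\un x} = A$ for $\un x \neq \un 0$ while $I - U_\Bi U_\Bi^* = 0$ kills every factor of the defining product. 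I expect the only real care to be in checking that this crossed-product pair does satisfy Definition~\ref{D: CN} and defines a faithful representation via the gauge invariant uniqueness theorem, but both are already packaged in the cited results of \cite{DFK14}, so the genuinely new content reduces to the elementary ideal computation of (i)~$\Leftrightarrow$~(ii) together with the short conjugation argument for (iii)~$\Rightarrow$~(i).
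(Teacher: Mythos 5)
Your proposal is correct and follows essentially the same route as the paper: the same unwinding of the definition of $I_\Bi$ for (i) $\Leftrightarrow$ (ii), the same covariance-plus-unitarity argument for (iii) $\Rightarrow$ (i), and the same appeal to $\N\O(A,\al) \simeq \wt{A} \rtimes_{\wt{\al}} \bZ^n$ from \cite{DFK14} for (i) $\Rightarrow$ (iii). The extra details you supply (testing injectivity on the generators, the ideal-annihilator intersection being trivial, and the automatic Cuntz-Nica covariance when the $U_\Bi$ are unitaries) are just explicit versions of steps the paper leaves implicit.
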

\begin{proof}
If item (i) holds then $\ker\al_\Bi^\perp = A$ for all $\Bi = \bo{1}, \dots, \bo{n}$, hence $I_{\Bi} = A$ for all $\Bi = \bo{1}, \dots, \bo{n}$.
Consequently item (ii) holds.
Moreover by \cite[Theorem 4.3.7 and Corollary 4.3.18]{DFK14} we get that $\N\O(A,\al) \simeq \wt{A} \rtimes_{\wt{\al}} \bZ^n$ so that the $U_\Bi$ can be assumed to be unitaries.

If item (ii) holds then $\al_\Bi$ is injective, since $I_{\Bi} \subseteq \ker\al_\Bi^\perp$ for all $\Bi = \bo{1}, \dots, \bo{n}$.
Consequently item (i) holds.

Now assume that item (iii) holds and let $a \in \ker\al_\Bi$.
Then the covariant relation implies that $a U_{\Bi} = U_{\Bi} \al_\Bi(a) =0$.
However $U_\Bi$ is assumed unitary, thus $a=0$ and item (i) follows.
\end{proof}

\subsection{Exactness and Nuclearity}

The reader is addressed to \cite{BroOza08} for full details on the subject.
For the convenience of the reader and for future reference, let us state here the results that we will use.

\begin{lemma}\label{L: nuc/ex}
\begin{inparaenum}[\upshape(i)]
\item \cite[Proposition A.6]{Kat04} Suppose that the following diagram with exact rows is commutative
\begin{align*}
\xymatrix@C=4em{
0 \ar[r] & I \ar[r]^{\iota} \ar[d]^{\phi_0} & A \ar[r]^{\pi} \ar[d]^{\phi} & B \ar[r] \ar@{=}[d] & 0 \\
0 \ar[r] & I' \ar[r]^{\iota'} & A' \ar[r]^{\pi'} & B \ar[r] & 0
}
\end{align*}
and $\phi$ is injective.
Then $\phi$ is nuclear if and only if both $B$ and $\phi_0$ are nuclear.\\
\item \cite[Proposition 2]{DLRZ02}, \cite[Proposition A.13]{Kat04} Let $\ga \colon G \curvearrowright A$ be an action of a compact group $G$ on a C*-algebra $A$.
    Then $A$ is exact (resp. nuclear) if and only if the fixed point algebra $A^\ga$ is exact (resp. nuclear).
\end{inparaenum}
\end{lemma}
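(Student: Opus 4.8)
The plan is to regard both parts as standard facts about nuclearity and exactness of C*-algebras and completely positive maps, recorded here for later use; I would supply the citations \cite[Proposition A.6]{Kat04}, \cite[Proposition 2]{DLRZ02}, \cite[Proposition A.13]{Kat04} and sketch the underlying arguments. In each part the two implications are of very different character: one direction is formal, and the content is concentrated in the converse.

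For the forward implication of (i), commutativity of the right-hand square gives $\pi = \pi' \circ \phi$. Since post-composing a nuclear map with a $*$-homomorphism keeps it nuclear, $\pi$ is a nuclear map, and a standard argument identifies the range of a surjective nuclear $*$-homomorphism as a nuclear C*-algebra, yielding nuclearity of $B$. For $\phi_0$, one writes $\phi_0 = \phi \circ \iota$ with $\iota \colon I \to A$ the inclusion, so $\phi_0$ is nuclear as a map into $A'$; corestricting its approximating matrix factorizations into the ideal $I'$ by means of an approximate unit of $I'$ shows that $\phi_0 \colon I \to I'$ is itself nuclear.

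The converse of (i) is the main obstacle. Given completely positive approximate factorizations witnessing nuclearity of $B$ and of $\phi_0$, I would assemble one for $\phi$ using a quasi-central approximate unit $(u_\la)$ of $I$ in $A$. Splitting, heuristically, $\phi(a) \approx \phi(u_\la^{1/2} a\, u_\la^{1/2}) + \phi((1-u_\la)^{1/2} a (1-u_\la)^{1/2})$, the first summand is handled by the factorization of $\phi_0$ on the ideal, while the second is handled by lifting the factorization of $B$ along a completely positive splitting of $\pi'$ supplied by Arveson's extension theorem. The injectivity of $\phi$ ensures the two pieces glue without cancellation and that the assembled maps converge to $\phi$ point-norm; controlling the cross terms coming from $(u_\la)$ is the delicate step and is exactly the content of \cite[Proposition A.6]{Kat04}.

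For (ii) the forward direction is soft. Exactness passes to arbitrary C*-subalgebras, so exactness of $A$ gives exactness of $A^\ga$ immediately; nuclearity passes because $A^\ga$ is the range of the conditional expectation $E(a) = \int_G \ga_g(a)\, dg$, and composing a completely positive approximation of $\id_A$ with $E$ and the inclusion produces one for $\id_{A^\ga}$. The converse $A^\ga \Rightarrow A$ is where the hypotheses on $G$ are essential: compactness makes $G$ amenable, so exactness and nuclearity of $A$ are equivalent to those of the crossed product $A \rtimes_\ga G$, and the latter is analysed through its relation to $A^\ga$ via the Hilbert $A^\ga$-module structure on $A$ and the spectral-subspace decomposition. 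This transfer is precisely \cite[Proposition 2]{DLRZ02} and \cite[Proposition A.13]{Kat04}, which I would invoke directly; reproving this converse, rather than the easy forward half, is where the real work lies.
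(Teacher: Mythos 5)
The paper gives no proof of this lemma at all: it is stated purely as a recollection of known results with exactly the citations you provide, so your approach of invoking \cite[Proposition A.6]{Kat04}, \cite[Proposition 2]{DLRZ02} and \cite[Proposition A.13]{Kat04} and deferring the substantive implications to them is essentially identical to the paper's. Your supplementary sketches are harmless since the delicate steps are explicitly anchored to those citations, though note that a completely positive splitting of the quotient map $\pi'$ would come from the Choi--Effros lifting theorem (using nuclearity of $B$ and separability), not from Arveson's extension theorem.
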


As an immediate consequence of the arguments of Katsura \cite{Kat04} we obtain the following lemma.

\begin{lemma}\label{L: dl nuclear}
Let the C*-algebras $B_{[n,n]} \subseteq \B(H)$ for $n \in \bZ_+$, and set $B_{[0,0]} = A$.
Define $B_{[0,n]} = \sum_{k=0}^n B_{[k,k]}$ and suppose that $B_{[1,n]} \lhd B_{[0,n]}$ (hence the $B_{[0,n]}$ are C*-algebras).
Moreover let the inductive limit C*-algebra $B_{[0,\infty]} = \ol{\cup_{n \geq 0} B_{[0,n]}}$ and its ideal $B_{(0,\infty]} = \ol{\cup_{n \geq 1} B_{[1,n]}}$.
If the embeddings
\[
A \hookrightarrow B_{[0,\infty]} \qand B_{[1,n]} \hookrightarrow B_{(0,\infty]}
\]
are nuclear for all $n \in \bZ_+$, and if there is an ideal $I \lhd A$ such that
\[
A/ I \simeq B_{[0,n]}/ B_{[1,n]} \simeq B_{[0,\infty]}/ B_{(0,\infty]}
\]
for all $n \in \bZ_+$, then $B_{[0,\infty]}$ is nuclear.
\end{lemma}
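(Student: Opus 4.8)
The plan is to realise the identity map on $B_{[0,\infty]}$ as a nuclear map by feeding the short exact sequence
\[
0 \longrightarrow B_{(0,\infty]} \longrightarrow B_{[0,\infty]} \longrightarrow B_{[0,\infty]}/B_{(0,\infty]} \longrightarrow 0
\]
into Lemma \ref{L: nuc/ex}(i), comparing this extension with itself along $\phi = \id_{B_{[0,\infty]}}$, which is trivially injective. That lemma then says that $\id_{B_{[0,\infty]}}$ is nuclear if and only if both the quotient $B_{[0,\infty]}/B_{(0,\infty]} \simeq A/I$ and the restriction $\id_{B_{(0,\infty]}}$ are nuclear; since a C*-algebra is nuclear exactly when its identity map is, this reduces the whole statement to two separate nuclearity claims: that $A/I$ is nuclear and that $B_{(0,\infty]}$ is nuclear.

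For the quotient, I would invoke the hypothesis that the embedding $A \hookrightarrow B_{[0,\infty]}$ is nuclear. The isomorphisms $A/I \simeq B_{[0,n]}/B_{[1,n]} \simeq B_{[0,\infty]}/B_{(0,\infty]}$ are the ones induced by the inclusions (indeed $B_{[0,n]} = A + B_{[1,n]}$ makes the natural map onto the quotient surjective, and the isomorphism identifies its kernel with $I$), so the embedding $j \colon A \hookrightarrow B_{[0,\infty]}$ carries $I$ into $B_{(0,\infty]}$ and descends to the given isomorphism on quotients. This produces a commuting ladder of extensions
\[
\xymatrix@C=3em{
0 \ar[r] & I \ar[r] \ar[d] & A \ar[r] \ar[d]^{j} & A/I \ar[r] \ar@{=}[d] & 0 \\
0 \ar[r] & B_{(0,\infty]} \ar[r] & B_{[0,\infty]} \ar[r] & B_{[0,\infty]}/B_{(0,\infty]} \ar[r] & 0
}
\]
with $j$ injective and nuclear. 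Applying Lemma \ref{L: nuc/ex}(i) to this ladder yields that $A/I$ is nuclear (as is, incidentally, the restriction $I \to B_{(0,\infty]}$), which settles the first claim.

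For $B_{(0,\infty]}$ I would use its description as the inductive limit $\ol{\cup_{n\geq 1} B_{[1,n]}}$ together with the hypothesised nuclearity of the embeddings $B_{[1,n]} \hookrightarrow B_{(0,\infty]}$. Given a finite set $F \subseteq B_{(0,\infty]}$ and $\eps>0$, one picks $n$ large enough that each element of $F$ is within $\eps/3$ of $B_{[1,n]}$; nuclearity of $B_{[1,n]} \hookrightarrow B_{(0,\infty]}$ then supplies contractive completely positive maps $B_{[1,n]} \to M_k \to B_{(0,\infty]}$ reproducing these approximants up to $\eps/3$. Extending the first of these maps from $B_{[1,n]}$ to all of $B_{(0,\infty]}$ by Arveson's extension theorem (see \cite{BroOza08}) gives a contractive completely positive factorisation of $\id_{B_{(0,\infty]}}$ through $M_k$ agreeing with the identity up to $\eps$ on $F$. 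This is the completely positive approximation property, so $B_{(0,\infty]}$ is nuclear.

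Combining the two reductions, both $B_{[0,\infty]}/B_{(0,\infty]}$ and $B_{(0,\infty]}$ are nuclear, whence the first paragraph forces $\id_{B_{[0,\infty]}}$ to be nuclear, i.e.\ $B_{[0,\infty]}$ is nuclear. The only genuinely technical step is the inductive-limit argument: nuclearity of the \emph{embeddings} $B_{[1,n]} \hookrightarrow B_{(0,\infty]}$ does not by itself factor $\id_{B_{(0,\infty]}}$ through matrices, and it is precisely the Arveson extension that bridges this gap; the remaining manipulations are direct applications of Lemma \ref{L: nuc/ex}(i).
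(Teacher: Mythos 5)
Your proof is correct, but it organises the reduction differently from the paper, and the comparison is worth recording. Both arguments start identically: the ladder comparing $0 \to I \to A \to A/I \to 0$ with $0 \to B_{(0,\infty]} \to B_{[0,\infty]} \to B_{[0,\infty]}/B_{(0,\infty]} \to 0$, together with injectivity and nuclearity of $A \hookrightarrow B_{[0,\infty]}$, gives nuclearity of $A/I$ via the ``only if'' direction of Lemma \ref{L: nuc/ex}(i). After that the routes diverge. The paper applies Lemma \ref{L: nuc/ex}(i) a second time, now to the ladder with top row $0 \to B_{[1,n]} \to B_{[0,n]} \to B_{[0,n]}/B_{[1,n]} \to 0$ and the same bottom row, in the ``if'' direction: the hypothesised nuclearity of $B_{[1,n]} \hookrightarrow B_{(0,\infty]}$ plus nuclearity of the common quotient yields that each embedding $B_{[0,n]} \hookrightarrow B_{[0,\infty]}$ is nuclear, and nuclearity of $B_{[0,\infty]}$ is then read off from the inductive limit structure $B_{[0,\infty]} = \ol{\cup_{n} B_{[0,n]}}$ --- a final step the paper leaves entirely implicit, and which requires precisely the Arveson-extension argument you spell out. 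You instead apply the lemma to the identity ladder of $B_{[0,\infty]}$, which reduces the whole statement to nuclearity of the ideal $B_{(0,\infty]}$ as a C*-algebra, and you prove that by running the inductive-limit/Arveson argument on $B_{(0,\infty]} = \ol{\cup_{n} B_{[1,n]}}$. The trade-off: the paper's route stays at the level of nuclear \emph{maps} (only the hypothesised embeddings are ever invoked in the second diagram), whereas yours proves the strictly stronger intermediate fact that the ideal $B_{(0,\infty]}$ is itself a nuclear algebra; in compensation, your write-up makes explicit the completely positive approximation/extension step that the paper's proof silently relies on in passing from nuclearity of the embeddings $B_{[0,n]} \hookrightarrow B_{[0,\infty]}$ to nuclearity of $B_{[0,\infty]}$, so in total your version is the more self-contained of the two.
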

\begin{proof}
For $n=0$ we obtain the following commutative diagram
\[
\xymatrix@C=3em{
0 \ar[r] & I \ar[r] \ar[d] & A \ar[r] \ar[d] & A/I \ar[r] \ar@{=}[d] & 0 \\
0 \ar[r] & B_{(0,\infty]} \ar[r] & B_{[0,\infty]} \ar[r] & B_{[0,\infty]}/ B_{(0,\infty]} \ar[r] & 0
}
\]
which by \cite[Proposition A.6]{Kat04} asserts that $A/I \simeq B_{[0,\infty]}/ B_{(0,\infty]}$ is nuclear.
Applying \cite[Proposition A.6]{Kat04} again to the following commutative diagram
\[
\xymatrix@C=3em{
0 \ar[r] & B_{[1,n]} \ar[r] \ar[d] & B_{[0,n]} \ar[r] \ar[d] & B_{[0,n]}/B_{[1,n]} \ar[r] \ar@{=}[d] & 0 \\
0 \ar[r] & B_{(0,\infty]} \ar[r] & B_{[0,\infty]} \ar[r] & B_{[0,\infty]}/ B_{(0,\infty]} \ar[r] & 0
}
\]
shows that the embedding of $B_{[0,n]} \hookrightarrow B_{[0,\infty]}$ is nuclear.
\end{proof}

\subsection{Kubo-Martin-Schwinger states}

Let us recall some elements from the theory of KMS states.
The reader is addressed to  \cite{BraRob87,BraRob97} for full details.
Let $\si \colon \bR \to \Aut(\A)$ be an action on a C*-algebra $\A$.
For $a \in \A$ define the sequence
\[
a_m = \sqrt{\frac{m}{\pi}} \int_\bR \si_t(a) e^{-mt^2} dt.
\]
This sequence converges to $a$ and the function $t \mapsto \si_t(a_m)$ extends to the entire analytic function
\[
z \mapsto f_{m}(z) = \sqrt{\frac{m}{\pi}} \int_\bR \si_t(a) e^{-m(t-z)^2} dt.
\]
The elements $a_m$ for all $a \in \A$ then form the dense $*$-subalgebra $\A_{\textup{an}}$ of analytic elements in $\A$ \cite[Proposition 2.5.22]{BraRob87}.
Moreover $\A_{\textup{an}}$ is $\si$-invariant.
A state $\psi$ of $\A$ is called a \emph{$(\si,\be)$-KMS state} if it satisfies
\[
\psi(a b) = \psi(b \si_{i\be}(a)),
\]
for all $a,b$ in a norm-dense $\si$-invariant $*$-subalgebra of $\A_{\text{an}}$.
For $\be=0$ this amounts to tracial states.
When $\si$ is trivial, then $\A_{\textup{an}} = \A$ and the KMS property is again equivalent to $\tau$ being a tracial state.

When $\be \neq 0$ then the KMS states give rise to particular continuous functions.
More precisely for $\be>0$ let
\[
D = \{ z \in \bC \mid 0 < \im(z) < \be\}.
\]
Then a state $\psi$ is a $(\si,\be)$-KMS state if and only if for any pair $a,b \in \A$ there exists a complex function $F_{a,b}$ that is analytic on $D$ and continuous (hence bounded) on $\ol{D}$ such that
\[
F_{a,b}(t) = \psi(a \si_t(b)) \qand F_{a,b}(t + i \be) = \psi(\si_t(b)a),
\]
for all $t\in \bR$ \cite[Proposition 5.3.7]{BraRob97}.
A similar result is obtained when $\be<0$ for $D = \{ z \in \bC \mid \be < \im(z) < 0\}$.

Following \cite{LacRae10} we adopt the distinction between ground states and KMS$_\infty$ states.
A state $\psi$ of a C*-algebra $\A$ is called a \emph{ground state} if the function $z \mapsto \psi(a \si_{z}(b))$ is bounded on $\{z \in \bC \mid \text{Im}z >0\}$ for all $a,b$ inside a dense analytic subset $\A_{\text{an}}$ of $\A$.
A state $\psi$ of $\N\T(A,\al)$ is called a \emph{KMS$_\infty$ state} if it is the w*-limit of $(\si,\be)$-KMS states as $\be \longrightarrow \infty$.

When there is a multivariable action $\si \colon \bR^n \to \Aut(\A)$ one may wish to examine KMS states along an induced action $\si F \colon \bR \to \Aut(\A)$ for a continuous $F \colon \bR \to \bR^n$.
Since the action must be by automorphisms then necessarily $F(t) = (\la_1 t, \dots, \la_n t)$ for some $\la_1, \dots, \la_n \in \bR$.

\section{Exactness and Nuclearity}

\subsection{The Toeplitz-Nica-Pimsner algebra}

We begin by giving necessary and sufficient conditions for $\N\T(A,\al)$ to be exact.

\begin{theorem}\label{T: exact TNP}
Let $\al \colon \bZ_+^n \to \End(A)$ be a C*-dynamical system.
Then the following are equivalent:
\begin{enumerate}[\upshape(i)]
\item $A$ is exact;
\item the fixed point algebra $\N\T(A,\al)^\ga$ is exact;
\item $\N\T(A,\al)$ is exact.
\end{enumerate}
\end{theorem}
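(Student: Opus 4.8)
The plan is to route everything through the fixed point algebra, exploiting that exactness is far more hereditary than nuclearity. I would first settle the equivalence (ii) $\iff$ (iii): since the gauge action $\{\ga_{\un{z}}\}_{\un{z}\in\bT^n}$ is an action of the compact group $\bT^n$ on $\N\T(A,\al)$, Lemma \ref{L: nuc/ex}(ii) gives at once that $\N\T(A,\al)$ is exact if and only if its fixed point algebra $\N\T(A,\al)^\ga$ is exact. Next, (iii) $\implies$ (i) is immediate: the excerpt records that $A$ embeds isometrically as a C*-subalgebra of $\N\T(A,\al)$ through the Fock representation, and a C*-subalgebra of an exact C*-algebra is again exact. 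Thus the whole content is the implication (i) $\implies$ (ii).

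For (i) $\implies$ (ii) I would first identify the fixed point algebra explicitly. Since $\ga_{\un{z}}(V_{\un{x}}\pi(a)V_{\un{y}}^*)=\un{z}^{\,\un{x}-\un{y}}\,V_{\un{x}}\pi(a)V_{\un{y}}^*$, only the diagonal terms $\un{x}=\un{y}$ survive averaging over $\bT^n$, so
\[
\N\T(A,\al)^\ga = \ol{\spn}\{V_{\un{x}}\pi(a)V_{\un{x}}^* \mid a \in A, \un{x} \in \bZ_+^n\}.
\]
This is the increasing union of the cores $\B_{F_m}=\B_{[\un{0},\,m\cdot\un{1}]}$, each a C*-subalgebra by \cite[Corollary 4.2.8]{DFK14}, whence $\N\T(A,\al)^\ga=\ol{\bigcup_m \B_{F_m}}$ is their inductive limit. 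Because exactness is preserved under inductive limits, it suffices to prove that every core $\B_{F_m}$ is exact.

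To handle a single core I would set up a finite filtration by ideals whose successive subquotients are exact whenever $A$ is. Concretely, inside $\B_{[\un{0},\,m\cdot\un{1}]}$ one has the ideal $\B_{(\un{0},\,m\cdot\un{1}]}=\ol{\spn}\{V_{\un{w}}\pi(a)V_{\un{w}}^* \mid \un{0}\ne\un{w}\le m\cdot\un{1}\}$ (the covariance $\pi(a)V_{\un{w}}=V_{\un{w}}\pi\al_{\un{w}}(a)$ shows it absorbs $B_{\un{0}}$ on both sides), whose quotient is a quotient of $A$ and hence exact; the ideal itself is assembled from the shifted cores $V_{\Bi}\,\B_{[\un{0},\,m\cdot\un{1}-\Bi]}\,V_\Bi^*$, which the doubly-commuting relations organize, via Nica covariance, into an extension built from images of smaller cores and from compact operators on Fock correspondences over $A$. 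Since each such building block is exact when $A$ is, and exactness is closed under ideals, quotients and extensions, a double induction---on the level $m$ and on the number of active coordinates---yields exactness of $\B_{F_m}$, completing (i) $\implies$ (ii).

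The main obstacle is precisely this last step: unlike the single-variable situation, where the cores filter linearly and the subquotients are transparently compacts of a Fock module, the lattice $\bZ_+^n$ forces an inclusion--exclusion among the shifted ideals $V_{\Bi}\,\B_{[\un{0},\,\cdot\,]}\,V_\Bi^*$, and one must check that the resulting subquotients really are exact-from-$A$ building blocks. The saving grace is that exactness, in contrast to nuclearity, passes to arbitrary C*-subalgebras; so I need only exhibit each core as an iterated extension of exact pieces, without tracking the finer quotient data that Lemma \ref{L: dl nuclear} requires in the nuclear case.
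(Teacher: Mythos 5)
Your outer reductions match the paper's: (ii) $\Leftrightarrow$ (iii) is Lemma \ref{L: nuc/ex}(ii); (iii) $\Rightarrow$ (i) follows since $A$ sits as a C*-subalgebra of $\N\T(A,\al)$; and identifying $\N\T(A,\al)^\ga$ with the inductive limit of the cores $\B_{[\un{0}, m \cdot \un{1}]}$ (exactness being preserved by inductive limits) correctly reduces (i) $\Rightarrow$ (ii) to exactness of each core. The gap is in how you conclude exactness of a core. You invoke the principle that ``exactness is closed under ideals, quotients and extensions.'' The first two are true (closure under quotients being Kirchberg's deep theorem), but the third is \emph{false}: by a well-known result of Kirchberg, an extension of an exact C*-algebra by an exact C*-algebra need not be exact. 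You in fact have the permanence properties backwards on this point: it is nuclearity that passes to extensions but not to subalgebras, and exactness that passes to subalgebras but not to extensions. Exactness does pass through an extension $0 \to \J \to E \to E/\J \to 0$ when the extension is split by a $*$-homomorphism (or locally split), but not in general, so ``exhibiting each core as an iterated extension of exact pieces'' does not suffice.

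This is precisely why the paper's proof is organized the way it is. At every inductive step (first along the $\bo1$-direction, then adding one row at a time in each new direction $\Bi$), the larger core is written as $\B' + \J$ with $\J$ an ideal and $\B'$ a C*-subalgebra, and then it is \emph{proved} that $\B' \cap \J = (0)$ by evaluating against Fock-space vectors: for instance $A \cap \B_{\bo1} = (0)$ because $\B_{\bo1} \subseteq \B_{(\un{0},\infty]}$ and $A \cap \B_{(\un{0},\infty]} = (0)$, while $\B_{[\un{0}, l \cdot \bo1]} \cap \B_{(l+1)\cdot\bo1} = (0)$ is checked by testing against the vectors $\xi \otimes e_{m\cdot\bo1}$ for $0 \le m \le l$. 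The vanishing intersection makes the quotient of the larger core by $\J$ isomorphic to $\B'$, with the inclusion $\B' \hookrightarrow \B' + \J$ a $*$-homomorphic splitting; split extensions of exact C*-algebras by exact C*-algebras are exact, and that is the permanence property actually being used. These intersection computations---entirely absent from your proposal---are the real content of the paper's proof, and without them (or some substitute producing a local splitting) your induction does not close. Incidentally, the inclusion--exclusion among the shifted ideals that worries you can be handled without any splitting, since a sum of finitely many exact ideals is a quotient of their direct sum and hence exact by Kirchberg's theorem; it is the step adjoining the \emph{subalgebra} $A$ (resp.\ $\B'$) to an ideal that genuinely requires the splitting.
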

\begin{proof}
Obviously item (iii) implies item (i), and by \cite[Proposition A.13]{Kat04} we have the equivalence of items (ii) and (iii).
Therefore it suffices to show that exactness of $A$ implies exactness of $\N\T(A,\al)^\ga$.
To accomplish this we use the faithful Fock representation $(\wt{\pi},V)$ of $\N\T(A,\al)$ given by a faithful representation $\pi\colon A \to \B(H)$.
Recall that the fixed point algebra $\N\T(A,\al)^\ga$ can be described as the inductive limit of the C*-subalgebras
\[
\B_{[0,k \cdot \un{1}]} = \spn \{V_{\un{y}} \wpi(a) V_{\un{y}}^* \mid a\in A, \un{0} \leq \un{y} \leq k \cdot \un{1} \}.
\]
The proof will be completed once we show that every $\B_{[0,k \cdot \un{1}]}$ is exact.

\medskip

\noindent {\bf Claim.} The C*-algebra $\B_{[\un{0},k\cdot \bo1]} = \spn\{ V_{l \cdot \bo1} \wpi(a) V_{l \cdot \bo1}^* \mid a \in A, 0 \leq l \leq k\}$ is exact for all $k \in \bZ_+$.

\smallskip

\noindent {\bf Proof of the Claim.} By assumption $A$ is exact, and note that $\B_{[\un{0},\bo{1}]} = A + \B_{\bo1}$, where $\B_{\bo1}$ is an ideal of $\B_{[\un{0}, \bo1]}$.
Moreover we obtain
\[
\B_{\bo1} = \spn \{ V_\bo1 \wpi(a) V_\bo1^* \mid a \in A \} = V_\bo1 \wpi(A) V_\bo1^*,
\]
thus $\B_{\bo1}$ is exact.
If $A \bigcap \B_{\bo1} = (0)$ then $\B_{[\un{0},\bo{1}]}$ will be exact.
However, this is immediate since $\B_{\bo1} \subseteq \B_{(\un{0},\infty]}$ and $A \bigcap B_{(\un{0},\infty]} = (0)$.

Now assume that $\B_{[\un{0}, l\cdot \bo1]}$ is exact and recall that
\[
\B_{[\un{0},(l+1) \cdot \bo1]} = \B_{[\un{0}, l\cdot \bo1]} + \B_{(l+1)\cdot \bo1},
\]
where $\B_{(l+1)\cdot \bo1}$ is an ideal of $\B_{[\un{0},(l+1) \cdot \bo1]}$.
Once more we get that
\[
\B_{(l+1)\cdot \bo1} = \spn \{ V_{(l+1) \cdot \bo1} \wpi(a) V_{(l+1) \cdot \bo1}^* \mid a \in A \} = V_{(l+1) \cdot \bo1} \wpi(A) V_{(l+1) \cdot \bo1}^*,
\]
is exact.
It suffices to show that $\B_{[\un{0}, l\cdot \bo1]} \bigcap \B_{(l+1)\cdot \bo1} = (0)$.
To this end let
\[
X = V_{(l+1) \cdot \bo1} \wpi(a) V_{(l+1) \cdot \bo1}^* \in \B_{[\un{0}, l\cdot \bo1]},
\]
thus there are $a_m \in A$ such that
\[
V_{(l+1) \cdot \bo1} \wpi(a) V_{(l+1) \cdot \bo1}^* = \sum_{m=0}^l V_{m \cdot \bo1} \wpi(a_m) V_{m \cdot \bo1}^*.
\]
Then for every $\xi \in H$ we get that
\[
X \xi \otimes e_{\un{0}} = \sum_{m=0}^l V_{m \cdot \bo1} \wpi(a_m) V_{m \cdot \bo1}^* \xi \otimes e_{\un{0}} = \pi(a_0)\xi \otimes e_{\un{0}},
\]
and, on the other hand, that
\[
X \xi \otimes e_{\un{0}} = V_{(l+1) \cdot \bo1} \wpi(a) V_{(l+1) \cdot \bo1}^* \xi \otimes e_{\un{0}} =0.
\]
Therefore $a_0 =0$.
By induction on the vectors $\xi \otimes e_{m \cdot \bo1}$ for $m=0, 1, \dots, l$, we get that $X=0$, which completes the proof of the claim.

\medskip

For the general case, assume that
\[
\B_{[\un{0},\un{x}]} = \spn\{ V_{\un{y}} \wpi(a) V_{\un{y}}^* \mid a \in A, \un{0} \leq \un{y} \leq \un{x}\}
\]
is exact for $\un{x} = k \cdot \bo1 + \cdots + k \cdot (\bo{i} - \bo{1})$.
We will show that
\[
\B_{[\un{0},\un{x} + k \cdot \Bi]} = \spn\{ V_{\un{y}} \wpi(a) V_{\un{y}}^* \mid a \in A, \un{0} \leq \un{y} \leq \un{x} + k \cdot \Bi\}
\]
is exact.
Let us describe here the method that we will use.
By assumption the support of $\B_{[\un{0},\un{x}]}$ is a square on $(i-1)$-dimensions of size $k$.
Seen inside the $i$-th dimensional world it becomes a line segment (a hyperplane) of ``length'' $k$.
In order to enlarge it to a square on $i$-dimensions of size $k$ we add parallel ``rows'' of length $k$ one after the other to the direction of $\Bi$.
We use induction on the rows that we add on the $\Bi$-th direction. Thus, suppose that
\[
\B_{[\un{0},\un{x} + l \cdot \Bi]} = \spn\{ V_{\un{y}} \wpi(a) V_{\un{y}}^* \mid a \in A, \un{0} \leq \un{y} \leq \un{x} + l \cdot \Bi\}
\]
is exact for some $l \leq k-1$.
We will show that
\[
\B_{[\un{0},\un{x} + (l+1) \cdot \Bi]} = \spn\{ V_{\un{y}} \wpi(a) V_{\un{y}}^* \mid a \in A, \un{0} \leq \un{y} \leq \un{x} + (l+1) \cdot \Bi\}
\]
is exact.
If $\un{z} \in [\un{0},\un{x} + l \cdot \Bi]$ and $\un{w} \in [(l+1) \cdot \Bi, \un{x} + (l+1) \cdot \Bi]$ then
\[
\un{z} \vee \un{w} \in [(l+1) \cdot \Bi, \un{x} + (l+1) \cdot \Bi].
 \]
Therefore $\B_{[(l+1) \cdot \Bi, \un{x} + (l+1) \cdot \Bi]}$ is an ideal of $\B_{[\un{0},\un{x} + (l+1) \cdot \Bi]}$.
In particular we may write
\[
\B_{[\un{0},\un{x} + (l+1) \cdot \Bi]} = \B_{[\un{0},\un{x} + l \cdot \Bi]} + \B_{[(l+1) \cdot \Bi, \un{x} + (l+1) \cdot \Bi]},
\]
since the set
\[
\{\un{y} \in \bZ_+^n \mid 0 \leq y_1, \dots, y_{i-1} \leq k, 0\leq y_i \leq l+1, y_{i+1} = \dots = y_n =0\}
\]
decomposes into the disjoint sets
\[
\{\un{y} \in \bZ_+^n \mid 0 \leq y_1, \dots, y_{i-1} \leq k, 0 \leq y_i \leq l, y_{i+1} = \dots = y_n =0\}
\]
and
\[
\{\un{y} \in \bZ_+^n \mid 0 \leq y_1, \dots, y_{i-1} \leq k, y_i=l+1, y_{i+1} = \dots = y_n =0\}.
\]
It remains to show that
\[
\B_{[\un{0},\un{x} + l \cdot \Bi]} \bigcap \B_{[(l+1) \cdot \Bi, \un{x} + (l+1) \cdot \Bi]} = (0).
\]
However this follows in exactly the same way as in the claim since
\[
\B_{[(l+1) \cdot \Bi, \un{x} + (l+1) \cdot \Bi]}|_{H \otimes e_{\un{z}}} =0 \foral \un{z} \in [\un{0},\un{x} + l \cdot \Bi].
\]
Indeed every $\un{z} \in [\un{0},\un{x} + l \cdot \Bi]$ cannot be larger than any element inside $[(l+1) \cdot \Bi, \un{x} + (l+1) \cdot \Bi]$, and the proof is complete.
\end{proof}

Secondly we give necessary and sufficient conditions for $\N\T(A,\al)$ to be nuclear.

\begin{theorem}\label{T: nuclearity TNP}
Let $\al \colon \bZ_+^n \to \End(A)$ be a C*-dynamical system.
Then the following are equivalent:
\begin{enumerate}[\upshape(i)]
\item $A$ is nuclear;
\item the fixed point algebra $\N\T(A,\al)^\ga$ is nuclear;
\item $\N\T(A,\al)$ is nuclear.
\end{enumerate}
\end{theorem}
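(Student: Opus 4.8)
The plan is to follow the proof of Theorem~\ref{T: exact TNP} almost verbatim, replacing exactness by nuclearity throughout, and to isolate the single place where the two arguments genuinely diverge. As there, I would work in the faithful Fock representation $(\wpi,V)$ of $\N\T(A,\al)$ afforded by a faithful $*$-representation $\pi\colon A\to\B(H)$ on $K=H\otimes\ell^2(\bZ_+^n)$, so that $\N\T(A,\al)=\ca(\wpi,V)$ and the cores carry their concrete descriptions. The equivalence of items (ii) and (iii) is immediate from Lemma~\ref{L: nuc/ex}(ii), applied to the gauge action of the compact group $\bT^n$. It then remains to establish (iii)$\Rightarrow$(i) and (i)$\Rightarrow$(ii).

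For (iii)$\Rightarrow$(i) the subalgebra argument used in the exactness case is unavailable, because nuclearity does not pass to arbitrary C*-subalgebras. Instead I would exhibit a completely positive contractive left inverse to the inclusion $A\hookrightarrow\N\T(A,\al)$. Let $P_{\un{0}}$ be the projection of $K$ onto $H\otimes e_{\un{0}}$ and set $E=P_{\un{0}}(\cdot)P_{\un{0}}$. A direct computation on the spanning elements yields $E(V_{\un{x}}\wpi(a)V_{\un{y}}^*)=0$ unless $\un{x}=\un{y}=\un{0}$, in which case it equals $\pi(a)$; hence $E$ maps $\N\T(A,\al)$ onto $\pi(A)\cong A$ and restricts to the identity on $A$. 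Being a compression by a projection, $E$ is completely positive and contractive, so composing the completely positive approximation of $\N\T(A,\al)$ with the inclusion and with $E$ transfers the completely positive approximation property to $A$; thus $A$ is nuclear.

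For (i)$\Rightarrow$(ii) I would transcribe the proof of Theorem~\ref{T: exact TNP} line by line. Since nuclearity is preserved under inductive limits and $\N\T(A,\al)^\ga$ is the inductive limit of the cores $\B_{[\un{0},k\cdot\un{1}]}$, it suffices to prove that each such core is nuclear, which I would do by the same double induction on dimensions and on rows. The structural identities are unchanged: at each stage one has $\B_{[\un{0},\un{x}+(l+1)\cdot\Bi]}=\B_{[\un{0},\un{x}+l\cdot\Bi]}+\B_{[(l+1)\cdot\Bi,\un{x}+(l+1)\cdot\Bi]}$, in which the second summand is an ideal meeting the first in $(0)$, so that the quotient $\B_{[\un{0},\un{x}+(l+1)\cdot\Bi]}/\B_{[(l+1)\cdot\Bi,\un{x}+(l+1)\cdot\Bi]}$ is isomorphic to $\B_{[\un{0},\un{x}+l\cdot\Bi]}$. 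The only points to verify are that the two outer terms are nuclear: the quotient is nuclear by the inductive hypothesis, while conjugation by the isometry $V_{(l+1)\cdot\Bi}$ identifies the ideal $\B_{[(l+1)\cdot\Bi,\un{x}+(l+1)\cdot\Bi]}$ with the lower core $\B_{[\un{0},\un{x}]}$ (nuclear by the induction on dimensions), and in the base case identifies $\B_{\bo{1}}$ with $\wpi(A)\cong A$. One then glues with the permanence of nuclearity under extensions, which is Lemma~\ref{L: nuc/ex}(i) applied to the identity maps of the relevant short exact sequence.

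The main obstacle is precisely the implication (iii)$\Rightarrow$(i): whereas exactness descends to the subalgebra $A\subseteq\N\T(A,\al)$ for free, nuclearity does not, and the whole weight falls on producing the vacuum expectation $E$ above, equivalently on recognising $A$ as the range of a completely positive contractive projection. Once this is in hand, the remaining implications are a mechanical translation of the exactness argument, since every permanence property invoked there---closure under ideals, quotients, extensions, inductive limits, and isometric conjugation---holds equally for nuclearity.
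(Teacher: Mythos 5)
Your proof is correct, and its skeleton coincides with the paper's: (i)$\Rightarrow$(ii)$\Leftrightarrow$(iii) is obtained in both by transcribing the exactness argument of Theorem \ref{T: exact TNP} and invoking Lemma \ref{L: nuc/ex}(ii), and both descent arguments hinge on compression to the vacuum subspace $H \otimes e_{\un{0}}$ of the Fock space. The genuine divergence is in how that compression is exploited. The paper proves (ii)$\Rightarrow$(i): it observes that $H \otimes e_{\un{0}}$ is \emph{reducing} for the fixed point algebra $\N\T(A,\al)^\ga$ (each $\sum_{\un{x}} V_{\un{x}}\wpi(a_{\un{x}})V_{\un{x}}^*$ maps $H\otimes e_{\un{0}}$ into itself, as does its adjoint), so the compression restricted to $\N\T(A,\al)^\ga$ is a $*$-epimorphism onto $\pi(A)\cong A$, and then only the elementary fact that nuclearity passes to quotients is needed. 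You instead prove (iii)$\Rightarrow$(i): you compress the \emph{full} algebra $\N\T(A,\al)$, where $H\otimes e_{\un{0}}$ is merely semi-invariant, so your $E=P_{\un{0}}(\cdot)P_{\un{0}}$ is only a c.p.c.\ conditional expectation onto $\pi(A)$, and you must then appeal to the heavier (though standard) permanence fact that the range of a c.p.c.\ projection on a nuclear C*-algebra is nuclear, via the completely positive approximation property: $\id_A = \lim_n (E\psi_n)(\phi_n|_A)$ when $\id_{\N\T(A,\al)} = \lim_n \psi_n\phi_n$. Both routes are valid and both close the cycle of implications; the paper's buys a more elementary permanence property at the price of first passing to the fixed point algebra, while yours avoids any structural analysis of $\N\T(A,\al)^\ga$ in the descent step at the price of invoking the CPAP characterization of nuclearity.
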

\begin{proof}
For the implications [(i) $\Rightarrow$ (ii) $\Leftrightarrow$ (iii)] proceed as in the proof of Theorem \ref{T: exact TNP} by replacing exactness with nuclearity.
It suffices to show that item (ii) implies item (i).
To this end note that for $X = \sum_{\un{x}=\un{0}}^{k \cdot \un{1}} V_{\un{x}} \wpi(a_{\un{x}}) V_{\un{x}}^*$ and $\xi \in H$ we obtain
\[
X\xi\otimes e_{\un{0}} = \pi(a_{\un{0}})\xi \otimes e_{\un{0}} \in H\otimes e_{\un{0}}.
\]
However the subspace $H \otimes e_{\un{0}}$ is reducing for $\N\T(A,\al)^\ga$, and hence the projection on $H \otimes e_{\un{0}}$ defines a $*$-homomorphism onto $\wpi(A)$.
Thus $A$ is nuclear, since nuclearity passes to quotients.
\end{proof}

\subsection{The Cuntz-Nica-Pimsner algebra}

Now we pass to the examination of exactness and nuclearity of $\N\O(A,\al)$.

\begin{theorem}\label{T: exact CNP}
Let $\al \colon \bZ_+^n \to \End(A)$ be a C*-dynamical system.
Then the following are equivalent:
\begin{enumerate}[\upshape(i)]
\item $A$ is exact;
\item the fixed point algebra $\N\O(A,\al)^\ga$ is exact;
\item $\N\O(A,\al)$ is exact.
\end{enumerate}
\end{theorem}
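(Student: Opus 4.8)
The plan is to reduce everything to the robust permanence properties of exactness together with the dilation picture of Subsection \ref{Ss: CNP}. Two of the three implications are essentially free. The equivalence of items (ii) and (iii) follows at once from Lemma \ref{L: nuc/ex}(ii) applied to the gauge action $\{\ga_{\un z}\}_{\un z \in \bT^n}$ of the compact group $\bT^n$ on $\N\O(A,\al)$, whose fixed point algebra is $\N\O(A,\al)^\ga$. For (iii) $\Rightarrow$ (i) I would simply recall that $A$ embeds isometrically as a C*-subalgebra of $\N\O(A,\al)$, and that exactness is inherited by C*-subalgebras; this is exactly the point at which the argument diverges from the nuclear case, since nuclearity does not pass to subalgebras.

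The content is therefore the implication (i) $\Rightarrow$ (iii), and here I would route through the automorphic dilation rather than imitate the core-by-core induction of Theorem \ref{T: exact TNP}. Recall from Subsection \ref{Ss: CNP} that $\N\O(A,\al)$ is a full corner $p(\wt B \rtimes_{\wt\be} \bZ^n)p$ of a crossed product by $\bZ^n$. I would establish exactness of $\N\O(A,\al)$ in three steps: first that $A$ exact forces $\wt B$ to be exact; second that $\wt B \rtimes_{\wt\be} \bZ^n$ is then exact; and third that a corner of an exact C*-algebra is exact.

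For the first step, in the unital case $\wt B$ is the inductive limit of the algebras $B = \sum^{\oplus}_{\un x \in \bZ_+^n} A/I_{\un x}$ under the connecting maps induced by $\be$. Since exactness passes to quotients, is preserved under $c_0$-direct sums, and is preserved under inductive limits \cite{BroOza08}, exactness of $A$ propagates to each $A/I_{\un x}$, then to $B$, and finally to $\wt B$. In the non-unital case the same computation applies after replacing $A$ by its unitization $A^{(1)} = A + \bC$, which is exact whenever $A$ is (it sits in an extension of $\bC$ by $A$), and passing to $\wt{B^{(1)}}$. For the second step, $\bZ^n$ is amenable, hence exact, so the full and reduced crossed products coincide and the reduced crossed product of an exact C*-algebra by an exact discrete group is again exact \cite{BroOza08}. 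For the third step, the corner $p(\wt B \rtimes_{\wt\be} \bZ^n)p$ is a hereditary C*-subalgebra of $\wt B \rtimes_{\wt\be} \bZ^n$, so it inherits exactness; this yields exactness of $\N\O(A,\al)$ and completes the implication.

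I do not expect a genuine obstacle: every step is an instance of the stability of exactness under subalgebras, quotients, extensions, $c_0$-sums, inductive limits, and crossed products by amenable groups. The only care required is bookkeeping in the first step, namely correctly tracking the dilation construction through these operations and handling the non-unital case via $A^{(1)}$. Conceptually, the one thing worth noticing is that the dilation-to-crossed-product trick of \cite{DFK14} lets us sidestep the multidimensional lattice induction entirely, which is what makes the Cuntz-Nica-Pimsner case no harder than the Toeplitz case for exactness.
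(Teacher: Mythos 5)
Your proposal is correct, and for the two easy implications ((ii) $\Leftrightarrow$ (iii) via Lemma \ref{L: nuc/ex}(ii), and (iii) $\Rightarrow$ (i) via passing to the C*-subalgebra $A$) it coincides with the paper. The substantive difference is in (i) $\Rightarrow$ (iii). The paper's proof is a two-line reduction: by Theorem \ref{T: exact TNP} exactness of $A$ gives exactness of $\N\T(A,\al)$ and $\N\T(A,\al)^\ga$, and since $\N\O(A,\al)$ and $\N\O(A,\al)^\ga$ are quotients of these, Kirchberg's theorem that exactness passes to quotients finishes the argument. You instead route through the automorphic dilation of Subsection \ref{Ss: CNP}: exactness of $A$ propagates to the quotients $A/I_{\un{x}}$ (Kirchberg again), then to the $c_0$-sum $B$, the inductive limit $\wt{B}$, the crossed product $\wt{B} \rtimes_{\wt{\be}} \bZ^n$ (amenability of $\bZ^n$), and finally to the corner $\N\O(A,\al)$. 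Both routes invoke Kirchberg's quotient theorem at some point, so neither is more elementary in substance; what your route buys is independence from Theorem \ref{T: exact TNP}, so the Cuntz--Nica--Pimsner case is handled without the multidimensional core induction, whereas the paper's route is shorter because that induction has already been carried out for the Toeplitz algebra. It is worth noting that your strategy is precisely the one the paper itself adopts for \emph{nuclearity} of $\N\O(A,\al)$ (Proposition \ref{P: inj nuc} and Theorem \ref{T: nuclear CNP}), where the quotient shortcut is unavailable since nuclearity does not pass to $\N\O(A,\al)$ from $\N\T(A,\al)$ in that fashion without extra hypotheses; so your proof in effect unifies the exactness and nuclearity arguments at the cost of redoing permanence bookkeeping (quotients, $c_0$-sums, inductive limits, crossed products by $\bZ^n$, corners) that the paper gets for free from Theorem \ref{T: exact TNP}. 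Your handling of the non-unital case through $A^{(1)}$ and $\wt{B^{(1)}}$ is also consistent with the identification of $\N\O(A,\al)$ as a full corner stated in Subsection \ref{Ss: CNP}.
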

\begin{proof}
If item (i) holds then $\N\T(A,\al)$ and $\N\T(A,\al)^\ga$ are exact by Theorem \ref{T: exact TNP}.
Since $\N\O(A,\al)$ and $\N\O(A,\al)^\ga$ are quotients of $\N\T(A,\al)$ and $\N\T(A,\al)^\ga$ respectively, we obtain that items (ii) and (iii) hold.
Moreover item (ii) implies item (iii) by \cite[Proposition A.13]{Kat04}, and item (iii) implies item (i) since $A$ is represented faithfully inside $\N\O(A,\al)$.
\end{proof}

Next we focus on nuclearity of $\N\O(A,\al)$.
We isolate the injective case.
Recall that if $\al \colon \bZ_+^n \to \End(A)$ is injective then it can be extended to the automorphic system $\wt{\al} \colon \bZ^n \to \Aut(\wt{A})$. In this case we have that $\N\O(A,\al)^\ga = \wt{A}$ and $\N\O(A,\al) \simeq \wt{A} \rtimes_{\wt{\al}} \bZ^n$.

\begin{proposition}\label{P: inj nuc}
Let $\al \colon \bZ_+^n \to \End(A)$ be an injective C*-dynamical system.
Then the following are equivalent:
\begin{enumerate}[\upshape(i)]
\item the embedding $A \hookrightarrow \wt{A}$ is nuclear;
\item the fixed point algebra $\wt{A} = \N\O(A,\al)^\ga$ is nuclear;
\item $\N\O(A,\al)$ is nuclear.
\end{enumerate}
\end{proposition}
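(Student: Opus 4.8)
The plan is to run the cycle of implications by playing the crossed-product picture $\N\O(A,\al)\simeq\wt{A}\rtimes_{\wt\al}\bZ^n$ against the direct-limit description $\wt{A}=\varinjlim(A_{\un y},\al_{\un x})$. The equivalence of items (ii) and (iii) is the cheapest: the gauge action $\ga$ of the \emph{compact} group $\bT^n$ on $\N\O(A,\al)$ has fixed point algebra exactly $\N\O(A,\al)^\ga=\wt{A}$, so Lemma \ref{L: nuc/ex}(ii) gives at once that $\N\O(A,\al)$ is nuclear if and only if $\wt{A}$ is nuclear.

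For (ii)$\Rightarrow$(i) I would note that nuclearity of $\wt{A}$ means precisely that $\id_{\wt A}$ is a nuclear map. Writing $\iota_{\un 0}\colon A\hookrightarrow\wt{A}$ for the embedding, we have $\iota_{\un 0}=\id_{\wt A}\circ\,\iota_{\un 0}$, and the composition of a $*$-homomorphism with a nuclear map is again nuclear; hence item (i) follows immediately.

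The substance is (i)$\Rightarrow$(ii). Here I would first upgrade nuclearity of the single embedding $\iota_{\un 0}$ to nuclearity of every canonical map $\iota_{\un x}\colon A_{\un x}\to\wt{A}$ into the direct limit. The automorphic dilation relates the $\un 0$-th and the $\un x$-th copies by the identity $\wt\al_{\un x}\circ\iota_{\un x}=\iota_{\un 0}$ (read off the right-hand square of the defining diagram), equivalently $\iota_{\un x}=\wt\al_{-\un x}\circ\iota_{\un 0}$; since $\wt\al_{-\un x}$ is a $*$-automorphism of $\wt{A}$ and nuclearity of a map is preserved under composition with an isomorphism, each $\iota_{\un x}$ is nuclear. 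Because $\bZ_+^n$ is directed and each connecting map is injective (the system being injective), $\wt{A}$ is the closed increasing union of the copies $\iota_{k\cdot\un 1}(A)\cong A$ along the cofinal sequence $k\cdot\un 1$. It then remains to pass from ``every embedding is nuclear'' to ``$\id_{\wt A}$ is nuclear''. Given a finite set $F\subseteq\wt{A}$ and $\eps>0$, I would approximate $F$ within $\eps/3$ by elements $\iota_k(a_j)$ with $\iota_k:=\iota_{k\cdot\un 1}$ and $k$ large, apply nuclearity of $\iota_k$ to obtain completely positive contractions $\phi\colon A\to M_N$ and $\psi\colon M_N\to\wt{A}$ with $\|\psi\phi(a_j)-\iota_k(a_j)\|<\eps/3$, and then—crucially—invoke Arveson's extension theorem to extend the completely positive contraction $\phi\circ\iota_k^{-1}$, defined on the subalgebra $\iota_k(A)\cong A$, to a completely positive contraction $\Phi\colon\wt{A}\to M_N$. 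A three-$\eps$ estimate gives $\|\psi\,\Phi(x)-x\|<\eps$ for every $x\in F$, so $\id_{\wt A}$ is nuclear and therefore $\wt{A}$ is nuclear.

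I expect the main obstacle to be this last implication (i)$\Rightarrow$(ii): one must exploit both the automorphic symmetry of the dilation, to promote the single hypothesis on $\iota_{\un 0}$ to nuclearity of \emph{all} the embeddings, and the injectivity of the connecting maps together with Arveson extension, to convert the local matrix factorizations into a genuine factorization of $\id_{\wt A}$ through a finite-dimensional algebra. By contrast the equivalence (ii)$\Leftrightarrow$(iii) and the implication (ii)$\Rightarrow$(i) are essentially formal.
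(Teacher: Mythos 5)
Your proposal is correct and takes essentially the same approach as the paper: the equivalence (ii)$\Leftrightarrow$(iii) via Lemma \ref{L: nuc/ex}(ii), the immediate implication (ii)$\Rightarrow$(i), and, for (i)$\Rightarrow$(ii), conjugating the approximating maps by $\wt{\al}_{\pm\un{x}}$ to conclude that every embedding $A_{\un{x}} \hookrightarrow \wt{A}$ is nuclear. The only difference is that you spell out, via Arveson's extension theorem, the standard fact the paper invokes implicitly, namely that a C*-algebra which is a closed increasing union of subalgebras with nuclear inclusion maps is itself nuclear.
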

\begin{proof}
The equivalence of items (ii) and (iii) follows by \cite[Proposition A.13]{Kat04}.
Also it is immediate that item (ii) implies item (i).
In order to show that item (i) implies item (ii) it suffices to show that every embedding $A_{\un{x}} = w_{\un{x}}(A) \hookrightarrow \wt{A}$ is nuclear.
To this end suppose that the embedding $A \hookrightarrow \wt{A}$ is nuclear by an approximation
\[
\xymatrix@C=.75cm{
A \ar[rr]^{\phi_n} & & M_{k(n)} \ar[rr]^{\psi_n} & & \wt{A}
}
\]
and fix ${\un{x}} \in \bZ_+^n$.
By construction of the system $\wt{\al} \colon \bZ^n \to \Aut(\wt{A})$ we get that $\wt{\al}_{\un{x}}(A_{\un{x}}) = A_0 = A$.
Therefore the approximation
\[
\xymatrix@C=.75cm{
A_{\un{x}} \ar[rr]^{\phi_n \wt\al_{\un{x}}} & & M_{k(n)} \ar[rr]^{\wt\al_{-\un{x}} \psi_n} & & \wt{A}
}
\]
implies that the $*$-homomorphism
\[
\wt{\al}_{-{\un{x}}} \wt{\al}_{{\un{x}}}|_{A_{\un{x}}} \equiv \id_{A_{\un{x}}} \colon A_{\un{x}} \hookrightarrow \wt{A}
\]
is nuclear, and the proof is complete.
\end{proof}

\begin{remark}
The previous result holds also for injective C*-dynamical systems over spanning cones $P$ (see \cite{DFK14} for the pertinent definitions).
\end{remark}

\begin{theorem}\label{T: nuclear CNP}
Let $\al \colon \bZ_+^n \to \End(A)$ be a C*-dynamical system.
Let the C*-dynamical systems $\be \colon \bZ_+^n \to \End(B)$ and $\wt{\be} \colon \wt{B} \to \Aut(\wt{B})$ be as in Subsection \ref{Ss: CNP}.
Then the following are equivalent:
\begin{enumerate}
\item the embeddings $B_{\un{x}} \hookrightarrow \wt{B}$ are nuclear for all $\un{x} \in \bZ_+^n$;
\item the embedding $B \hookrightarrow \wt{B}$ is nuclear;
\item $\wt{B}$ is nuclear;
\item $\wt{B} \rtimes_{\wt{\be}} \bZ^n$ is nuclear;
\item $\N\O(A,\al)$ is nuclear.
\end{enumerate}
\end{theorem}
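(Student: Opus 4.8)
The plan is to reduce the entire chain of equivalences to the injective case treated in Proposition \ref{P: inj nuc}, applied to the \emph{injective} dilation $\be \colon \bZ_+^n \to \End(B)$, together with the full-corner description of $\N\O(A,\al)$ recorded in Subsection \ref{Ss: CNP}. Two structural facts do the heavy lifting: since $\be$ is injective with minimal automorphic extension $\wt\be$, one has $\N\O(B,\be) \simeq \wt{B} \rtimes_{\wt\be} \bZ^n$ with fixed point algebra $\N\O(B,\be)^\ga = \wt B$; and $\N\O(A,\al)$ is a full corner $p\,(\wt B \rtimes_{\wt\be} \bZ^n)\,p$.

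I would first dispose of the block of items (ii), (iii), (iv) at one stroke by applying Proposition \ref{P: inj nuc} to $\be$. Under that substitution the role of the embedding $A \hookrightarrow \wt A$ is played by $B \hookrightarrow \wt B$, the role of the fixed point algebra $\wt A$ by $\wt B$, and the role of $\N\O(A,\al) \simeq \wt A \rtimes_{\wt\al} \bZ^n$ by $\N\O(B,\be) \simeq \wt B \rtimes_{\wt\be} \bZ^n$. Hence items (ii), (iii) and (iv) are equivalent with no further argument.

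For (iv) $\Leftrightarrow$ (v) I would invoke the full-corner realization: $\N\O(A,\al)$ and $\wt B \rtimes_{\wt\be} \bZ^n$ are strongly Morita equivalent. Nuclearity passes to hereditary C*-subalgebras, which yields (iv) $\Rightarrow$ (v) since a corner is hereditary; and nuclearity is a Morita invariant, so a \emph{full} corner $pCp$ is nuclear if and only if $C$ is, giving (v) $\Rightarrow$ (iv).

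It remains to compare the single embedding in (ii) with the family in (i). The direction (ii) $\Rightarrow$ (i) is immediate, because each summand inclusion $B_{\un x} \hookrightarrow B$ is a $*$-homomorphism and precomposing the nuclear map $B \hookrightarrow \wt B$ with it preserves nuclearity. For (i) $\Rightarrow$ (ii) I would exploit that $B = \sum^{\oplus}_{\un x \in \bZ_+^n} B_{\un x}$ is a $c_0$-direct sum: given a finite subset of $B$ and a tolerance, approximate its elements by elements supported on a finite set $S$ of coordinates, then splice the finitely many nuclear factorizations of the $B_{\un x} \hookrightarrow \wt B$ with $\un x \in S$ through $\bigoplus_{\un x \in S} M_{k(\un x)}$ into a single completely positive factorization approximating $\bigoplus_{\un x \in S} B_{\un x} \hookrightarrow \wt B$, and hence the embedding of all of $B$. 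I do not expect a single deep obstacle here, since the substance lies in Proposition \ref{P: inj nuc} and the full-corner description; but the most delicate bookkeeping is precisely this passage (i) $\Rightarrow$ (ii), where one must reduce the infinite $c_0$-direct sum to a finite subsum uniformly and assemble the matrix factorizations without degrading the approximation.
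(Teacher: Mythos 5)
Your proposal is correct and follows essentially the same route as the paper: the block (ii)--(iv) is handled by Proposition \ref{P: inj nuc} applied to the injective dilation $\be \colon \bZ_+^n \to \End(B)$, the equivalence with (v) by the full-corner/strong Morita equivalence of $\N\O(A,\al)$ with $\wt{B} \rtimes_{\wt{\be}} \bZ^n$, and the passage from (i) to (ii) by the same orthogonal-ideals/finite-subsum (inductive limit) argument that the paper states tersely. The only cosmetic difference is that you obtain (i) from (ii) by precomposing with the summand inclusions, whereas the paper deduces (i) from (iii) using nuclearity of $\wt{B}$.
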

\begin{proof}
Since the C*-algebras $B_{\un{x}}$ are orthogonal ideals of $B$, and $B$ is an inductive limit of $\bigcup_{\un{x} \leq k \cdot \un{1}} B_{\un{x}}$ then the embedding $B \hookrightarrow \wt{B}$ is nuclear.
The equivalences of items (ii), (iii), and (iv) follow from Proposition \ref{P: inj nuc}.
Moreover since all the $B_{\un{x}}$ are C*-subalgebras of $\wt{B}$, item (iii) implies item (i).
The equivalence of items (iv) and (v) is immediate since $\N\O(A,\al)$ is strong Morita equivalent to $\wt{B} \rtimes_{\wt{\be}} \bZ^n \simeq \N\O(\wt{B},\wt{\be})$.
\end{proof}

\begin{remark}\label{R: nuclear CNP}
It is evident that when $A$ is nuclear then $\N\O(A,\al)$ is nuclear.
However the converse is not true.
Katsura constructs such a counterexample for the one-variable case in \cite[Example 7.7]{Kat04}.
The same construction can be extended to give a multivariable counterexample.
That is, let $A = \oplus_{n \in \bZ} A_n$ where $A_n = B$ is nuclear for $n > 0$ and $A_n = D$ is a non-nuclear C*-subalgebra of $B$ for all $n \leq 0$.
Let $\al_{\bo1}$ be the forward shift on $A$ and let $\al_{\bo2} = \dots = \al_{\bo{n}} = \id_A$.
Then the fixed point algebra $\N\O(A,\al)^\ga$ coincides with $B_{[0, \infty \cdot \bo{1}]}$ which is the direct limit of $A$ by $\al_{\bo1}$.
Thus $\N\O(A,\al)^\ga$ coincides with $\oplus_{n \in \bZ} B$.
The latter is nuclear hence $\N\O(A,\al)$ is nuclear.
However by construction $A$ is not nuclear.
\end{remark}

We give a second condition that implies the nuclearity of $\N\O(A,\al)$.

\begin{theorem}\label{T: nuclear CNP 2}
Let $\al \colon \bZ_+^n \to \End(A)$ be a C*-dynamical system.
If the embedding $A \hookrightarrow B_{[\un{0},\infty\cdot \Bi]}$ is nuclear for some $\Bi \in \{\bo{1}, \dots, \bo{n}\}$ then $\N\O(A,\al)$ is nuclear.
\end{theorem}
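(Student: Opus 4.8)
The plan is to exploit the elementary but crucial fact that a nuclear map followed by a $*$-homomorphism is again nuclear, together with the reduction to the automorphic dilation furnished by Theorem \ref{T: nuclear CNP}. The point is that the core $\B_{[\un{0},\infty\cdot\Bi]}$ always sits inside the fixed point algebra $\N\O(A,\al)^\ga$, so the hypothesised nuclearity of $A \hookrightarrow \B_{[\un{0},\infty\cdot\Bi]}$ will propagate, by composition, to a nuclear embedding of $A$ into any C*-algebra containing $\B_{[\un{0},\infty\cdot\Bi]}$.

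First I would settle the injective case, where the argument is clean. If $\al$ is injective then $\N\O(A,\al)^\ga = \wt{A}$ and a short computation with the unitary $U_\Bi$ and the covariance relation identifies $\B_{[\un{0},\infty\cdot\Bi]} = \ol{\bigcup_{l\geq 0}\wt\al_{-l\Bi}(A)}$ as a C*-subalgebra of $\wt{A}$. Composing the nuclear embedding $A \hookrightarrow \B_{[\un{0},\infty\cdot\Bi]}$ with the inclusion $\B_{[\un{0},\infty\cdot\Bi]} \hookrightarrow \wt{A}$ then shows that $A \hookrightarrow \wt{A}$ is nuclear, and Proposition \ref{P: inj nuc} immediately yields that $\N\O(A,\al)$ is nuclear.

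For the general case I would pass to the injective dilation $\be \colon \bZ_+^n \to \End(B)$ and its automorphic extension $\wt\be$. By Theorem \ref{T: nuclear CNP} it suffices to prove that $\N\O(B,\be) \simeq \wt{B} \rtimes_{\wt\be} \bZ^n$ is nuclear, and since $\be$ is injective this would follow from the injective case applied to $\be$, provided the hypothesis transfers, i.e. provided the embedding $B \hookrightarrow \B^\be_{[\un{0},\infty\cdot\Bi]}$ is nuclear. Writing $B = \sum^{\oplus}_{\un{x}} B_{\un{x}}$ with $B_{\un{x}} = A/I_{\un{x}}$, it is enough to show that each orthogonal summand embeds nuclearly and then to assemble the finite sub-sums exactly as in the proof of Theorem \ref{T: nuclear CNP}. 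Here I would use that $I_{\un{x}}$ is $\al_\Bi$-invariant precisely when $x_i = 0$: for such $\un{x}$ the quotient map $q_{\un{x}} \colon A \to B_{\un{x}}$ intertwines the one-variable systems in the $\Bi$-direction and hence induces a $*$-homomorphism out of $\B^\al_{[\un{0},\infty\cdot\Bi]}$ which sends the nuclear embedding of $A$ to a (nuclear) map $q_{\un{x}}$ into the $\Bi$-core of $B_{\un{x}}\subseteq \B^\be_{[\un{0},\infty\cdot\Bi]}$; for $x_i \geq 1$ one has $B_{\un{x}} = B_{\un{x} - x_i\Bi}$, and these summands are reached by the pure $\Bi$-shift of the tail, so their embeddings are $\wt\be$-conjugate to ones already treated. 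Once $B \hookrightarrow \B^\be_{[\un{0},\infty\cdot\Bi]} \hookrightarrow \wt{B}$ is seen to be nuclear, Proposition \ref{P: inj nuc} applied to $\be$ gives that $\wt{B}$ is nuclear, whence $\N\O(A,\al)$ is nuclear.

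The main obstacle is precisely this transfer of the hypothesis from $\al$ to $\be$. Concretely one must check a descent statement of the form ``$\theta \circ q$ nuclear $\Rightarrow \theta$ nuclear'' for a quotient map $q$, so that the nuclearity of $A \hookrightarrow \B_{[\un{0},\infty\cdot\Bi]}$ actually descends through $q_{\un{x}}$ to the inclusion of $B_{\un{x}}$, and one must organise the bookkeeping over the $\Bi$-towers of the tail so that it is compatible with the gauge-equivariant full-corner identification $\N\O(A,\al) = p\,(\wt{B}\rtimes_{\wt\be}\bZ^n)\,p$. The injective case and the reduction to $\be$ are routine; it is the summand-by-summand descent that carries the real content, since a Morita shortcut is blocked by the fact that $A = B_{\un{0}}$ is only one orthogonal ideal of $B$ and is not full.
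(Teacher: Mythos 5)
Your injective case is fine: there $\B_{[\un{0},\infty\cdot\Bi]}=\ol{\bigcup_{l\geq 0}\wt\al_{-l\Bi}(A)}$ sits inside $\wt{A}=\N\O(A,\al)^\ga$, composing the hypothesised nuclear embedding with this inclusion gives item (i) of Proposition \ref{P: inj nuc}, and that proposition finishes the argument. The general case, however, has a genuine gap, and it is exactly where you locate it: the ``descent statement'' that $\theta\circ q$ nuclear implies $\theta$ nuclear for a quotient map $q$ is asserted as something ``one must check'' but is never proved, and it is not a routine fact. Nuclearity of $\theta\circ q_{\un{x}}$ gives approximations $A\to M_k\to C$ whose first leg factors through $A$, not through $B_{\un{x}}=A/I_{\un{x}}$; to push the approximation down to $B_{\un{x}}$ one needs an (approximate) completely positive splitting of $q_{\un{x}}$, i.e.\ local splitness of the extension $0\to I_{\un{x}}\to A\to B_{\un{x}}\to 0$, and such splittings do not exist for general extensions. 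In the present setting the descent can in fact be rescued, but only through heavy external machinery: your hypothesis forces $A$ to be exact (a nuclear embedding into any C*-algebra yields one into $\B(H)$, which by Kirchberg's theorem characterises exactness), exactness passes to quotients (Kirchberg), exact C*-algebras are locally reflexive, and locally reflexive quotients have locally split extensions (Effros--Haagerup), after which Arveson extension of the compressed maps gives what you want. None of this is in the paper's toolkit, and none of it appears in your argument; since you yourself identify this step as ``carrying the real content,'' the proof is incomplete precisely at its core. Two smaller points compound this: the identity $B_{\un{x}}=B_{\un{x}-x_i\Bi}$ for $x_i\geq 1$ is false, since $I_{\un{x}}$ depends only on $\supp\un{x}$ (for $n=1$ it would read $A/I_{\bo1}=A$); and the $*$-homomorphism ``induced'' by $q_{\un{x}}$ out of $\B^{\al}_{[\un{0},\infty\cdot\Bi]}$ is asserted rather than constructed, which requires checking that $q_{\un{x}}$ is compatible with the Cuntz--Nica relations defining that core inside $\N\O(A,\al)$.

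For contrast, the paper's proof is organised precisely so that no descent through a quotient map is ever needed. It stays inside $\N\O(A,\al)\simeq\ca(\pi,U)$ and transfers the given approximation of $A\hookrightarrow\B_{[\un{0},\infty\cdot\bo1]}$ to the subalgebras $\B_{(\un{0},l\cdot\bo1]}\hookrightarrow\B_{(\un{0},\infty\cdot\bo1]}$ by composing with $\ad_{U_{\bo1}^*}$ and $\ad_{U_{\bo1}}$ --- completely positive maps that go the right way, so no lifting problem arises. The quotient $A/I_{\bo1}$ enters only through the identifications $A/I_{\bo1}\simeq\B_{[\un{0},l\cdot\bo1]}/\B_{(\un{0},l\cdot\bo1]}\simeq\B_{[\un{0},\infty\cdot\bo1]}/\B_{(\un{0},\infty\cdot\bo1]}$ (obtained from the relation $a(I-U_{\bo1}U_{\bo1}^*)=0$), which feed into Katsura's Proposition A.6 and Lemma \ref{L: dl nuclear}; nuclearity of the whole fixed point algebra then follows by repeating the scheme coordinate by coordinate. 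If you want to keep your dilation-based outline, you must either prove the descent statement in this specific setting or restructure the transfer so that, as in the paper, all approximations are moved by conjugation rather than pushed through quotients.
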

\begin{proof}
Let $(\pi,U)$ be a Cuntz-Nica covariant pair such that $\N\O(A,\al) \simeq \ca(\pi, U)$.
Without loss of generality we assume that $\Bi = \bo1$.
We will show that the fixed point algebra $\N\O(A,\al)^\ga$ is nuclear.
To this end we will show inductively that the C*-subalgebras
\[
\B_{[\un{0}, \infty \cdot \bo1 + \dots + \infty \cdot \bo{m}]} = \ol{\spn}\{U_{\un{x}} a U_{\un{x}}^* \mid \supp \un{x} = \{\bo1, \dots, \bo{m}\},  a \in A\}
\]
of $\ca(\pi,U)$ are nuclear.

We begin with $\bo{m}=\bo1$.
First we show that
\[
A/ I_{\bo1} \simeq \B_{[\un{0},l \cdot \bo1]}/ \B_{(\un{0}, l \cdot \bo1]} \simeq \B_{[\un{0},\infty \cdot \bo1]}/ \B_{(\un{0}, \infty \cdot \bo1]},
\]
for all $l > 0$.
Indeed note that $\B_{[\un{0},l \cdot \bo1]}/ \B_{(\un{0}, l \cdot \bo1]} \simeq A/ A \cap \B_{(\un{0}, l \cdot \bo1]}$ and recall that
\[
I_\bo1 \subseteq \B_{\bo1} \subseteq \B_{(\un{0}, l \cdot \bo1]} \subseteq \B_{(\un{0},\infty \cdot \bo1]}.
\]
Now let $a \in A \bigcap \B_{(\un{0}, l \cdot \bo1]}$ and recall that $U_\bo1 U_\bo1^*$ is an identity for $\B_{(\un{0}, l \cdot \bo1]}$.
Thus $a = a U_\bo1U_\bo1^*$ which implies that $ a(I - U_\bo1 U_\bo1^*)=0$.
Hence we get that $a \in I_\bo1$.
Therefore we obtain $A \bigcap \B_{(\un{0}, l \cdot \bo1]} = I_\bo1$.
The very same arguments give also that $A \bigcap \B_{(\un{0},\infty \cdot \bo1]} = I_\bo1$.

We aim to apply Lemma \ref{L: dl nuclear} and so we need to verify that the embeddings $\B_{(\un{0},l \cdot \bo1]} \hookrightarrow \B_{(\un{0},\infty \cdot \bo1]}$ are nuclear.
We show this inductively.
By assumption suppose that
\[
\xymatrix@C=.75cm{
A \ar[rr]^{\phi_n} & & M_{k(n)} \ar[rr]^{\psi_n} & & \B_{[\un{0},\infty \cdot \bo1]}
}
\]
is an approximation of $A \hookrightarrow \B_{[\un{0},\infty \cdot \bo1]}$.
Then
\[
\xymatrix@C=.75cm{
\B_{\bo1} = U_{\bo1} A U_{\bo1}^* \ar[rr]^{\quad \phi_n \ad_{U_\bo1^*}} & & M_{k(n)} \ar[rr]^{\ad_{U_\bo1} \psi_n} & & \B_{(\un{0},\infty \cdot \bo1]}
}
\]
is an approximation of $\B_{\bo1} = U_{\bo1} A U_{\bo1}^* \hookrightarrow \B_{(\un{0},\infty \cdot \bo1]}$.
Therefore applying \cite[Proposition A.6]{Kat04} to the following diagram
\[
\xymatrix{
0 \ar[r] & \B_{\bo1} \ar[r] \ar[d] & \B_{[0,1 \cdot \bo1]} \ar[r] \ar[d] & \B_{[0,1 \cdot \bo1]}/ \B_{\bo1} \ar[r] \ar@{=}[d] & 0 \\
0 \ar[r] & \B_{(\un{0},\infty \cdot \bo1]} \ar[r] & \B_{[\un{0},\infty \cdot \bo1]} \ar[r] & \B_{[\un{0},\infty \cdot \bo1]}/ \B_{(\un{0},\infty \cdot \bo1]} \ar[r] & 0
}
\]
yields that the embedding $\B_{[\un{0},1 \cdot \bo1]} \hookrightarrow \B_{[\un{0},\infty \cdot \bo1]}$ is nuclear.
Now assume that we have shown that the embedding $\B_{[\un{0},l \cdot \bo1]} \hookrightarrow \B_{[\un{0},\infty \cdot \bo1]}$ is nuclear by an approximation
\[
\xymatrix@C=.75cm{
\B_{[\un{0},l \cdot \bo1]} \ar[rr]^{\phi_n} & & M_{k(n)} \ar[rr]^{\psi_n} & & \B_{[\un{0},\infty \cdot \bo1]}
}.
\]
Then the approximation
\[
\xymatrix@C=.75cm{
\B_{(\un{0},(l+1) \cdot \bo1]} \ar[rr]^{\quad \phi_n \ad_{U_\bo1^*}} & & M_{k(n)} \ar[rr]^{\ad_{U_\bo1} \psi_n} & & \B_{(\un{0},\infty \cdot \bo1]}
}
\]
shows that the embedding $\B_{(\un{0},(l+1) \cdot \bo1]} \hookrightarrow \B_{(\un{0},\infty \cdot \bo1]}$ is nuclear.
Lemma \ref{L: dl nuclear} implies then that $\B_{[\un{0},\infty \cdot \bo1]}$ is nuclear.

For $\bo{m} = \bo2$ we repeat the same arguments by substituting $A$ with the C*-subalgebra $\B_{[\un{0},\infty \cdot \bo1]}$ of $\B_{[\un{0},\infty \cdot \bo1 + \infty \cdot \bo2]}$.
Note that $\B_{[\un{0},\infty \cdot \bo1]}$ is nuclear now, hence the embedding $\B_{[\un{0},\infty \cdot \bo1]} \hookrightarrow \B_{[\un{0},\infty \cdot \bo1 + \infty \cdot \bo2]}$ is nuclear.
A moment's thought suggests that $I_\bo1$ has to be substituted by the ideal
\[
\{X \in \B_{[\un{0},\infty \cdot \bo1]} \mid X \cdot (I-U_\bo2U_\bo2^*) =0\},
\]
since $U_\bo2 U_\bo2^*$ is an identity for the C*-algebras
\[
\spn\{U_{l \cdot \bo2} X U_{l \cdot \bo2}^* \mid X \in \B_{[\un{0},\infty \cdot \bo1]}, 0 \neq l \leq m\},
\]
for all $m \in \bZ_+$.
Induction then completes the proof.
\end{proof}

\section{KMS states on Nica-Pimsner algebras}\label{S: KMS}

In this section we work under the assumption that the system $\al \colon \bZ_+^n \to \End(A)$ is unital. Consequently the unit of $A$ is also a unit for $\N\T(A,\al)$ and $\N\O(A,\al)$.

\subsection{The Toeplitz-Nica-Pimsner algebra}

Let $\{\ga_{\un{z}}\}_{\un{z} \in \bT^n}$ be the gauge action on $\N\T(A,\al)$.
For a fixed $\un{\la} \in \bR^n$ let the action
\[
\si \colon \bR \to \Aut(\N\T(A,\al)): t \mapsto \ga_{(\exp(i\la_1t), \dots, \exp(i\la_nt))}.
\]
The monomials of the form $V_{\un{x}} a V_{\un{y}}^*$ span a dense $*$-subalgebra of analytic elements of $\N\T(A,\al)$ since the function
\[
\bR \to \N\T(A,\al): \un{t} \mapsto \si_t(V_{\un{x}} a V_{\un{y}}^*) = e^{i \sca{\un{x} - \un{y}, \un{\la}}t} V_{\un{x}} a V_{\un{y}}^*
\]
is analytically extended to the entire function
\[
\bC \to \N\T(A,\al) : z \mapsto e^{i\sca{\un{x} - \un{y}, \un{\la}}z} V_{\un{x}} a V_{\un{y}}^*.
\]
For $\be \in \bR$ the $(\si,\be)$-KMS condition is then translated into
\begin{align*}
\psi(V_{\un{x}} a V_{\un{y}}^* \cdot V_{\un{z}} b V_{\un{w}}^*)
& =
e^{-\sca{\un{x} - \un{y}, \un{\la}}\be} \psi(V_{\un{z}} b V_{\un{w}}^* \cdot V_{\un{x}} a V_{\un{y}}^*).
\end{align*}
As we will soon see there are no KMS states when $\la_k\be <0$ for some $k=1, \dots, n$.
On the other hand there is a characterization of the KMS states when $\la_k\be >0$ for all $k=1, \dots, n$.
We leave the case where some $\la_k$ are zeroes for later.
To simplify notation in our computations we will frequently use
\[
\un{\be} := \un{\la} \be = (\la_1\be, \dots, \la_n \be)
\]
throughout the proofs, i.e. $\be_k := \la_k \be$ for all $k=1, \dots,n$.

\begin{proposition}\label{P: cond KMS}
Let $\al \colon \bZ_+^n \to \End(A)$ be a unital C*-dynamical system, $\si \colon \bR \to \Aut(\N\T(A,\al))$ be the action related to $\un{\la} \in \bR^n$ as above, and $\be \in \bR$.

\begin{inparaenum}[\upshape(i)]
\item If $\la_k \be<0$ for some $k=1, \dots, n$, then there are no $(\si,\be)$-KMS states.

\item If $\la_k \be> 0$ for all $k=1, \dots, n$, then a state $\psi$ of $\N\T(A,\al)$ is a $(\si,\be)$-KMS state if and only if
    \[
    \psi(ab)=\psi(ba) \qand \psi(V_{\un{x}} a V_{\un{y}}^*) = \de_{\un{x}, \un{y}} e^{-\sca{\un{x}, \un{\la}}\be} \psi(a),
    \]
    for all $a,b \in A$ and $\un{x}, \un{y} \in \bZ_+^n$.
\end{inparaenum}
\end{proposition}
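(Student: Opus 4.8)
The plan is to reduce everything to the single relation coming from the KMS condition on the analytic monomials $V_{\un{x}} a V_{\un{y}}^*$, using that $\si_{i\be}(V_{\un{x}} a V_{\un{y}}^*) = e^{-\sca{\un{x} - \un{y}, \un{\la}}\be} V_{\un{x}} a V_{\un{y}}^*$. For item (i) I would simply take $a = V_{\Bk}^*$ and $b = V_{\Bk}$ in the defining identity $\psi(ab) = \psi(b\si_{i\be}(a))$, which gives $1 = \psi(V_{\Bk}^* V_{\Bk}) = e^{\la_k\be}\psi(V_{\Bk} V_{\Bk}^*)$, hence $\psi(V_{\Bk} V_{\Bk}^*) = e^{-\la_k\be}$. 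Since $V_{\Bk} V_{\Bk}^*$ is a projection we have $0 \le \psi(V_{\Bk} V_{\Bk}^*) \le 1$; if $\la_k\be < 0$ then $e^{-\la_k\be} > 1$, a contradiction.

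For item (ii) I would first record two moves, valid for every monomial $Z$ (these being analytic): from the KMS condition, $\psi(V_{\Bk} Z) = e^{-\la_k\be}\psi(Z V_{\Bk})$ and $\psi(Z V_{\Bk}^*) = e^{-\la_k\be}\psi(V_{\Bk}^* Z)$. For the forward implication the trace property on $A$ is immediate, because each $a \in A$ is $\si$-fixed, so $\psi(ab) = \psi(b\si_{i\be}(a)) = \psi(ba)$. The diagonal formula then follows by peeling the isometries one at a time: if $x_k \ge 1$, the first move together with $V_{\un{x}}^* V_{\Bk} = V_{\un{x} - \Bk}^*$ gives $\psi(V_{\un{x}} a V_{\un{x}}^*) = e^{-\la_k\be}\psi(V_{\un{x} - \Bk} a V_{\un{x} - \Bk}^*)$, and iterating over all coordinates accumulates the factor $e^{-\sca{\un{x}, \un{\la}}\be}$ and terminates at $\psi(a)$. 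The off-diagonal vanishing is the crux. Passing to adjoints I may assume $y_k > x_k$ for some $k$; the same peeling reduces to the case $x_k = 0 < y_k$, where the covariance relation $V_{\Bk}^* a = \al_\Bk(a) V_{\Bk}^*$ combined with the second move yields the self-referential identity $\psi(V_{\un{x}} a V_{\un{y}}^*) = e^{-\la_k\be}\psi(V_{\un{x}} \al_\Bk(a) V_{\un{y}}^*)$. Iterating $m$ times gives $\psi(V_{\un{x}} a V_{\un{y}}^*) = e^{-m\la_k\be}\psi(V_{\un{x}} \al_\Bk^m(a) V_{\un{y}}^*)$, and since $\|\al_\Bk^m(a)\| \le \|a\|$ while $e^{-m\la_k\be}\to 0$ (here the strict sign $\la_k\be > 0$ is essential), the left-hand side must vanish.

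For the converse I would verify the KMS relation directly on the spanning monomials. Setting $\un{u} := (\un{y} \vee \un{z}) - \un{z}$ and $\un{v} := (\un{y} \vee \un{z}) - \un{y}$ (which are disjointly supported), Nica covariance gives $V_{\un{y}}^* V_{\un{z}} = V_{\un{v}} V_{\un{u}}^*$, and the covariance relations then collapse the product to a single monomial
\[
V_{\un{x}} a V_{\un{y}}^* \cdot V_{\un{z}} b V_{\un{w}}^* = V_{\un{x} + \un{v}}\, \al_{\un{v}}(a)\al_{\un{u}}(b)\, V_{\un{w} + \un{u}}^*.
\]
Applying the assumed formula for $\psi$, this is zero unless $\un{x} + \un{z} = \un{y} + \un{w}$, in which case it equals $e^{-\sca{\un{x} + \un{v}, \un{\la}}\be}\psi(\al_{\un{v}}(a)\al_{\un{u}}(b))$. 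The symmetric computation handles the reversed product, and under the constraint $\un{x} + \un{z} = \un{y} + \un{w}$ the relevant index vectors swap roles ($\un{v}\leftrightarrow\un{u}$), so the two exponents differ by precisely the factor $e^{-\sca{\un{x}-\un{y},\un{\la}}\be}$ demanded by the KMS condition, while the two traces agree after a single use of $\psi(cd) = \psi(dc)$. This matches both sides of the required identity.

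The main obstacle is the off-diagonal vanishing in the forward direction: no finite manipulation forces $\psi(V_{\un{x}} a V_{\un{y}}^*) = 0$ once a coordinate satisfies $x_k = 0 < y_k$, and one genuinely needs the limiting step — iterating the endomorphism $\al_\Bk$ indefinitely and exploiting the strict sign $\la_k\be > 0$ to get geometric decay against a uniformly bounded quantity. Everything else is careful bookkeeping with the doubly-commuting relations.
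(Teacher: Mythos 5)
Your proposal is correct and follows essentially the same route as the paper: positivity of $\psi$ on the projection $V_{\Bk}V_{\Bk}^*$ for item (i), the iteration of the covariance relation $V_{\Bk}^*a = \al_{\Bk}(a)V_{\Bk}^*$ producing geometric decay $e^{-m\la_k\be}$ against the bound $\|\al_{\Bk}^m(a)\|\le\|a\|$ for the off-diagonal vanishing, and the collapse of products of monomials via Nica covariance (your $\un{u},\un{v}$ are the paper's $\un{y}-\un{y}\wedge\un{z}$, $\un{z}-\un{y}\wedge\un{z}$) with the coordinate-wise matching of exponents and deltas for the converse. The only differences are organizational: the paper first rewrites $\psi(V_{\un{x}}aV_{\un{y}}^*)=e^{-\sca{\un{x},\un{\be}}}\psi(aV_{\un{y}}^*V_{\un{x}})$ and iterates on the reduced form, whereas you peel coordinates and iterate directly, which is an equivalent bookkeeping choice.
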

\begin{proof}
Since the system is unital then the unit $1 \in A$ is also the unit for $\N\T(A,\al)$.
We write $\un{\be} = \un{\la} \be = (\la_1\be, \dots, \la_n \be)$ so that $\be_k = \la_k \be$ for all $k=1, \dots, n$.

For item (i) we compute
\begin{align*}
1
=
\psi(1)
& \geq
\psi(V_{\bo{k}} 1 V_{\bo{k}}^*)
 =
\psi(1 V_{\bo{k}}^* \si_{i\be}(V_{\bo{k}}))
 =
e^{-\be_k},
\end{align*}
for all $k = 1, \dots, n$.
Hence we get that $\be_k = \la_k \be \geq 0$ for all $k=1, \dots, n$.

For item (ii) fix $\un{\la} \in \bR^n$ and $\be$ such that $\be_k = \la_k \be>0$ for all $k=1, \dots,n$.
Suppose first that $\psi$ is a $(\si,\be)$-KMS state.
Then we obtain
\[
\psi(ab) = \psi(b \si_{i\be}(a)) = \psi(ba),
\]
and that
\begin{align*}
\psi(V_{\un{x}} a V_{\un{y}}^*) = \psi(a V_{\un{y}}^* \si_{i\be}(V_{\un{x}})) = e^{-\sca{\un{x}, \un{\be}}}\psi(a V_{\un{y}}^*V_{\un{x}}).
\end{align*}
It suffices to show that $\psi(a V_{\un{y}}^*V_{\un{x}})=0$ for all $a\in A$ when $\un{x} \neq \un{y}$.
By using the Nica covariance we may assume that the monomial $a V_{\un{y}}^*V_{\un{x}}$ is written in a reduced form, i.e. $\supp \un{x} \bigcap \supp \un{y} = \emptyset$ so that $V_{\un{y}}^* V_{\un{x}} = V_{\un{x}} V_{\un{y}}^*$.
First suppose that $\un{x} \neq \un{0}$ and let an $x_k > 0$.
Since $\bo{k} \in \un{y}^\perp$ we get
\begin{align*}
\psi(a V_{\un{y}}^*V_{\un{x}})
& =
\psi(a V_{\bo{k}} V_{\un{y}}^* V_{\un{x} - \bo{k}})
=
\psi(V_{\bo{k}} \al_{\bo{k}}(a) V_{\un{y}}^* V_{\un{x} - \bo{k}}) \\
& =
\psi(\al_{\bo{k}}(a) V_{\un{y}}^* V_{\un{x} - \bo{k}} \si_{i \be}(V_{\bo{k}}))
=
e^{-\be_k} \psi(\al_{\bo{k}}(a) V_{\un{y}}^*V_{\un{x}}).
\end{align*}
Inductively we get that $\psi(a V_{\un{y}}^*V_{\un{x}}) = e^{-l \cdot\be_k} \psi(\al_{l \cdot \bo{k}}(a) V_{\un{y}}^*V_{\un{x}})$ for every $l \in \bZ_+$.
Hence we get that
\[
|\psi(a V_{\un{y}}^*V_{\un{x}})| \leq e^{-l \cdot\be_k} \nor{\al_{l \cdot \bo{k}}(a) V_{\un{y}}^*V_{\un{x}}} \leq e^{-l \cdot\be_k} \nor{a}.
\]
Taking the limit over $l$ yields $\psi(a V_{\un{y}}^*V_{\un{x}})=0$ for all $a\in A$ and $\un{y} \in \bZ_+^n$, whenever $\un{x} \neq 0$.
Now if $\un{x} = \un{0}$ then we get that $\un{y} \neq \un{0}$.
By taking adjoints and applying the KMS condition we then obtain
\[
\psi(a V_{\un{y}}^*) = \ol{\psi(V_{\un{y}} a^*)} = e^{- \sca{\un{y}, \un{\be}}} \ol{\psi(a^* V_{\un{y}})} = 0.
\]

To prove the converse of item (ii) it suffices to show that if $\psi$ satisfies the two conditions then
\begin{align*}
\psi(V_{\un{x}}aV_{\un{y}}^* \cdot V_{\un{z}} b V_{\un{w}}^*)
& =
e^{-\sca{\un{x} - \un{y}, \un{\be}}}\psi(V_{\un{z}} b V_{\un{w}}^* \cdot V_{\un{x}} a V_{\un{y}}^*),
\end{align*}
for all $\un{x}, \un{y}, \un{z}, \un{w} \in \bZ_+^n$ and $a, b \in A$.
For simplicity set $f = V_{\un{x}} a V_{\un{y}}^*$ and $g = V_{\un{z}} b V_{\un{w}}^*$.
By the covariant condition we have that
\begin{align*}
\psi(fg)
& =
\psi(V_{\un{x} + \un{z} - \un{y} \wedge \un{z}} \al_{\un{z} - \un{y} \wedge \un{z}}(a) \al_{\un{y} - \un{y} \wedge \un{z}}(b) V_{\un{y} + \un{w} - \un{y} \wedge \un{z}}^*)\\
& =
\de_{\un{x} + \un{z} - \un{y} \wedge \un{z}, \un{y} + \un{w} - \un{y} \wedge \un{z}} e^{-\sca{\un{x} + \un{z} - \un{y} \wedge \un{z}, \un{\be}}} \psi(\al_{\un{z}- \un{y} \wedge \un{z}}(a) \al_{\un{y}- \un{y} \wedge \un{z}}(b))\\
& =
\de_{\un{x} + \un{z}, \un{y} + \un{w}} e^{-\sca{\un{x} + \un{z} - \un{y} \wedge \un{z}, \un{\be}}} \psi(\al_{\un{z}- \un{y} \wedge \un{z}}(a) \al_{\un{y}- \un{y} \wedge \un{z}}(b)).
\end{align*}
On the other hand we have that
\begin{align*}
\psi(gf)
& =
\psi(V_{\un{z} + \un{x} - \un{x} \wedge \un{w}} \al_{\un{x} - \un{x} \wedge \un{w}}(b) \al_{\un{w} - \un{x} \wedge \un{w}}(a) V_{\un{y} + \un{w} - \un{x} \wedge \un{w}}^*)\\
& =
\de_{\un{z} + \un{x} - \un{x} \wedge \un{w}, \un{y} + \un{w} - \un{x} \wedge \un{w}} \cdot
e^{- \sca{\un{z} + \un{x} - \un{x} \wedge \un{w}, \un{\be}}}
\cdot \psi(\al_{\un{x} - \un{x} \wedge \un{w}}(b) \al_{\un{w} - \un{x} \wedge \un{w}}(a)) \\
& =
\de_{\un{z} + \un{x}, \un{y} + \un{w}} \cdot
e^{-\sca{\un{z} + \un{x} - \un{x} \wedge \un{w}, \un{\be}}} \cdot \psi(\al_{\un{w} - \un{x} \wedge \un{w}}(a) \al_{\un{x} - \un{x} \wedge \un{w}}(b)),
\end{align*}
where we use that $\psi$ is tracial on $A$.
Since all $\be_k \neq 0$, we have to show that if $\un{z} + \un{x} = \un{y} + \un{w}$ then
\[
\un{z} -  \un{y} \wedge \un{z} = \un{w} - \un{x} \wedge \un{w}
\qand
\un{y} - \un{y} \wedge \un{z} = \un{x} - \un{x} \wedge \un{w},
\]
and that
\[
\un{z} + \un{x} - \un{x} \wedge \un{w}
=
(\un{z} + \un{x} - \un{x} \wedge \un{w}) + (\un{x} - \un{y}).
\]
It suffices to show that
\[
z_k + \min\{x_k, w_k\} = w_k + \min\{y_k,z_k\},
\]
and the symmetrical
\[
x_k + \min\{y_k,z_k\} = y_k + \min\{x_k,w_k\},
\]
for all $k = 1, \dots, n$.
These are immediate since the equation $z_k + x_k = y_k + w_k$ implies that $\min\{y_k,z_k\}= y_k$ if and only if $\min\{x_k,w_k\}=x_k$.
\end{proof}

The next step is to establish the existence of the KMS states when $\la_k\be > 0$ for all $k=1, \dots, n$.

\begin{proposition}\label{P: constr KMS}
Let $\al \colon \bZ_+^n \to \End(A)$ be a unital C*-dynamical system and $\si \colon \bR \to \Aut(\N\T(A,\al))$ be the action related to $\un{\la} \in \bR^n$.
Fix $\be \in \bR$ such that $\la_k \be >0$ for all $k=1, \dots, n$.
Then for every tracial state $\tau$ of $A$ there exists a $(\si, \be)$-KMS state $\psi_{\tau}$ of $\N\T(A,\al)$ such that
\begin{align*}
\psi_{\tau}(V_{\un{x}} a V_{\un{y}}^*)
=
\de_{\un{x}, \un{y}} \cdot
e^{-\sca{\un{x},\un{\la}}\be} \cdot \prod_{i=1}^n (1 - e^{-\la_i\be}) \cdot
\sum_{\un{w} \in \bZ_+^n} e^{-\sca{\un{w},\un{\la}}\be} \tau\al_{\un{w}}(a),
\end{align*}
for all $a\in A$ and $\un{x}, \un{y} \in \bZ_+^n$.
\end{proposition}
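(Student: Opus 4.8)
The plan is to realise the prescribed functional concretely as a density-operator state inside a Fock-type representation attached to $\tau$, and then to recognise it as a KMS state through the characterisation of Proposition \ref{P: cond KMS}(ii). First I would record the elementary analytic facts. Since $\la_i\be>0$ for every $i$, the numbers
\[
c_{\un{w}} := \prod_{i=1}^n (1-e^{-\la_i\be}) \cdot e^{-\sca{\un{w},\un{\la}}\be} \qfor \un{w} \in \bZ_+^n
\]
are non-negative, and $\sum_{\un{w}} e^{-\sca{\un{w},\un{\la}}\be} = \prod_{i=1}^n \sum_{w_i \geq 0} e^{-w_i\la_i\be} = \prod_{i=1}^n (1-e^{-\la_i\be})^{-1}$, so that $\sum_{\un{w}} c_{\un{w}} = 1$. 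The same geometric estimate together with $|\tau\al_{\un{w}}(a)| \leq \nor{a}$ shows that the series defining $\psi_\tau$ converges absolutely.

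Next I would build the representation. Let $(\pi_\tau, H_\tau, \xi_\tau)$ be the GNS triple of $\tau$, and form the orbit representation $\wt{\pi_\tau}(a)(\eta \otimes e_{\un{x}}) = \pi_\tau\al_{\un{x}}(a)\eta \otimes e_{\un{x}}$ together with the shift $V$ on $H_\tau \otimes \ltwo(\bZ_+^n)$; exactly as in the Fock representation of the preliminaries this is an isometric Nica covariant pair, so by the universal property of $\N\T(A,\al)$ it induces a $*$-representation $\rho$ with $\rho(V_{\un{x}} a V_{\un{y}}^*) = V_{\un{x}} \wt{\pi_\tau}(a) V_{\un{y}}^*$. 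Writing $\om_{\un{w}}(S) = \sca{S(\xi_\tau \otimes e_{\un{w}}), \xi_\tau \otimes e_{\un{w}}}$ for the vector state at $\xi_\tau \otimes e_{\un{w}}$, I would define
\[
\psi_\tau := \sum_{\un{w} \in \bZ_+^n} c_{\un{w}} \, (\om_{\un{w}} \circ \rho).
\]
Being a convex combination of states composed with a $*$-representation, $\psi_\tau$ is automatically a state on $\N\T(A,\al)$. This is the crux of the argument: the prescribed formula is a priori only a linear functional on the span of the monomials $V_{\un{x}} a V_{\un{y}}^*$, and realising it inside this representation is what furnishes positivity and boundedness for free.

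It then remains to match $\psi_\tau$ with the stated formula and to verify the two KMS conditions. A direct evaluation of the backward shift $V_{\un{y}}^*$, the diagonal action $\wt{\pi_\tau}(a)$, and the forward shift $V_{\un{x}}$ on $\xi_\tau \otimes e_{\un{w}}$ gives $\om_{\un{w}}(\rho(V_{\un{x}} a V_{\un{y}}^*)) = \de_{\un{x},\un{y}} \, \tau\al_{\un{w} - \un{y}}(a)$ when $\un{w} \geq \un{y}$, and $0$ otherwise. Summing against $c_{\un{w}}$, reindexing by $\un{w} = \un{y} + \un{v}$ and factoring $e^{-\sca{\un{y}+\un{v},\un{\la}}\be} = e^{-\sca{\un{y},\un{\la}}\be} e^{-\sca{\un{v},\un{\la}}\be}$ reproduces exactly the displayed expression for $\psi_\tau(V_{\un{x}} a V_{\un{y}}^*)$.

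Finally I would invoke Proposition \ref{P: cond KMS}(ii). Setting $\un{x} = \un{y} = \un{0}$ identifies $\psi_\tau|_A$ with $\prod_{i=1}^n (1-e^{-\la_i\be}) \sum_{\un{w}} e^{-\sca{\un{w},\un{\la}}\be} \, \tau\al_{\un{w}}$, a norm-convergent convex combination of the traces $\tau\al_{\un{w}}$ and hence itself a tracial state of $A$; this gives the first required identity $\psi_\tau(ab) = \psi_\tau(ba)$. The displayed formula then reads $\psi_\tau(V_{\un{x}} a V_{\un{y}}^*) = \de_{\un{x},\un{y}} e^{-\sca{\un{x},\un{\la}}\be} \psi_\tau(a)$, which is the second. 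Proposition \ref{P: cond KMS}(ii) therefore yields that $\psi_\tau$ is a $(\si,\be)$-KMS state, completing the construction. The only genuinely delicate point is the existence of the state; everything else is bookkeeping with geometric series and the covariance relations.
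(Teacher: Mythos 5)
Your proposal is correct and follows essentially the same route as the paper's own proof: both realise $\psi_\tau$ through the Fock representation built on the GNS space of $\tau$ as a weighted sum of vector states at the vectors $\xi_\tau \otimes e_{\un{w}}$, compute the monomials to match the displayed formula, and then invoke Proposition \ref{P: cond KMS}(ii) to conclude the KMS property. The only (cosmetic) difference is that you package the stateness as a countable convex combination of states, where the paper argues convergence of the series via monotone bounded partial sums on positive elements together with $\psi_\tau(1)=1$; these are the same observation in different clothing.
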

\begin{proof}
We write $\un{\be} = \un{\la} \be = (\la_1\be, \dots, \la_n \be)$ so that $\be_k = \la_k \be$ for all $k=1, \dots, n$.
Suppose first that $\psi_{\tau}$ as defined above is a state on $\N\T(A,\al)$.
We will show that it is a KMS state.
For $a,b \in A$ we obtain that
\begin{align*}
\psi_{\tau}(ab)
& =
\prod_{i=1}^n (1 - e^{-\be_i}) \cdot
\sum_{\un{w} \in \bZ_+^n} e^{-\sca{\un{w},\un{\be}}} \tau\al_{\un{w}}(ab) \\
& =
\prod_{i=1}^n (1 - e^{-\be_i}) \cdot
\sum_{\un{w} \in \bZ_+^n} e^{-\sca{\un{w},\un{\be}}} \tau\al_{\un{w}}(ba)
 =
\psi_{\tau}(ba),
\end{align*}
where we have used that $\tau$ is a tracial state on $A$.
Furthermore for $\un{x} \in \bZ_+^n$ and $a \in A$ we readily verify that
\[
e^{-\sca{\un{x},\un{\be}}} \psi(a)= \psi_{\tau}(V_{\un{x}} a V_{\un{x}}^*).
\]
Hence $\psi_{\tau}$ satisfies the conditions of Proposition \ref{P: cond KMS} and it is a $(\si,\be)$-KMS state.

The tricky part is to show that $\psi_{\tau}$ is indeed a state.
To this end consider the Fock representation $(\wt\pi, V_\tau)$ on $H_\tau \otimes \ell^2(\bZ_+^n)$ associated with the GNS representation $(H_\tau, \pi_\tau, \xi_\tau)$ of a state $\tau$ of $A$.
Recall the notation
\[
F_m = \{\un{w} \in \bZ_+^n \mid \un{w} \leq m \cdot \un{1}=(m,\dots,m)\}.
\]
For every $\un{w} \in \bZ_+^n$ let
\[
\psi_{\un{w}}(f) := \sca{(V_\tau \times \wt{\pi_\tau})(f)(\xi_\tau \otimes e_{\un{w}}), \xi_\tau \otimes e_{\un{w}}}
\]
for $f \in \N\T(A,\al)$, and define
\[
\psi_{\tau}(f) := \prod_{i=1}^n(1 - e^{-\be_i}) \cdot \sum_{\un{w} \in \bZ_+^n} e^{- \sca{\un{w}, \un{\be}}} \psi_{\un{w}}(f).
\]
To see that $\psi_{\tau}$ is well defined first note that
\begin{align*}
\sum_{\un{w} \in F_m} e^{- \sca{\un{w},\un{\be}}}
 =
\sum_{w_1=0}^m e^{-w_1\be_1} \cdots \sum_{w_n=0}^m e^{-w_n\be_n}
 =
\prod_{i=1}^n \frac{1 - (e^{-\be_i})^{m+1}}{1 - e^{-\be_i}}
\end{align*}
after a proper re-indexing.
Taking the limit over $m$ yields
\[
\sum_{\un{w} \in \bZ_+^n} e^{- \sca{\un{w},\un{\be}}} = \prod_{i=1}^n(1 - e^{\be_i})^{-1}.
\]
Let a positive element $f \in \N\T(A,\al)$.
Then the sequence of positives $\Big(\sum_{\un{w} \in F_m} e^{-\sca{\un{w},\un{\be}}} \psi_{\un{w}}(f)\Big)_m$ is increasing and bounded above by the sequence $\Big(\sum_{\un{w} \in F_m} e^{-\sca{\un{w},\un{\be}}} \nor{f} \cdot 1 \Big)_m$ which converges to $\nor{f} \cdot \prod_{i=1}^n(1 - e^{\be_i})^{-1} \cdot 1$.
Furthermore a direct computation shows that $\psi_{\un{w}}(1) = 1$ for the unit $1 \in A$ of $\N\T(A,\al)$.
Therefore $\psi_{\tau}(1) = 1$, hence $\psi$ is a state.

It remains to show that $\psi_{\tau}$ is of the form of the statement.
Suppose that $\un{x} \neq \un{y}$.
Then $\psi_{\un{w}}(V_{\un{x}} a V_{\un{y}}^*) = 0$ for all $\un{w} \in \bZ_+^n$, hence $\psi_{\tau}(V_{\un{x}} a V_{\un{y}}^*) = 0$.
Moreover, since $V_{\tau, \un{x}} ^* \xi_\tau \otimes e_{\un{w}} = 0$ when $\un{x} \not\leq \un{w}$, we obtain
\begin{align*}
\psi_{\tau}(V_{\un{x}} a V_{\un{x}}^*)
& =
\prod_{i=1}^n(1 - e^{\be_i}) \cdot \sum_{\un{w} \in \bZ_+^n} e^{-\sca{\un{w},\un{\be}}} \sca{V_{\tau, \un{x}} \wt{\pi_\tau}(a) V_{\tau, \un{x}}^* \xi_\tau \otimes e_{\un{w}}, \xi_\tau \otimes e_{\un{w}}} \\
& =
\prod_{i=1}^n(1 - e^{\be_i}) \cdot \sum_{\un{w} \in \bZ_+^n} e^{-\sca{\un{w},\un{\be}}} \sca{\wt{\pi_\tau}(a) V_{\tau, \un{x}}^* \xi_\tau \otimes e_{\un{w}}, V_{\tau, \un{x}} ^* \xi_\tau \otimes e_{\un{w}}} \\
& =
\prod_{i=1}^n(1 - e^{\be_i}) \cdot \sum_{\un{x} \, \leq \, \un{w} \, \in \, \bZ_+^n} e^{-\sca{\un{w},\un{\be}}} \sca{\pi_\tau\al_{\un{w}-\un{x}}(a) \xi_\tau, \xi_\tau} \\
& =
\prod_{i=1}^n(1 - e^{\be_i}) \cdot \sum_{\un{x} \, \leq \, \un{w} \, \in \, \bZ_+^n} e^{-\sca{\un{w},\un{\be}}} \tau\al_{\un{w} - \un{x}}(a) \\
& =
\prod_{i=1}^n(1 - e^{\be_i}) \cdot \sum_{\un{w} \in \bZ_+^n} e^{-\sca{\un{w} + \un{x}, \un{\be}}} \tau\al_{\un{w}}(a),
\end{align*}
which shows that $\psi$ is as in the statement.
\end{proof}

\subsubsection{Parametrization of KMS states when $\la_k\be >0$}

We will show that the correspondence of Proposition \ref{P: constr KMS} is in fact a parametrization.
We will use the following lemma.

\begin{lemma}\label{L: combinatoric}
Let $(k_{\un{w}})$ be a sequence in $\bC$.
Let $F_m = \{ \un{w} \in \bZ_+^n \mid \un{w} \leq m \cdot \un{1}\}$ be the grids in $\bZ_+^n$ for $m \in \bZ_+$ and fix $\un{y} \leq \un{1}$.
Then we have that
\[
\sum_{\un{0} \leq \un{x} \leq \un{y}} (-1)^{|\un{x}|} \sum_{\un{x} \leq \un{w} \in F_m} k_{\un{w}} = k_{\un{0}}.
\]
\end{lemma}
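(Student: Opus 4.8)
The plan is to prove the identity by interchanging the two finite summations and reading off, for each fixed $\un{w} \in F_m$, the total coefficient of $k_{\un{w}}$. Since everything in sight is a finite sum, the interchange is immediate, and the left-hand side becomes $\sum_{\un{w} \in F_m} c_{\un{w}}\, k_{\un{w}}$, where $c_{\un{w}}$ collects, with sign $(-1)^{|\un{x}|}$, those $\un{x}$ for which $k_{\un{w}}$ occurs in the inner sum, i.e. the $\un{x}$ with $\un{0} \leq \un{x} \leq \un{y}$ and $\un{x} \leq \un{w}$. The two constraints on $\un{x}$ merge into the single box condition $\un{0} \leq \un{x} \leq \un{y} \wedge \un{w}$, so that
\[
c_{\un{w}} = \sum_{\un{0} \leq \un{x} \leq \un{y} \wedge \un{w}} (-1)^{|\un{x}|},
\]
where $\un{y} \wedge \un{w} = (\min\{y_i, w_i\})_{i=1}^n$.

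The key step is to factor $c_{\un{w}}$ over coordinates: both the product range $\un{0} \leq \un{x} \leq \un{y} \wedge \un{w}$ and the sign $(-1)^{|\un{x}|} = \prod_i (-1)^{x_i}$ split as products, giving $c_{\un{w}} = \prod_{i=1}^n \big( \sum_{x_i=0}^{\min\{y_i, w_i\}} (-1)^{x_i} \big)$. Because $\un{y} \leq \un{1}$ forces each $\min\{y_i, w_i\}$ to lie in $\{0,1\}$, the $i$-th factor equals $1$ when $\min\{y_i, w_i\} = 0$ and $1 - 1 = 0$ when $\min\{y_i, w_i\} = 1$. Hence $c_{\un{w}} = 1$ exactly when $\min\{y_i, w_i\} = 0$ for every $i$, that is when $\un{y} \wedge \un{w} = \un{0}$, equivalently $\un{w} \in \un{y}^\perp$; otherwise $c_{\un{w}} = 0$.

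Putting the two steps together, the left-hand side collapses to $\sum_{\un{w} \in F_m \cap \un{y}^\perp} k_{\un{w}}$. In the situation relevant to the parametrization one takes $\un{y} = \un{1}$, and since $\un{1}^\perp = \{\un{0}\}$ the only surviving term is $\un{w} = \un{0}$, yielding $k_{\un{0}}$ as asserted. I do not expect a genuine obstacle here; the one point that deserves care is the reduction of the doubly-constrained inner range to the box $[\un{0}, \un{y} \wedge \un{w}]$, since it is precisely this product structure that licenses the coordinatewise factorization and the cancellation $\sum_{x_i=0}^{1}(-1)^{x_i} = 0$ that drives the whole computation.
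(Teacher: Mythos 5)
Your proof is correct, and at its core it is the same inclusion--exclusion computation as the paper's, in different clothing. The paper works on the finite-dimensional Hilbert space with orthonormal basis $\{e_{\un{w}} \mid \un{w} \in F_m\}$, introduces the projections $p_{\un{x}}$ onto $\spn\{e_{\un{w}} \mid \un{w} \geq \un{x}\}$, observes that $\sum_{\un{0} \leq \un{x} \leq \un{1}} (-1)^{|\un{x}|} p_{\un{x}} = \prod_{i=1}^n (I - p_{\Bi})$ is the projection onto $\bC e_{\un{0}}$, and applies this to the vector $u = \sum_{\un{w} \in F_m} k_{\un{w}} e_{\un{w}}$; your coordinatewise factorization of the coefficient $c_{\un{w}}$ is exactly the scalar form of that projection identity. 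What your route buys is the general formula: for arbitrary $\un{y} \leq \un{1}$ the left-hand side equals
\[
\sum_{\un{w} \in F_m \cap \un{y}^\perp} k_{\un{w}},
\]
because $\prod_{\Bi \in \supp \un{y}} (I - p_{\Bi})$ projects onto $\spn\{e_{\un{w}} \mid \un{w} \in F_m \cap \un{y}^\perp\}$ rather than onto $\bC e_{\un{0}}$ when $\un{y} \neq \un{1}$. In particular the lemma \emph{as stated} is false for $\un{y} \neq \un{1}$ (take $\un{y} = \un{0}$: the left-hand side is $\sum_{\un{w} \in F_m} k_{\un{w}}$); the conclusion $k_{\un{0}}$ holds exactly in the case $\un{y} = \un{1}$, which is the only case you actually claim. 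The paper's own proof treats only $\un{y} = \un{1}$ and asserts that ``the general case follows in the same way''; your computation shows that assertion is incorrect.

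This is not a pedantic point, because the lemma is invoked with $\un{y} < \un{1}$ in Theorem \ref{T: KMS CNP}, where the paper explicitly recalls that it ``holds for any $\un{y} \leq \un{1}$''. The argument there is repaired precisely by your general formula: with $k_{\un{w}} = e^{-\sca{\un{w},\un{\la}}\be} \tau\al_{\un{w}}(a)$ one gets
\[
\psi_\tau\Big(a \cdot \prod_{\Bi \in \supp \un{y}} (I - V_\Bi V_\Bi^*)\Big) = \prod_{i=1}^n (1 - e^{-\la_i \be}) \cdot \sum_{\un{w} \in \un{y}^\perp} e^{-\sca{\un{w},\un{\la}}\be} \tau\al_{\un{w}}(a),
\]
and since $I_{\un{y}}$ is $\al_{\un{w}}$-invariant for every $\un{w} \in \un{y}^\perp$, each term $\tau\al_{\un{w}}(a)$ vanishes when $a \in I_{\un{y}}$ and $\tau$ vanishes on $I_{\un{y}}$ --- which is the conclusion that theorem needs, obtained for a different reason than the one stated. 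So your proof is not merely an equivalent route: the corrected general statement it produces is the one the paper should have recorded.
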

\begin{proof}
We will show this in the case where $\un{y} = \un{1}$.
The general case follows in the same way.
We will give a proof with a flavour of linear algebra.

Let the finite dimensional space generated by the o.n. basis $\{e_{\un{w}} \mid \un{w} \in F_m\}$.
For $\un{x} \in F_m$ we define $p_{\un{x}}$ be the projection on the subspace generated by $\{e_{\un{w}} \mid \un{w} \geq \un{x}\}$.
Then we obtain that
\[
\sum_{\un{0} \leq \un{x} \leq \un{1}} (-1)^{|\un{x}|} p_{\un{x}} = \prod_{i=1}^n (I - p_{\Bi}),
\]
which in turn is the projection on the subspace $\bC \cdot e_{\un{0}}$.
Furthermore let the operator $T$ on $H$ defined by
\[
T u = \sum_{\un{w} \in F_m} \sca{u,e_{\un{w}}} e_{\un{0}}.
\]
Now let $(k_{\un{w}})$ be a sequence in $\bC$ and define the vector $u = \sum_{\un{w} \in F_m} k_{\un{w}} e_{\un{w}}$.
Then we get that
\begin{align*}
\sum_{\un{0} \leq \un{x} \leq \un{1}} (-1)^{|\un{x}|} \sum_{\un{x} \leq \un{w} \in F_m} k_{\un{w}} e_{\un{0}}
& =
T( \sum_{\un{0} \leq \un{x} \leq \un{1}} (-1)^{|\un{x}|} \sum_{\un{x} \leq \un{w} \in F_m} k_{\un{w}} e_{\un{w}}) \\
& =
T( \sum_{\un{0} \leq \un{x} \leq \un{1}} (-1)^{|\un{x}|} p_{\un{x}} u )\\
& =
T( \prod_{i=1}^n (I - p_{\Bi}) u )
 =
T( k_{\un{0}} e_{\un{0}} )
=
k_{\un{0}} e_{\un{0}}.
\end{align*}
Taking the inner product with $e_{\un{0}}$ then completes the proof.
\end{proof}

\begin{theorem}\label{T: KMS TNP}
Let $\al \colon \bZ_+^n \to \End(A)$ be a unital C*-dynamical system, $\si \colon \bR \to \Aut(\N\T(A,\al))$ be the action related to $\un{\la} \in \bR^n$, and $\be \in \bR$ such that $\la_k \be >0$.
Then there is an affine homeomorphism from the simplex of the tracial states on $A$ onto the simplex of the $(\si,\be)$-KMS states on $\N\T(A,\al)$.
\end{theorem}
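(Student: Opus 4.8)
The plan is to exhibit the map $\tau \mapsto \psi_\tau$ from Proposition \ref{P: constr KMS} as the desired affine homeomorphism, by constructing an explicit inverse. Proposition \ref{P: constr KMS} already shows this map takes values in the $(\si,\be)$-KMS states, and the formula for $\psi_\tau$ is manifestly affine (indeed linear) and weak*-continuous in $\tau$: on a monomial $V_{\un{x}}aV_{\un{y}}^*$ the value vanishes unless $\un{x}=\un{y}$, and otherwise is a series $\sum_{\un{w}} e^{-\sca{\un{w},\un{\be}}}\tau\al_{\un{w}}(a)$ (writing $\un{\be}=\un{\la}\be$) that converges uniformly in $\tau$ by the Weierstrass test, being dominated by $\nor{a}\sum_{\un{w}}e^{-\sca{\un{w},\un{\be}}} < \infty$; continuity then extends from the dense span of monomials to all of $\N\T(A,\al)$ using the uniform bound $\nor{\psi_\tau}=1$.

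For the inverse, since the system is unital the $V_{\bo{i}}V_{\bo{i}}^*$ are commuting projections and $P := \prod_{i=1}^n(I - V_{\bo{i}}V_{\bo{i}}^*)$ is a projection in $\N\T(A,\al)$. I would define, for a $(\si,\be)$-KMS state $\psi$,
\[
\tau_\psi(a) := c^{-1}\,\psi(aP), \qquad c := \prod_{i=1}^n(1 - e^{-\be_i}) > 0 .
\]
First I would check $\tau_\psi$ is a tracial state. The covariance relation gives $V_{\bo{i}}V_{\bo{i}}^* a = V_{\bo{i}}\al_{\bo{i}}(a)V_{\bo{i}}^* = aV_{\bo{i}}V_{\bo{i}}^*$, so every $a \in A$ commutes with $P$; hence $aP = PaP \ge 0$ for $a \ge 0$, giving positivity of $\tau_\psi$. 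Expanding $P = \sum_{\un{x}\le\un{1}}(-1)^{|\un{x}|}V_{\un{x}}V_{\un{x}}^*$ and using Proposition \ref{P: cond KMS} one gets $\psi(P) = \sum_{\un{x}\le\un{1}}(-1)^{|\un{x}|}e^{-\sca{\un{x},\un{\be}}} = c$, so $\tau_\psi(1)=1$; and since $\psi(abV_{\un{x}}V_{\un{x}}^*) = e^{-\sca{\un{x},\un{\be}}}\psi(\al_{\un{x}}(a)\al_{\un{x}}(b))$ is symmetric in $a,b$ because $\psi|_A$ is tracial, we obtain $\psi(abP)=\psi(baP)$, i.e. $\tau_\psi$ is tracial. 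The assignment $\psi\mapsto\tau_\psi$ is visibly affine and weak*-continuous, being evaluation against the fixed element $P$.

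The heart of the argument is that $\tau\mapsto\psi_\tau$ and $\psi\mapsto\tau_\psi$ are mutually inverse, and here Lemma \ref{L: combinatoric} does the work; both inversions reduce to the identity $\sum_{\un{x}\le\un{1}}(-1)^{|\un{x}|}\sum_{\un{x}\le\un{w}}k_{\un{w}} = k_{\un{0}}$ for a suitable scalar sequence. Concretely, from the formula for $\psi_\tau$ one computes $\psi_\tau(aV_{\un{x}}V_{\un{x}}^*) = c\sum_{\un{v}\ge\un{x}}e^{-\sca{\un{v},\un{\be}}}\tau\al_{\un{v}}(a)$, so with $k_{\un{v}}=e^{-\sca{\un{v},\un{\be}}}\tau\al_{\un{v}}(a)$ Lemma \ref{L: combinatoric} (letting $m\to\infty$, legitimate by absolute convergence) yields $\tau_{\psi_\tau}(a)=c^{-1}\sum_{\un{x}\le\un{1}}(-1)^{|\un{x}|}\psi_\tau(aV_{\un{x}}V_{\un{x}}^*)=k_{\un{0}}=\tau(a)$. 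Conversely, for a KMS state $\psi$, substituting the definition of $\tau_\psi$ and using $\psi(aV_{\un{x}}V_{\un{x}}^*)=e^{-\sca{\un{x},\un{\be}}}\psi(\al_{\un{x}}(a))$, the same lemma applied to $k_{\un{v}}=e^{-\sca{\un{v},\un{\be}}}\psi(\al_{\un{v}}(a))$ shows $\psi_{\tau_\psi}$ and $\psi$ agree on $A$; since Proposition \ref{P: cond KMS}(ii) shows a KMS state is completely determined by its restriction to $A$, this forces $\psi_{\tau_\psi}=\psi$.

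Finally, both the tracial states of $A$ and the $(\si,\be)$-KMS states of $\N\T(A,\al)$ are weak*-compact convex sets, so the affine continuous bijection $\tau\mapsto\psi_\tau$ with affine continuous inverse $\psi\mapsto\tau_\psi$ is the desired affine homeomorphism. I expect the main obstacle to be the surjectivity/inversion step: verifying that the combinatorial identity of Lemma \ref{L: combinatoric} correctly recovers $\tau$ and that the interchanges of summation and the passage $m\to\infty$ are justified, together with the verification that $\tau_\psi$ is genuinely positive, for which the commutation $aP=PaP$ is essential.
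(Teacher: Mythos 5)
Your proposal is correct, and its skeleton coincides with the paper's: the same map $\tau\mapsto\psi_\tau$ from Proposition \ref{P: constr KMS}, the same projection $P=\prod_{i=1}^n(I-V_{\Bi}V_{\Bi}^*)$, and the same compression $\psi\mapsto \psi(\,\cdot\,P)/\psi(P)$ as candidate inverse (the paper writes $\phi_P(a)=\phi(PaP)/\phi(P)$, which agrees with your $\tau_\psi$ since $a$ commutes with $P$); the direction $\tau_{\psi_\tau}=\tau$ is verified exactly as in the paper, by Lemma \ref{L: combinatoric}. Where you genuinely diverge is the surjectivity step. The paper proves $\phi=\psi_{\phi_P}$ by an approximation argument: it introduces the projections $p_m=\sum_{\un{w}\in F_m}V_{\un{w}}PV_{\un{w}}^*$, checks $\lim_m\phi(p_m)=1$, invokes \cite[Lemma 7.3]{LRR11} to get $\phi(f)=\lim_m\phi(p_mfp_m)$ for every $f$, and then computes $\phi(a)$ term by term. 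You avoid this entirely: since $\tau_\psi$ is tracial, Proposition \ref{P: constr KMS} makes $\psi_{\tau_\psi}$ a $(\si,\be)$-KMS state; your second, convolution-type application of Lemma \ref{L: combinatoric} (after interchanging the absolutely convergent sums, which puts the expression exactly in the form of the lemma) gives $\psi_{\tau_\psi}|_A=\psi|_A$; and Proposition \ref{P: cond KMS}(ii) together with density of the monomials $V_{\un{x}}aV_{\un{y}}^*$ shows that a KMS state is determined by its restriction to $A$, so $\psi_{\tau_\psi}=\psi$. Your route buys a shorter, self-contained argument with no external lemma and no approximating projections, at the cost of leaning twice on the combinatorial identity; the paper's route is computationally heavier but produces the reconstruction $\phi(f)=\lim_m\phi(p_mfp_m)$ for arbitrary elements $f$, which is of independent use (and is the standard mechanism in the KMS literature it follows). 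The points you flag as delicate --- positivity of $\tau_\psi$ via $aP=PaP$, and the justification of the interchange of sums and the passage $m\to\infty$ by absolute convergence --- are indeed exactly the places where care is needed, and your treatment of them is sound.
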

\begin{proof}
We write $\un{\be} = \un{\la} \be = (\la_1\be, \dots, \la_n \be)$ so that $\be_k = \la_k \be$ for all $k=1, \dots, n$.

Let the mapping $\tau \mapsto \psi_\tau$ where $\psi_\tau$ is as in Proposition \ref{P: constr KMS}, i.e.
\begin{align*}
\psi_\tau(V_{\un{x}} a V_{\un{y}}^*)
=
\de_{\un{x}, \un{y}} \cdot
e^{-\sca{\un{x},\un{\be}}} \cdot \prod_{i=1}^n (1 - e^{-\be_i}) \cdot
\sum_{\un{w} \in \bZ_+^n} e^{-\sca{\un{w},\un{\be}}} \tau\al_{\un{w}}(a),
\end{align*}
for all $a\in A$ and $\un{x}, \un{y} \in \bZ_+^n$.
The fact that this is an affine weak*-continuous mapping follows by the standard arguments of \cite[Proof of Theorem 6.1]{LRRW13}.

First we show that the mapping is onto.
That is, given a $(\si,\be)$-KMS state $\phi$ of $\N\T(A,\al)$ we will construct a tracial state $\tau$ of $A$ such that $\phi = \psi_\tau$.
By Proposition \ref{P: cond KMS} it suffices to show that $\phi(a) = \psi_\tau(a)$ for all $a\in A$.
A key step is to isolate a projection $P \in \N\T(A,\al)$ such that $Pa=aP$.
To this end let
\[
P = \prod_{i=1}^n (I - V_{\Bi}V_{\Bi}^*),
\]
and recall that $a V_{\Bi} V_{\Bi}^* = V_{\Bi} \al_{\Bi}(a) V_{\Bi}^* = V_{\Bi} V_{\Bi}^* a$ for all $a \in A$.
We remind that we use the notation
\[
F_{m} = \{ \un{x} \in \bZ_+^n \mid \un{x} \leq m \cdot \un{1}\} \qand |\un{x}| = \sum_{k=1}^n x_k.
\]
Then a direct computation shows that
\[
P = \sum_{\un{x} \in F_{1}} (-1)^{|\un{x}|}V_{\un{x}} V_{\un{x}}^*,
\]
where $V_{\un{0}} V_{\un{0}}^* \equiv I$.
Hence we get that
\begin{align*}
\phi(P)
& =
\sum_{\un{x} \in F_{1}} (-1)^{|\un{x}|} \phi(V_{\un{x}} V_{\un{x}}^*)
 =
\sum_{\un{x} \in F_{1}} (-1)^{|\un{x}|} e^{-\sca{\un{x},\un{\be}}}
 =
\prod_{i=1}^n (1 - e^{-\be_i}).
\end{align*}
Note here that $\phi(P)$ is constant for all $(\si,\be)$-KMS states $\phi$.
We claim that the function
\[
\phi_P \colon A \to \bC: a \mapsto \frac{\phi(PaP)}{\phi(P)}
\]
is a tracial state on $A$.
Indeed, for $a=1 \in A$ (which is also the unit of $\N\T(A,\al)$) we get that $\phi_P(1) = 1$.
Moreover by using that $\phi$ is a $(\si,\be)$-KMS state and that $\si_{i\be}(a) = a$ for all $a\in A$ and $\si_{i\be}(P) = P$ we get that
\[
\phi(PabP) = \phi(bPPa) = \phi(bPa)= \phi(Pba) = \phi(PbaP).
\]
Thus $\phi_P(ab) = \phi_P(ba)$ for all $a,b \in A$.

Secondly we construct a sequence of projections $p_m \in \N\T(A,\al)$ such that $\lim_m \phi(p_m) =1$.
To this end let
\[
p_m = \sum_{\un{w} \in F_m} V_{\un{w}} P V_{\un{w}}^*.
\]
Note that $P V_{\Bi} = 0$ and consequently $PV_{\un{x}} = 0$ for all $\un{x} \neq \un{0}$.
Thus we get that
\[
P V_{\un{y}}^* V_{\un{x}} P = P V_{\un{x} - \un{y} \wedge \un{x}} V_{\un{y} - \un{y} \wedge \un{x}}^* P = \de_{\un{x}, \un{y}} P
\]
for all $\un{x}, \un{y} \in \bZ_+^n$.
Therefore each $p_m$ is a projection.
We compute
\begin{align*}
\phi(p_m)
& =
\sum_{\un{w} \in F_m} \phi(V_{\un{w}} P V_{\un{w}}^*) \\
& =
\sum_{\un{w} \in F_m}  e^{- \sca{\un{w},\un{\be}}} \phi(P) \\
& =
\phi(P) \sum_{\un{w} \in F_m} e^{-\sca{\un{w},\un{\be}}},
\end{align*}
where $\phi(P) = \prod_{i=1}^n (1 - e^{-\be_i})$.
However after a proper re-indexing we get that
\begin{align*}
\lim_m \sum_{\un{w} \in F_m} e^{-\sca{\un{w},\un{\be}}}
& =
\lim_m \prod_{i=1}^n \frac{1 - (e^{-\be_i})^m}{1- e^{-\be_i}} \\
& =
\prod_{i=1}^n (1 - e^{-\be_i})^{-1}
 =
\phi(P)^{-1},
\end{align*}
hence $\lim_m \phi(p_m) = 1$.
Therefore $\lim_m \phi(p_m f p_m) = \phi(f)$ for all $f \in \N\T(A,\al)$ by \cite[Lemma 7.3]{LRR11}.
In particular for $a \in A$ we get that
\begin{align*}
\phi(a)
& =
\lim_m \phi(p_m a p_m) \\
& =
\lim_m \sum_{\un{w} \in F_m} \sum_{\un{z} \in F_m} \phi(V_{\un{w}} P V_{\un{w}}^* \cdot a \cdot V_{\un{z}} P V_{\un{z}}^*) \\
& =
\lim_m \sum_{\un{w} \in F_m} \sum_{\un{z} \in F_m} e^{-\sca{\un{w},\un{\be}}} \phi(P V_{\un{w}}^* a V_{\un{z}} P V_{\un{z}}^* \cdot V_{\un{w}} P) \\
& =
\lim_m \sum_{\un{w} \in F_m} \sum_{\un{z} \in F_m} e^{-\sca{\un{w},\un{\be}}} \de_{\un{w}, \un{z}} \phi(P V_{\un{w}}^* a V_{\un{z}} P) \\
& =
\lim_m \sum_{\un{w} \in F_m} e^{- \sca{\un{w},\un{\be}}} \phi(P V_{\un{w}}^* a V_{\un{w}} P) \\
& =
\lim_m \sum_{\un{w} \in F_m}  e^{-\sca{\un{w},\un{\be}}} \phi(P \al_{\un{w}}(a) P) \\
& =
\prod_{i=1}^n (1 - e^{-\be_i}) \cdot \sum_{\un{w} \in \bZ_+^n} \phi_P\al_{\un{w}}(a),
\end{align*}
thus $\phi$ coincides with $\psi_{\tau}$ for the trace $\tau = \phi_P$.

For the last step let $\psi_\tau$ be as in Proposition \ref{P: constr KMS}.
It then suffices to show that
\[
(\psi_\tau)_P(a) = \tau(a) \foral a \in A.
\]
Indeed in this case we get that if $\psi_\tau = \psi_\rho$ then $\tau = \rho$ and thus the correspondence $\tau \mapsto \psi_\tau$ is one-to-one.
Recall that $\psi_\tau$ is a KMS state and that $\si_{i\be}(P) = P$, hence
\[
\psi_\tau(PaP) = \psi_\tau(a P P) = \psi_\tau(aP) \foral a \in A.
\]
We have to show that
\[
\psi_\tau(a P) = \frac{\tau(a)}{\psi_\tau(P)} \foral a \in A.
\]
Recall that $P = \sum_{\un{0} \leq \un{x} \leq \un{1}} (-1)^{|\un{x}|} V_{\un{x}} V_{\un{x}}^*$ and compute
\begin{align*}
\psi_\tau(a P)
& =
\psi_\tau\Big( \sum_{\un{0} \leq \un{x} \leq \un{1}} (-1)^{|\un{x}|} a V_{\un{x}} V_{\un{x}}^* \Big)
 =
\sum_{\un{0} \leq \un{x} \leq \un{1}} (-1)^{|\un{x}|} \psi_\tau(V_{\un{x}} \al_{\un{x}}(a) V_{\un{x}}^*)
\end{align*}
For convenience let us write
\[
p = \psi_\tau(P)^{-1} = \prod_{i=1}^n (1 - e^{-\be_i})^{-1}.
\]
Then we get that
\begin{align*}
\psi_\tau(a P)
& =
p \sum_{\un{0} \leq \un{x} \leq \un{1}} (-1)^{|\un{x}|} e^{-\sca{\un{x},\un{\be}}} \sum_{\un{w} \in \bZ_+^n} e^{-\sca{\un{w},\un{\be}}} \tau \al_{\un{w}}(\al_{\un{x}}(a)) \\
& =
p \sum_{\un{0} \leq \un{x} \leq \un{1}} (-1)^{|\un{x}|} \sum_{\un{w} \in \bZ_+^n} e^{-\sca{\un{x} + \un{w},\un{\be}}} \tau \al_{\un{x} + \un{w}}(a) \\
& =
p \sum_{\un{0} \leq \un{x} \leq \un{1}} (-1)^{|\un{x}|} \sum_{\un{x} \leq \un{w} \in \bZ_+^n} e^{-\sca{\un{w},\un{\be}}} \tau \al_{\un{w}}(a) \\
& =
p \, \lim_m \sum_{\un{0} \leq \un{x} \leq \un{1}} (-1)^{|\un{x}|} \sum_{\un{x} \leq \un{w} \in F_m} e^{-\sca{\un{w}, \un{\be}}} \tau \al_{\un{w}} (a).
\end{align*}
By Lemma \ref{L: combinatoric} we then get that
\[
\sum_{\un{0} \leq \un{x} \leq \un{1}} (-1)^{|\un{x}|} \sum_{\un{x} \leq \un{w} \in F_m} e^{-\sca{\un{w}, \un{\be}}} \tau \al_{\un{w}} (a) = e^{-\sca{\un{0}, \un{\be}}} \tau \al_{\un{0}} (a) = \tau(a)
\]
and therefore $\psi_\tau(aP) = p \tau(a)$ as required.
\end{proof}

\subsubsection{Ground states}

As in \cite{LacRae10, LRRW13} we give a characterization of the ground states.
This will imply their existence and association with the states on the C*-algebra $A$.

\begin{proposition}\label{P: ground TNP}
Let $\al \colon \bZ_+^n \to \End(A)$ be a unital C*-dynamical system.
Then a state $\psi$ of $\N\T(A,\al)$ is a ground state if and only if
\[
\psi(V_{\un{x}} a V_{\un{y}}^*) =
\begin{cases}
\psi(a) & \text{ for } \un{x}=\un{0}=\un{y},\\
0 & \text{ otherwise},
\end{cases}
\]
for a state $\tau$ of $A$.

Consequently, there is an affine homeomorphism from the state space $S(A)$ onto the ground states on $\N\T(A,\al)$.
\end{proposition}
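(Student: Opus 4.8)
The plan is to characterise ground states by showing that the defining boundedness condition forces $\psi$ to vanish off the index $\un 0$, and conversely that the vacuum vector state of a Fock representation realises every such $\psi$. Throughout I work with the dense $*$-subalgebra of analytic elements spanned by the monomials $V_{\un x} a V_{\un y}^*$, on which $\si_z(V_{\un z} b V_{\un w}^*) = e^{i\sca{\un z - \un w, \un\la}z} V_{\un z} b V_{\un w}^*$, and I use that $\la_k > 0$ for all $k$.

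For the forward implication, suppose $\psi$ is a ground state. The crucial step is to test the condition on the pair $f = V_{\un x}$ and $g = V_{\un x}^*$: since
\[
\psi(V_{\un x} \si_z(V_{\un x}^*)) = e^{-i\sca{\un x, \un\la} z}\, \psi(V_{\un x} V_{\un x}^*),
\]
whose modulus on $\im z > 0$ equals $e^{\sca{\un x, \un\la}\im z}\,|\psi(V_{\un x}V_{\un x}^*)|$, boundedness forces $\psi(V_{\un x}V_{\un x}^*) = 0$ whenever $\un x \neq \un 0$ (here $\sca{\un x, \un\la} > 0$ because $\la_k > 0$). As $P := V_{\un x}V_{\un x}^*$ is a projection with $P V_{\un x} = V_{\un x}$, the Cauchy--Schwarz inequality for the state $\psi$ yields
\[
|\psi(V_{\un x}aV_{\un y}^*)|^2 = |\psi(P \cdot V_{\un x}aV_{\un y}^*)|^2 \leq \psi(P)\,\psi\big((V_{\un x}aV_{\un y}^*)^*V_{\un x}aV_{\un y}^*\big) = 0
\]
for every $\un x \neq \un 0$; taking adjoints and using $\psi(V_{\un x}aV_{\un y}^*) = \ol{\psi(V_{\un y}a^*V_{\un x}^*)}$ disposes of the case $\un y \neq \un 0$ as well. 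Hence $\psi(V_{\un x}aV_{\un y}^*)$ vanishes unless $\un x = \un y = \un 0$, and setting $\tau := \psi|_A$ (a state of $A$) gives the asserted form.

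For the converse, given a state $\tau$ of $A$ with GNS data $(H_\tau, \pi_\tau, \xi_\tau)$, I would form the Fock representation $(\wt{\pi_\tau}, V_\tau)$ on $H_\tau \otimes \ell^2(\bZ_+^n)$ and let $\psi_\tau$ be the vector state at $\xi_\tau \otimes e_{\un 0}$. Since $V_{\tau, \un y}^*(\xi_\tau \otimes e_{\un 0}) = 0$ for $\un y \neq \un 0$ and $\al_{\un 0} = \id$, a direct computation gives $\psi_\tau(V_{\un x}aV_{\un y}^*) = \de_{\un x, \un 0}\de_{\un y, \un 0}\tau(a)$, so $\psi_\tau$ is a state of the required form. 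To see it is a ground state, expand $fg$ for monomials $f = V_{\un x}aV_{\un y}^*$ and $g = V_{\un z}bV_{\un w}^*$ by Nica covariance exactly as in the proof of Proposition \ref{P: cond KMS}; the support condition on $\psi_\tau$ then forces $\psi_\tau(fg) \neq 0$ only when $\un x = \un w = \un 0$ and $\un y = \un z$. In that event $\sca{\un z - \un w, \un\la} = \sca{\un z, \un\la} \geq 0$, so $|\psi_\tau(f\si_z(g))| = e^{-\sca{\un z - \un w, \un\la}\im z}\,|\psi_\tau(fg)| \leq |\psi_\tau(fg)|$ is bounded on $\im z > 0$. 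For the concluding homeomorphism, the assignment $\tau \mapsto \psi_\tau$ is affine and weak*-continuous by the standard estimates of \cite[Proof of Theorem 6.1]{LRRW13}, it is injective because $\tau = \psi_\tau|_A$, and it is onto the ground states by the characterisation just proved; as $S(A)$ is weak*-compact and the ground states form a weak*-closed subset of the state space of $\N\T(A,\al)$, this continuous affine bijection is automatically a homeomorphism. I expect the forward implication to be the main obstacle: the diagonal terms $V_{\un x}aV_{\un x}^*$ are invisible to the naive test $f = 1$, $g = V_{\un x}aV_{\un x}^*$ (which produces the constant factor $e^{i\sca{\un x - \un x, \un\la}z} = 1$), and one must instead pair $f = V_{\un x}$ with $g = V_{\un x}^*$ to expose a growing exponential, after which Cauchy--Schwarz propagates the vanishing to every monomial.
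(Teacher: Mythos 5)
Your proposal is correct and follows essentially the same route as the paper: the boundedness defining a ground state is tested against monomials to force $\psi$ to vanish off the $\un{0}$ fibre, and the vacuum vector state of the Fock representation built from the GNS triple of $\tau$ realises the converse, with the homeomorphism following from weak*-compactness. The only cosmetic differences are that you first annihilate the projections $V_{\un{x}}V_{\un{x}}^*$ and propagate by Cauchy--Schwarz, where the paper tests $f = V_{\un{x}}$, $g = aV_{\un{y}}^*$ (and its adjoint case) directly, and that you verify the ground-state boundedness on the concrete Fock state $\psi_\tau$ rather than on an abstract state satisfying the displayed condition, the two being interchangeable since such a state is determined by its restriction to $A$.
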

\begin{proof}
Let $\si \colon \bR \to \Aut(A)$ be the action related to $\un{\la} \in \bR^n$ and recall that  $\si_z(V_{\un{x}} a V_{\un{y}}^*) = e^{i \sca{\un{x} - \un{y}, \un{\la}} z} V_{\un{x}} a V_{\un{y}}^*$ for all $z \in \bC$.

First fix a ground state $\psi$.
Suppose that $\un{y} \neq \un{0}$ and note that the map
\[
r + i t \mapsto \psi(V_{\un{x}} \si_{r + it}(a V_{\un{y}}^*)) = e^{-i\sca{\un{y},\un{\la}}r} e^{\sca{\un{y},\un{\la}}t} \psi(V_{\un{x}} a V_{\un{y}}^*)
\]
must be bounded when $t > 0$, for all $a \in A$.
Therefore we get that $\psi(V_{\un{x}} a V_{\un{y}}^*) =0$.
When $\un{y} = \un{0}$ but $\un{x} \neq \un{0}$, we have that the function
\[
r + it \mapsto \psi(a^* \si_{r + i t}(V_{\un{x}}^*)) = e^{-i\sca{\un{x},\un{\la}}r} e^{\sca{\un{x},\un{\la}}t} \psi(a^* V_{\un{x}}^*)
\]
must be bounded for $t > 0$; thus $\psi(a^* V_{\un{x}}^*) = 0$.
Taking adjoints yields $\psi(V_{\un{x}} a) =0$.

Conversely let $\psi$ be a state on $\N\T(A,\al)$ that satisfies the condition of the statement.
Then for $f = V_{\un{x}} a V_{\un{y}}^*$ and $g = V_{\un{z}} b V_{\un{w}}^*$ we compute
\begin{align*}
|\psi(f \si_{r + it}(g))|^2
& =
|e^{i\sca{\un{z} - \un{w}, \un{\la}}(r + it)} \psi(fg)|^2 \\
& =
e^{-\sca{\un{z} - \un{w}, \un{\la}}t} |\psi(fg)|^2 \\
& \leq
e^{-\sca{\un{z} - \un{w}, \un{\la}}t} \psi(f^*f) \psi(g^*g) \\
& \leq
e^{-\sca{\un{z} - \un{w}, \un{\la}}t} \psi(V_{\un{y}} a^* a V_{\un{y}}^*) \psi(V_{\un{w}} b^*bV_{\un{w}}^*).
\end{align*}
When $\un{y} \neq \un{0}$ or $\un{w} \neq \un{0}$ then the above expression is $0$.
When $\un{y} = \un{w} = \un{0}$ then
\begin{align*}
|\psi(f \si_{r + it}(g))|
& =
e^{-\sca{\un{z},\un{\la}}t} |\psi(V_{\un{x}} a V_{\un{z}} b)|
 =
e^{-\sca{\un{z},\un{\la}}t} |\psi(V_{\un{x} + \un{z}} \al_{\un{z}}(a) b)|,
\end{align*}
which is zero when $\un{x} \neq \un{0}$ or $\un{z} \neq \un{0}$.
Finally when $\un{x} = \un{y} = \un{z} = \un{w} = \un{0}$ then $\psi(f \si_{r + it}(g)) = \psi(ab)$, which is bounded on $\{z \in \bC \mid \text{Im}(z)>0\}$ for all $a,b \in A$.

Because of this characterization, every state on $A$ gives rise to a ground state on $\N\T(A,\al)$.
Indeed, let $\tau$ be a state on $A$ and let $(H_\tau, \pi_\tau, \xi_\tau)$ be the associated GNS representation.
Then for the Fock representation $(\wt{\pi_\tau}, V_{\tau})$ we define the state
\[
\psi_\tau(f) = \sca{f \xi_\tau \otimes e_{\un{0}}, \xi_\tau \otimes e_{\un{0}}}, \foral f \in \N\T(A,\al).
\]
It is readily verified that $\psi_\tau(a) = \tau(a)$ for all $a\in A$.
For $f = V_{\un{x}} a V_{\un{y}}^*$ we compute
\begin{align*}
\psi_\tau(V_{\un{x}} a V_{\un{y}}^*)
& =
\sca{V_{\tau, \un{x}} \wt{\pi_\tau}(a) V_{\tau, \un{y}}^* \xi_\tau \otimes e_{\un{0}}, \xi_\tau \otimes e_{\un{0}}} \\
& =
\sca{\wt{\pi_\tau}(a) V_{\tau, \un{y}}^* \xi_\tau \otimes e_{\un{0}}, V_{\tau, \un{x}}  \xi_\tau \otimes e_{\un{0}}} \\
& =
\de_{\un{x}, \un{0}} \de_{\un{y}, \un{0}} \sca{\pi_\tau(a) \xi_\tau, \xi_\tau} \\
& =
\begin{cases}
\tau(a) & \text{ for } \un{x}=\un{0}=\un{y},\\
0 & \text{ otherwise},
\end{cases} \\
& =
\begin{cases}
\psi_\tau(a) & \text{ for } \un{x}=\un{0}=\un{y},\\
0 & \text{ otherwise}.
\end{cases}
\end{align*}
Finally note that $\psi = \psi_{\tau}$ for $\tau = \psi|_A$, since the restriction of a ground state to $A$ defines a state on $A$.
\end{proof}

\subsubsection{KMS$_\infty$ states}

We continue with the characterization of the KMS$_\infty$ states on $\N\T(A,\al)$.

\begin{proposition}\label{P: infty TNP}
Let $\al \colon \bZ_+^n \to \End(A)$ be a unital C*-dynamical system.
Then a state $\psi$ is a KMS$_\infty$ state on $\N\T(A,\al)$ if and only if
\[
\psi(V_{\un{x}} a V_{\un{y}}^*)
=
\begin{cases}
\tau(a) & \text{ for } \un{x}=\un{0}=\un{y},\\
0 & \text{ otherwise},
\end{cases}
\]
for a tracial state $\tau$ of $A$.

Consequently, there is an affine homeomorphism from the tracial state space $T(A)$ onto the KMS${}_\infty$ states on $\N\T(A,\al)$.
\end{proposition}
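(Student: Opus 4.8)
The plan is to exploit the explicit description of the finite-temperature KMS states already in hand and simply pass to the limit $\be \to \infty$. Recall that a KMS$_\infty$ state is by definition a w*-limit of $(\si,\be)$-KMS states, and that these exist (for the relevant sign of $\be$) precisely when $\la_k\be > 0$ for all $k$; I will work in the regime $\la_k > 0$ and let $\be \to +\infty$. By Theorem \ref{T: KMS TNP} every $(\si,\be)$-KMS state is of the form $\psi_{\tau_\be}$ for a unique tracial state $\tau_\be$ of $A$, with the closed formula of Proposition \ref{P: constr KMS}. The whole argument will hinge on reading off the limit of that formula on the spanning monomials $V_{\un x} a V_{\un y}^*$ and recognizing the answer as the ground-state shape from Proposition \ref{P: ground TNP}, now restricted to tracial states.

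For the forward direction I would take a KMS$_\infty$ state $\psi = \text{w*-}\lim \psi_{\tau_\be}$ and evaluate on a monomial. When $\un x \neq \un y$ the formula vanishes identically; when $\un x = \un y \neq \un 0$ the prefactor $e^{-\sca{\un x, \un\la}\be}$ dominates the (uniformly bounded) geometric sum and forces the value to $0$, using $\sca{\un x, \un\la} > 0$; and for $\un x = \un y = \un 0$ I would isolate the $\un w = \un 0$ term, observing that $\prod_i(1 - e^{-\la_i\be}) \to 1$ while the remaining tail $\sum_{\un w \neq \un 0} e^{-\sca{\un w, \un\la}\be}$ tends to $0$, so that $\psi_{\tau_\be}(a) - \tau_\be(a) \to 0$. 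This identifies the limit $\tau := \psi|_A$ as a w*-limit of tracial states, hence itself a tracial state, and shows $\psi$ has the stated form.

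For the converse I would fix a tracial state $\tau$, form the associated $(\si,\be)$-KMS states $\psi_\tau$ of Proposition \ref{P: constr KMS}, and run the same limit computation --- now with $\tau$ fixed in place of $\tau_\be$ --- to get $\psi_\tau(V_{\un x} a V_{\un y}^*) \to \de_{\un x, \un 0}\de_{\un y, \un 0}\tau(a)$ on every monomial. Since the monomials span a dense subspace and all states have norm one, a standard $\eps/3$-argument promotes this to genuine w*-convergence on all of $\N\T(A,\al)$, exhibiting the prescribed functional as a KMS$_\infty$ state. Finally, for the parametrization I would note that the desired map is exactly the restriction to $T(A)$ of the affine w*-homeomorphism $S(A) \to \{\text{ground states}\}$ of Proposition \ref{P: ground TNP}; by the characterization just proved its image is precisely the KMS$_\infty$ states, and the restriction of an affine homeomorphism to the w*-closed set $T(A)$ remains an affine homeomorphism onto its image.

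I expect the only real obstacle to be the limit computation itself: controlling the two vanishing contributions (the $\un x \neq \un 0$ prefactor and the $\un w \neq \un 0$ tail of the geometric series) uniformly enough to conclude, and confirming that the trace identity is inherited by the w*-limit $\psi|_A$. The positivity of all $\la_k$ (so that $\sca{\un x, \un\la} > 0$ for $\un x \neq \un 0$) is what makes these estimates go through; the passage from convergence on monomials to w*-convergence is routine given the uniform norm bound.
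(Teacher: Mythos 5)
Your proposal is correct and follows essentially the same route as the paper: both pass to the limit $\be \to \infty$ in the explicit formula of Proposition \ref{P: constr KMS}, showing that monomials with $(\un{x},\un{y}) \neq (\un{0},\un{0})$ vanish while the $\un{w}=\un{0}$ term survives, and both derive the parametrization from the ground-state picture of Proposition \ref{P: ground TNP}. The only cosmetic differences are that the paper's forward direction quotes Proposition \ref{P: cond KMS} directly instead of routing through Theorem \ref{T: KMS TNP}, and its converse extracts a limit point by w*-compactness rather than proving w*-convergence by your $\eps/3$-argument.
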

\begin{proof}
Fix $\psi$ be a KMS$_\infty$ state on $\N\T(A,\al)$.
Let $\psi_{\be}$ be $(\si,\be)$-KMS states on $\N\T(A,\al)$ that converge in the w*-topology to $\psi$.
By Proposition \ref{P: cond KMS} we obtain that $\psi|_A$ is a tracial state on $A$ and that
\[
\psi_{\be}(V_{\un{x}} a  V_{\un{y}}^*) = \de_{\un{x}, \un{y}} e^{-\sca{\un{x},\un{\la}}\be} \psi(a),
\]
which tends to zero when $\be \longrightarrow \infty$, if $\un{x} \neq \un{0}$ or $\un{y} \neq \un{0}$.

Conversely, let $\psi_\tau$ be as in the statement with respect to a tracial state $\tau$ of $A$.
Let $\psi_{\tau,\be}$ be as defined in Proposition \ref{P: constr KMS}, i.e.
\begin{align*}
\psi_{\tau, \be}(V_{\un{x}} a V_{\un{y}}^*)
=
\de_{\un{x}, \un{y}} \cdot
e^{-\sca{\un{x},\un{\la}}\be} \prod_{i=1}^n (1 - e^{-\la_i\be}) \cdot
\sum_{\un{w} \in \bZ_+^n} e^{-\sca{\un{w},\un{\la}}\be} \tau\al_{\un{w}}(a).
\end{align*}
By the w*-compactness we may choose a sequence of such states that converges to a state, say $\psi$.
By definition $\psi$ is then a KMS$_\infty$ state, and we aim to show that $\psi_\tau = \psi$.
When $\un{x}, \un{y} \neq \un{0}$ then we get that $\lim_{\be \to \infty} \psi_{\tau,\be}(V_{\un{x}} a V_{\un{y}}^*) = 0$, as in the preceding paragraph.
When $\un{x} = \un{y} =0$ then
\begin{align*}
\psi_{\tau,\be}(a)
& =
\prod_{i=1}^n (1 - e^{-\la_i\be}) \cdot
\sum_{\un{w} \in \bZ_+^n} e^{-\sca{\un{w},\un{\la}}\be} \tau\al_{\un{w}}(a) \\
& =
\prod_{i=1}^n (1 - e^{-\la_i\be}) \cdot
\Big( \tau(a) + \sum_{\un{w} > \un{0}} e^{-\sca{\un{w},\un{\la}}\be} \tau\al_{\un{w}}(a) \Big).
\end{align*}
However
\begin{align*}
| \sum_{\un{w} > \un{0}} e^{-\sca{\un{w},\un{\la}}\be} \tau\al_{\un{w}}(a) |
& \leq
\nor{a} \cdot \sum_{\un{w} > \un{0}} e^{-\sca{\un{w},\un{\la}}\be} \\
& =
\nor{a}(-1 + \prod_{i=1}^n(1 - e^{-\la_i\be})^{-1}).
\end{align*}
Taking $\be \longrightarrow \infty$ in the last expression yields that $\sum_{\un{w} > \un{0}} e^{-\sca{\un{w},\un{\la}}\be} \tau\al_{\un{w}}(a)$ tends to zero.
Trivially $\lim_{\be \to \infty} \prod_{i=1}^n (1 - e^{-\la_i\be}) = 1$, which implies that $\lim_{\be \rightarrow \infty} \psi_{\tau,\be}(a) = \tau(a)$, hence $\psi = \psi_\tau$.
The proof is completed as in Proposition \ref{P: ground TNP}.
\end{proof}

\subsubsection{KMS states at $\be=0$ (tracial states)}

A tracial state on $\N\O(A,\al)$ defines automatically a tracial state on $\N\T(A,\al)$.
By Proposition \ref{P: tracial} (that will follow) this works also in the converse direction.

\begin{proposition}
Let $\al \colon \bZ_+^n \to \End(A)$ be a unital C*-dynamical system and let $\wt{\be} \colon \bZ^n \to \Aut(\wt{B})$ be its automorphic dilation as defined in Subsection \ref{Ss: CNP}.
For any tracial state $\tau$ of $\wt{B}$ there exists a tracial state $\psi$ of $\N\T(A,\al)$ such that
\begin{align*}
\psi (V_{\un{x}} a V_{\un{y}}^*) = \de_{\un{x}, \un{y}} \tau\wt{\be}_{-\un{x}}(a) \foral a\in A, \un{x}, \un{y} \in \bZ_+^n.
\end{align*}
\end{proposition}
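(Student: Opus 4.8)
The plan is to realise $\psi$ as the pull-back to $\N\T(A,\al)$ of the canonical trace on the crossed product $\wt B \rtimes_{\wt\be} \bZ^n$ attached to $\tau$, and then to read off its values on the spanning monomials $V_{\un x} a V_{\un y}^*$. The reason for routing through the crossed product is that positivity of $\psi$ then comes for free, as a pull-back/compression of a genuine state, and only traciality has to be checked.

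First I would manufacture a trace on $\wt B \rtimes_{\wt\be} \bZ^n$. Let $E \colon \wt B \rtimes_{\wt\be} \bZ^n \to \wt B$ be the canonical faithful conditional expectation onto the coefficient algebra, and set $\wh\tau := \tau \circ E$; this is visibly a state. The key point, and the main obstacle, is that $\wh\tau$ is tracial: after applying $E$, the identity $\wh\tau(xy)=\wh\tau(yx)$ on elements in standard form $\sum_{\un g} b_{\un g} U_{\un g}$ collapses to $\tau\big(\wt\be_{\un g}(b)\,c\big)=\tau\big(c\,\wt\be_{\un g}(b)\big)$, which needs both that $\tau$ is a trace on $\wt B$ and that $\tau$ is preserved by the $\wt\be_{\un g}$. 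It is precisely this $\wt\be$-invariance that makes $\tau\circ E$ descend to a trace rather than merely a state (already in the one-variable picture a non-invariant $\tau$ fails: a point-mass trace on $C(\bT)$ does not extend over an irrational rotation crossed product). So the heart of the argument is to use the compatibility of $\tau$ with the dilation action.

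Second, I would transport this trace to $\N\O(A,\al)$. By Subsection \ref{Ss: CNP}, $\N\O(A,\al)$ sits inside $\wt B \rtimes_{\wt\be} \bZ^n$ as a full corner $p\big(\wt B \rtimes_{\wt\be} \bZ^n\big)p$ and contains $A$ isometrically, with Cuntz--Nica generators obeying the covariance relation $a U_{\Bi} = U_{\Bi}\al_{\Bi}(a)$, so that $U_{\un x} a U_{\un x}^* = \wt\be_{-\un x}(a)$ for $a \in A$. Compressing $\wh\tau$ to this corner and renormalising yields a tracial state on $\N\O(A,\al)$, and since $U_{\un x} a U_{\un y}^* = \wt\be_{-\un x}(a)\, U_{\un x-\un y}$ one computes $E(U_{\un x} a U_{\un y}^*) = \de_{\un x, \un y}\,\wt\be_{-\un x}(a)$, whence the compressed trace takes the value $\de_{\un x,\un y}\,\tau\wt\be_{-\un x}(a)$. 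In the non-injective case the projection $p$ from the tail-adding construction forces extra bookkeeping, but the value computation is unaffected because $a$ and the $U_{\un x}$ are supported at the $\un 0$-th coordinate.

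Finally, I would pull back along the canonical $*$-epimorphism $\N\T(A,\al) \twoheadrightarrow \N\O(A,\al)$, which fixes $A$ and sends $V_{\un x} \mapsto U_{\un x}$. The pull-back $\psi$ of the trace just constructed is again a tracial state, and from $V_{\un x} a V_{\un y}^* \mapsto U_{\un x} a U_{\un y}^*$ we obtain exactly $\psi(V_{\un x} a V_{\un y}^*) = \de_{\un x,\un y}\,\tau\wt\be_{-\un x}(a)$. As an alternative to the compression step, one could define $\psi$ directly by this formula on the dense $*$-algebra of monomials and verify $\psi(fg)=\psi(gf)$ by hand: this is the $\be=0$ specialisation of the reduced-form computation in the proof of Proposition \ref{P: cond KMS}, where the two sides become $\tau\wt\be_{-\un u}\big(\al_{\un p}(a)\al_{\un q}(b)\big)$ and $\tau\wt\be_{-\un v}\big(\al_{\un q}(b)\al_{\un p}(a)\big)$ for the indices appearing there; traciality of $\tau$ interchanges the two factors and $\wt\be$-invariance of $\tau$ equates the prefactors $\wt\be_{-\un u}$ and $\wt\be_{-\un v}$. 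Either route makes the verification of traciality the crux, with positivity handed over to the crossed-product realisation.
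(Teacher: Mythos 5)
Your proposal follows the paper's own route: form $\tau\circ E$ on $\wt{B}\rtimes_{\wt\be}\bZ^n$, compress to the corner $p\big(\wt{B}\rtimes_{\wt\be}\bZ^n\big)p \simeq \N\O(A,\al)$, and pull back along the quotient $q\colon\N\T(A,\al)\to\N\O(A,\al)$; the evaluation $E(U_{\un{x}}aU_{\un{y}}^*)=\de_{\un{x},\un{y}}\wt\be_{-\un{x}}(a)$ is also exactly how the paper reads off the formula. The genuine gap is at the step you yourself single out as the crux and then never carry out: traciality of $\tau\circ E$. What is needed is $\tau(b\,\wt\be_{\un{g}}(c))=\tau(\wt\be_{-\un{g}}(b)\,c)$ for all $b,c\in\wt{B}$, $\un{g}\in\bZ^n$, which, given that $\tau$ is a trace, is precisely $\wt\be$-invariance of $\tau$ (your displayed collapse $\tau(\wt\be_{\un{g}}(b)c)=\tau(c\wt\be_{\un{g}}(b))$ is trivially true and is not the right condition). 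But the proposition assumes only that $\tau$ is a tracial state of $\wt{B}$; there is no ``compatibility of $\tau$ with the dilation action'' given, and neither of your two routes (via $E$, or the direct monomial computation, which invokes ``$\wt\be$-invariance of $\tau$'' verbatim) establishes it. Moreover the gap cannot be closed from the stated hypotheses: your own example defeats it. For an automorphic system the tail-adding is vacuous ($I_{\un{x}}=A$ for $\un{x}\neq\un{0}$), so $\wt{B}=A$ and $\wt\be$ is just $\al$ extended to $\bZ^n$; take $A=C(\bT)$, $\al_{\bo{1}}$ an irrational rotation, and $\tau$ a point mass. Any state with the displayed values satisfies $\psi(V_{\bo{1}}^*\cdot aV_{\bo{1}})=\psi(\al_{\bo{1}}(a))=\tau\al_{\bo{1}}(a)$, while $\psi(aV_{\bo{1}}\cdot V_{\bo{1}}^*)=\psi(V_{\bo{1}}\al_{\bo{1}}(a)V_{\bo{1}}^*)=\tau\wt\be_{-\bo{1}}\al_{\bo{1}}(a)=\tau(a)$, so such a $\psi$ is tracial only if $\tau\al_{\bo{1}}=\tau$.

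In fairness, you did not import this defect: the paper's proof makes the identical leap, asserting without justification that $\tau E$ is tracial for an \emph{arbitrary} tracial state $\tau$ on $\wt{B}$. What your analysis actually uncovers is a missing hypothesis in the proposition itself: the statement, your argument, and the paper's argument all become correct once one assumes in addition that $\tau$ is $\wt\be$-invariant, i.e.\ $\tau\wt\be_{\un{x}}=\tau$ for all $\un{x}\in\bZ^n$; you should say this explicitly instead of gesturing at it. One further point your ``extra bookkeeping'' remark conceals: in the non-injective case $p=1\otimes e_{\un{0}}$ is not the unit of $\wt{B}$, so the compression of $\tau\circ E$ by $p$ is a state only when $\tau(p)=1$; otherwise one must renormalise by $\tau(p)^{-1}$, which changes the displayed formula, so this too is a genuine constraint on $\tau$ rather than bookkeeping.
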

\begin{proof}
It suffices to find a tracial state $\phi$ of $\N\O(A,\al)$ so that $\phi (U_{\un{x}} a U_{\un{y}}^*) = \de_{\un{x}, \un{y}} \tau(\wt{\be}_{-\un{x}}(a))$.
Then we may set $\psi = \phi q$ for the canonical $*$-epimorphism $q \colon \N\T(A,\al) \to \N\O(A,\al)$.
Recall that $\N\O(A,\al)$ is a corner of the crossed product $\wt{B} \rtimes_{\wt{\be}} \bZ^n$ by the element $p=1 \in A \subseteq \wt{B}$.
Consequently the tracial states on $\wt{B} \rtimes_{\wt{\be}} \bZ^n$ define tracial states on $\N\O(A,\al)$ by restriction.
If $\tau$ is a tracial state on $\wt{B}$ then $\psi : = \tau E \colon \wt{B} \rtimes_{\wt{\be}} \bZ^n \to \bC$ is a tracial state, where $E$ is the conditional expectation on the crossed product.
It is then readily verified that $\psi(U_{\un{x}} a U_{\un{y}}^*) = \de_{\un{x}, \un{y}} \tau\wt{\be}_{-\un{x}}(a)$ for all $a\in A$ and $\un{x}, \un{y} \in \bZ_+^n$.
\end{proof}

\subsubsection{Applications}

Our analysis can be used to treat the particular cases when $\la_k=1$ for all $k=1, \dots, n$, or when $\la_k=0$ for some $k=1, \dots, n$.

\begin{example}
Let the action $\si \colon \bR \to \Aut(\N\T(A,\al))$ given by $\si_t = \ga_{(\exp(it), \dots, \exp(it))}$.
Then the KMS condition translates into
\begin{align*}
\psi(V_{\un{x}} aV_{\un{y}}^* \cdot V_{\un{z}} b V_{\un{w}}^*)
& =
e^{-(|\un{x}|- |\un{y}|)\be}\psi(V_{\un{z}} b V_{\un{w}}^* \cdot V_{\un{x}} a V_{\un{y}}^*),
\end{align*}
for all $a,b \in A$ and $\un{x}, \un{y}, \un{z}, \un{w} \in \bZ_+^n$.
The previous analysis then gives the appropriate characterization of the $(\si,\be)$-KMS states by setting $\la_k=1$ for all $k=1, \dots, n$:

\begin{inparaenum}[\upshape(i)]
\item for $\be<0$ there are no $(\si,\be)$-KMS states;

\item for $\be > 0$, a state $\psi$ is $(\si,\be)$-KMS state if and only if $\psi(ab)=\psi(ba)$, and $\psi(V_{\un{x}} a V_{\un{y}}^*) = \de_{\un{x}, \un{y}} e^{-|\un{x}|\be} \psi(a)$, for all $a,b \in A$ and $n,m \in \bZ_+$;

\item for every tracial state $\tau$ of $A$ and $\be>0$ there is a $(\si,\be)$-KMS state $\psi_\tau$ of $\N\T(A,\al)$ such that
    \[
    \psi_\tau(V_{\un{x}} a V_{\un{y}}^*)
    =
    \de_{\un{x}, \un{y}} \cdot
    e^{-|\un{x}|\be} \cdot (1 - e^{-\be})^n \sum_{\un{w} \in \bZ_+^n} e^{-|\un{w}|\be} \tau\al_{\un{w}}(a),
    \]
    for all $a\in A$ and $\un{x}, \un{y} \in \bZ_+^n$. This representation is a parametrization of the $(\si,\be)$-KMS states.
\end{inparaenum}
\end{example}

Now let us examine the case where some $\la_k$ are zeroes.
Without loss of generality we may assume that $\la_{d+1} = \dots = \la_n =0$ and $\la_1, \dots, \la_d \neq 0$.
Then $\si \colon \bR \to \Aut(\N\T(A,\al))$ is given by
\[
\si_{t} = \ga_{(\exp(i\la_1t), \dots, \exp(i\la_dt), 1, \dots, 1)}.
\]
Hence we obtain
\begin{align*}
\si_t(V_{\un{x}} a V^*_{\un{y}}) = e^{i \sum_{k=1}^d(x_{k} - y_{k})t} V_{\un{x}} a V_{\un{y}}^*,
\end{align*}
and the KMS condition is translated into
\begin{align*}
\psi(V_{\un{x}} a V_{\un{y}}^* \cdot V_{\un{z}} b V_{\un{w}}^*)
& =
e^{-\sum_{k=1}^d(x_k-y_k)\la_k\be} \psi(V_{\un{z}} b V_{\un{w}}^* \cdot V_{\un{x}} a V_{\un{y}}^*),
\end{align*}
for all $a,b \in A$ and $\un{x}, \un{y}, \un{z}, \un{w} \in \bZ_+^n$.
We aim to show that the $(\si,\be)$-KMS states on $\N\T(A,\al)$ are determined by the $(\si',\be)$-KMS states on $\N\T(Z,\zeta)$ for a suitably chosen C*-dynamical system $\zeta \colon Z \to \End(Z)$ and an appropriate action $\si' \colon \bR \to \Aut(\N\T(Z,\zeta))$.

To this end recall that $a V_{\Bi} = V_{\Bi} \al_{\Bi}(a)$ for all $a\in A$.
Then $a V_{\Bi} V_{\Bi}^* = V_{\Bi} V_{\Bi}^* a$ and since $V_{\Bi}$ is an isometry we obtain that
\[
\al_{\Bi}(a) = V_{\Bi}^* a V_{\Bi}, \foral a \in A.
\]
We define the C*-algebra
\[
Z = \ol{\spn}\{V_{\un{x}} a V_{\un{y}}^* \mid a\in A, \supp \un{x}, \supp \un{y} \subseteq \{\bo{d+1}, \dots, \bo{n}\} \}.
\]
Observe that $Z$ is a C*-subalgebra of $\N\T(A,\al)$.
For $\Bi = \bo{1}, \dots, \bo{d}$ we can extend the $*$-endomorphism $\al_\Bi$ to a $*$-endomorphism $\zeta_{\Bi}$ of $Z$ defined by
\[
\zeta_{\Bi} (f) = V_{\Bi}^* f V_{\Bi}, \foral f \in Z.
\]
To see that every $\zeta_{\Bi}$ is an algebraic endomomorphism we remark that $V_{\bo{i}} V_{\bo{i}}^* \in Z'$ and that
\begin{align*}
\zeta_{\Bi} (V_{\un{x}} a V_{\un{y}}^*)
 =
V_{\Bi}^* V_{\un{x}} a V_{\un{y}}^* V_{\Bi}
 =
V_{\un{x}} \al_{\Bi}(a) V_{\un{y}}^* V_{\Bi}^* V_{\Bi}
 =
V_{\un{x}} \al_{\Bi}(a) V_{\un{y}}^*,
\end{align*}
where we have used that the $V_{\Bi}$ are doubly commuting isometries and that $\un{x}, \un{y} \in \Bi^\perp$.
This is a concrete system and $\N\T(Z,\zeta)$ has a representation in $\N\T(A,\al)$ given by the covariant pair $(\id_Z,V)$.
We will show that the induced $*$-representation is faithful.

\begin{proposition}
With the aforementioned notation, let $Z \subseteq \N\T(A,\al)$, and let the covariant pair $(\id_Z,V)$ of $\zeta \colon \bZ_+^n \to \End(Z)$.
Then the induced $*$-representation on $\N\T(Z,\zeta)$ is faithful.
\end{proposition}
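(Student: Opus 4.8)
The plan is to invoke the gauge invariant uniqueness theorem for Toeplitz-Nica-Pimsner algebras recalled before Subsection \ref{Ss: CNP}, i.e. \cite[Theorem 4.2.11]{DFK14}, applied to the system $\zeta$ together with the pair $(\id_Z, V)$, where $V$ denotes the doubly commuting isometries $V_{\bo1}, \dots, V_{\bo d}$ in the non-degenerate directions. Thus I would verify three things: that $(\id_Z, V)$ is an isometric Nica covariant pair for $\zeta$; that it admits a gauge action; and that the obstruction ideal $I_{(\id_Z,V)} = \{f \in Z \mid f \in \B_{(\un 0, \infty]}\}$ is trivial, where $\B_{(\un 0, \infty]} = \ol{\spn}\{V_{\un x} f V_{\un x}^* \mid f \in Z, \ \un 0 \neq \un x, \ \supp \un x \subseteq \{\bo1, \dots, \bo d\}\}$.

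The first two points are routine. For Nica covariance, the relation $f V_{\Bi} = V_{\Bi} \zeta_{\Bi}(f)$ is checked on the spanning monomials $f = V_{\un u} a V_{\un v}^*$ with $\supp \un u, \supp \un v \subseteq \{\bo{d+1}, \dots, \bo n\}$, using that $V_{\Bi}$ doubly commutes with $V_{\un u}, V_{\un v}$ and the ambient relation $a V_{\Bi} = V_{\Bi} \al_{\Bi}(a)$; since $V_{\bo1}, \dots, V_{\bo d}$ inherit double commutation from the ambient family, $(\id_Z, V)$ is isometric Nica covariant for $\zeta$. A gauge action is obtained by restricting the ambient gauge action of $\N\T(A,\al)$ to the first $d$ coordinates, $\ga'_{\un z} := \ga_{(z_1, \dots, z_d, 1, \dots, 1)}$: this fixes $Z$ pointwise, because its generators only involve the directions $\bo{d+1}, \dots, \bo n$, while scaling $V_{\Bi}$ by $z_i$ for $i \leq d$.

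The crux is the vanishing of $I_{(\id_Z,V)}$, and for this I would exploit the faithful Fock representation of $\N\T(A,\al)$ on $H \otimes \ell^2(\bZ_+^n)$. Set $K_{\un 0} = H \otimes \ol{\spn}\{e_{\un m} \mid \supp \un m \subseteq \{\bo{d+1}, \dots, \bo n\}\}$. This subspace is reducing for $Z$, and the restriction of $(\wt\pi, (V_{\Bi})_{i>d})$ to $K_{\un 0}$ is exactly the Fock representation of the subsystem of $\al$ indexed by $\{\bo{d+1}, \dots, \bo n\}$; as $\pi$ is faithful, \cite[Theorem 4.2.9]{DFK14} shows this Fock representation is faithful, so $z \mapsto z|_{K_{\un 0}}$ is isometric on $Z$. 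On the other hand, each spanning element $V_{\un x} f V_{\un x}^*$ of $\B_{(\un 0, \infty]}$ with $\un 0 \neq \un x$ supported in the first $d$ coordinates annihilates $K_{\un 0}$: for $\un m$ supported in $\{\bo{d+1}, \dots, \bo n\}$ we have $\un x \not\leq \un m$, hence $V_{\un x}^*(\xi \otimes e_{\un m}) = 0$. By continuity all of $\B_{(\un 0, \infty]}$ annihilates $K_{\un 0}$; so if $f \in I_{(\id_Z,V)}$ then $f \in \B_{(\un 0, \infty]}$ forces $f|_{K_{\un 0}} = 0$, whence $f = 0$ by the faithfulness just established.

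With isometric Nica covariance, the gauge action, and $I_{(\id_Z,V)} = (0)$ in hand, \cite[Theorem 4.2.11]{DFK14} yields that $(\id_Z, V)$ induces a faithful representation of $\N\T(Z,\zeta)$. I expect the main obstacle to be the faithfulness of $Z$ on the corner $K_{\un 0}$: one must correctly identify the restriction to $K_{\un 0}$ as a genuine Fock representation of the reduced system (equivalently, that the canonical map $\N\T(A, \al|_{\{\bo{d+1}, \dots, \bo n\}}) \to Z$ is an isomorphism) in order to apply \cite[Theorem 4.2.9]{DFK14}. The separation of $Z$ from $\B_{(\un 0, \infty]}$ is then precisely the observation that the two sit on complementary parts of the Fock space in the $\bo1, \dots, \bo d$ directions.
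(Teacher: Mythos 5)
Your proposal is correct, and its skeleton is the same as the paper's: check that $(\id_Z,V)$ is an isometric Nica covariant pair for $\zeta$ admitting a gauge action, and then invoke the gauge invariant uniqueness theorem of \cite[Theorem 4.2.11]{DFK14} after showing that $Z \bigcap \B_{(\un{0},\infty]} = (0)$ inside the Fock representation. The difference is in how that intersection is shown to vanish, and here your execution is the one that is actually watertight. The paper runs the separation in the opposite direction: it asserts that a typical monomial $V_{\un{x}} a V_{\un{y}}^* \in Z$ annihilates the vectors $\xi \otimes e_{\un{w}}$ with $\supp \un{w} \subseteq \{\bo{1}, \dots, \bo{d}\}$, and that elements of $\B_{(\un{0},\infty]}$ are determined by their action on such vectors. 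As printed, the first assertion fails for monomials with $\un{y} = \un{0}$ (for instance for $a \in A$ itself), and the second fails too (for instance $V_{\bo{1}} V_{\bo{n}} a V_{\bo{n}}^* V_{\bo{1}}^*$ kills every such vector without being zero); the paper's argument only goes through once the roles of the two algebras and of the two support conditions are interchanged. Your argument performs exactly this corrected, dual separation: every generator $V_{\un{x}} f V_{\un{x}}^*$ of $\B_{(\un{0},\infty]}$ with $\un{0} \neq \un{x}$ supported in $\{\bo{1},\dots,\bo{d}\}$ annihilates $K_{\un{0}} = H \otimes \ol{\spn}\{e_{\un{m}} \mid \supp \un{m} \subseteq \{\bo{d+1},\dots,\bo{n}\}\}$ since $V_{\un{x}}^*$ vanishes there, while $Z$ restricted to $K_{\un{0}}$ is faithful, being the Fock representation of the reduced system induced by the faithful $\pi$. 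You also correctly flag the one genuinely delicate point, namely that faithfulness of $Z|_{K_{\un{0}}}$ requires identifying the canonical surjection $\N\T(A,\al|_{\{\bo{d+1},\dots,\bo{n}\}}) \to Z$ as an isomorphism via \cite[Theorem 4.2.9]{DFK14}, so that injectivity of the composite forces injectivity of the restriction map on $Z$. In short, both proofs are Fock-space separation arguments, but yours assigns the annihilation to $\B_{(\un{0},\infty]}$ and the faithfulness to $Z$, which is the assignment that makes the argument valid.
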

\begin{proof}
Note that the representation $(\id_Z,V)$ admits a gauge action given by
\[
\ga' \colon \bT^d \to \Aut(\N\T(Z,\zeta)),
\]
such that $\ga_{\un{z}}' = \ad_{u_{\un{z}}}$ where $u_{\un{z}}(\xi \otimes e_{\un{w}}) = z_1^{w_1} \dots z_d^{w_d} \xi \otimes e_{\un{w}}$.
Therefore by the gauge invariant uniqueness theorem it suffices to show that $Z \bigcap \B_{(\un{0},\infty]} = (0)$, where
\[
\B_{(\un{0},\infty]} = \ol{\spn}\{V_{\un{x}} f V_{\un{x}}^* \mid f \in Z, \supp \un{x} \subseteq \{\bo{1}, \dots, \bo{d}\} \}.
\]
For a typical monomial $V_{\un{x}} a V_{\un{y}}^* \in Z$ we note that $V_{\un{x}} a V_{\un{y}}^* (\xi \otimes e_{\un{w}}) = 0$, when $\supp \un{w} \subseteq \{\bo{1}, \dots, \bo{d}\}$.
On the other hand $\B_{(\un{0},\infty]}$ is the inductive limit of the C*-subalgebras
\[
\B_{(\un{0}, m \cdot \bo{d}]} = \spn\{V_{\un{x}} f V_{\un{x}}^* \mid f \in Z, \un{0}< \un{x} \leq m \cdot \bo{d} \}.
\]
For $X \in \B_{(\un{0},m \cdot \bo{d}]}$ it is immediate to deduce that if $X(\xi \otimes e_{\un{w}}) =0$ for all $\un{0}< \un{x} \leq m \cdot \bo{d}$ then $X=0$.
Inductively for $X \in \B_{(\un{0},\infty]}$ we obtain that if $X(\xi \otimes e_{\un{w}}) = 0$ then $X =0$.
This shows that $Z \bigcap \B_{(\un{0},\infty]} = (0)$.
\end{proof}

An immediate corollary is that $\N\T(A,\al) = \N\T(Z,\zeta)$.
Then the action $\si$ induces an action on $\N\T(Z,\zeta)$.
The gain is that $\si_t|_Z = \id_Z$ and we fall into the previous analysis.
In particular note that there is a bijection between the actions $\si$ of $\N\T(A,\al)$ related to $\un{\la}$ with $\la_{d+1} = \dots = \la_n =0$ and the actions $\si'$ of $\N\T(Z,\zeta)$ defined by
\[
\si'_t = \ga'_{(\exp(i\la_1t), \dots, \exp(i\la_dt))},
\]
where $\{\ga'_{\un{z}}\}_{\un{z} \in \bT^d}$ here is the gauge action of $\N\T(Z,\zeta)$.

\begin{proposition}\label{P: d-states TNP}
With the aforementioned notation, a state $\psi$ is a $(\si, \be)$-KMS state on $\N\T(A,\al)$ if and only if $\psi$ is a $(\si',\be)$-KMS state on $\N\T(Z,\zeta)$.
\end{proposition}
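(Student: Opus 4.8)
The plan is to reduce the statement to the observation that, under the identification provided by the previous proposition, the two one-parameter groups $\si$ and $\si'$ are literally the same automorphism group of one and the same C*-algebra; the $(\si,\be)$- and $(\si',\be)$-KMS conditions then coincide verbatim. First I would invoke the preceding proposition: the covariant pair $(\id_Z, V)$ induces a faithful representation of $\N\T(Z,\zeta)$, and since its image is all of $\N\T(A,\al)$, we obtain a $*$-isomorphism $\N\T(Z,\zeta) \simeq \N\T(A,\al)$, which I use to identify the two algebras. Under this identification a dense $*$-subalgebra of $\N\T(Z,\zeta)$ is spanned by monomials $V_{\un{x}} f V_{\un{y}}^*$ with $\supp \un{x}, \supp \un{y} \subseteq \{\bo1, \dots, \bo{d}\}$ and coefficient $f = V_{\un{u}} a V_{\un{v}}^* \in Z$, where $\supp \un{u}, \supp \un{v} \subseteq \{\bo{d+1}, \dots, \bo{n}\}$; by double commutativity of the $V_{\Bi}$ this monomial equals $V_{\un{x} + \un{u}} a V_{\un{y} + \un{v}}^*$ in $\N\T(A,\al)$. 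Equivalently, every monomial of $\N\T(A,\al)$ decomposes in this way once one splits the exponents into their components supported in the first $d$ and in the last $n-d$ coordinates.

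Next I would compute the two actions on such a monomial and check that they agree. On the $\N\T(A,\al)$ side, $\si_t = \ga_{(\exp(i\la_1t), \dots, \exp(i\la_dt), 1, \dots, 1)}$ multiplies $V_{\un{x}+\un{u}} a V_{\un{y}+\un{v}}^*$ by $e^{i\sum_{k=1}^d \la_k((x_k+u_k) - (y_k+v_k))t}$; since $u_k = v_k = 0$ for $k \leq d$, this scalar is $e^{i\sum_{k=1}^d \la_k(x_k-y_k)t}$. On the $\N\T(Z,\zeta)$ side, $\si'_t = \ga'_{(\exp(i\la_1t), \dots, \exp(i\la_dt))}$ fixes the $Z$-valued coefficient $f$ and multiplies $V_{\un{x}} f V_{\un{y}}^*$ by $e^{i\sum_{k=1}^d \la_k(x_k - y_k)t}$. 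The two scalars coincide, so $\si_t$ and $\si'_t$ agree on a dense $*$-subalgebra; by continuity and boundedness of $*$-automorphisms we conclude $\si_t = \si'_t$ for every $t \in \bR$.

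Finally, since $\si$ and $\si'$ are one and the same one-parameter automorphism group of the single C*-algebra $\N\T(A,\al) = \N\T(Z,\zeta)$, the subalgebras of analytic elements coincide and the $(\si,\be)$-KMS condition is word-for-word the $(\si',\be)$-KMS condition. Hence a state $\psi$ satisfies one exactly when it satisfies the other, which is the assertion. The only point that requires care is the bookkeeping on supports in the middle step --- verifying that $\ga'$ leaves the $Z$-coefficient fixed and that the gauge weight $\si$ assigns to a monomial of $\N\T(A,\al)$ is precisely the one $\si'$ assigns to its preimage. There is no analytic subtlety beyond this: once the two actions are seen to coincide, the equivalence of the KMS conditions is immediate.
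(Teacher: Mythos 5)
Your proposal is correct and follows essentially the same route as the paper: both rest on the identification $\N\T(Z,\zeta) \simeq \N\T(A,\al)$ coming from the faithfulness of the pair $(\id_Z, V)$, together with the observation that on a decomposed monomial $V_{\un{x}_d} f V_{\un{y}_d}^*$ (with $f = V_{\un{x}_{n-d}} a V_{\un{y}_{n-d}}^* \in Z$) the two actions produce the identical scalar $e^{i\sum_{k=1}^d \la_k (x_k - y_k) t}$. The only difference is one of packaging: the paper verifies the KMS functional equation in both directions on spanning monomials, whereas you prove the slightly stronger and cleaner statement that $\si_t = \si'_t$ as one-parameter automorphism groups of the identified algebra, after which the equivalence of the two KMS conditions is tautological.
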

\begin{proof}
We will use that every element $\un{x} \in \bZ_+^n$ is decomposed as $\un{x} = \un{x}_d + \un{x}_{n-d}$ where
\[
\un{x}_d = (x_1, \dots, x_d, 0, \dots, 0) \qand \un{x}_{n-d} = (0, \dots, 0, x_{d+1}, \dots, x_n).
\]
Therefore a typical element $V_{\un{x}} a V_{\un{y}}^*$ can be written as $V_{\un{x}_d} f V_{\un{y}_d}^*$, with $f = V_{\un{x}_{n-d}} a V_{\un{y}_{n-d}}^* \in Z$.

First suppose that $\psi$ is a $(\si,\be)$-KMS state on $\N\T(A,\al)$.
We have to establish the equality
\begin{align*}
\psi(V_{\un{x}_d} f V_{\un{y}_d}^* \cdot V_{\un{z}_d} g V_{\un{w}_d}^*)
& =
\psi(V_{\un{z}_d} g V_{\un{w}_d}^* \cdot \si_{i\be}'(V_{\un{x}_d} f V_{\un{y}_d}^*)) \\
& =
e^{-\sum_{k=1}^d(x_k-y_k)\la_k\be} \psi(V_{\un{z}_d} g V_{\un{w}_d}^* \cdot V_{\un{x}_d} f V_{\un{y}_d}^*),
\end{align*}
for $f, g \in \N\T(Z,\zeta)$ and $\un{x}, \un{y}, \un{z}, \un{w} \in \bZ_+^n$.
It suffices to check it for $f = V_{\un{x}_{n-d}} a V_{\un{y}_{n-d}}^*$ and $g = V_{\un{z}_{n-d}} b V_{\un{w}_{n-d}}^*$, since such elements span a dense subset of $Z$.
This follows directly by the computation
\begin{align*}
\psi(V_{\un{x}_d} f V_{\un{y}_d}^* \cdot V_{\un{z}_d} g V_{\un{w}_d}^*)
& =
\psi(V_{\un{x}} a V_{\un{y}}^* \cdot V_{\un{z}} b V_{\un{w}}^*) \\
& =
\psi(V_{\un{z}} b V_{\un{w}}^* \cdot \si_{i\be}(V_{\un{x}} a V_{\un{y}}^*)) \\
& =
e^{-\sum_{k=1}^d(x_k-y_k)\la_k\be} \psi(V_{\un{z}} b V_{\un{w}}^* \cdot V_{\un{x}} a V_{\un{y}}^*) \\
& =
e^{-\sum_{k=1}^d(x_k-y_k)\la_k\be} \psi(V_{\un{z}_d} g V_{\un{w}_d}^* \cdot V_{\un{x}_d} f V_{\un{y}_d}^*).
\end{align*}

Conversely suppose that $\psi$ is a $(\si',\be)$-KMS state on $\N\T(Z,\zeta)$.
We have to establish the equality
\begin{align*}
\psi(V_{\un{x}} a V_{\un{y}}^* \cdot V_{\un{z}} b V_{\un{w}}^*)
& =
e^{-\sum_{k=1}^d(x_k-y_k)\la_k\be} \psi(V_{\un{z}} b V_{\un{w}}^* \cdot V_{\un{x}} a V_{\un{y}}^*),
\end{align*}
for $a,b \in A$ and $\un{x}, \un{y}, \un{z}, \un{w} \in \bZ_+^n$.
This follows directly by the computation
\begin{align*}
\psi(V_{\un{x}} a V_{\un{y}}^* \cdot V_{\un{z}} b V_{\un{w}}^*)
& =
\psi(V_{\un{x}_d} f V_{\un{y}_d}^* \cdot V_{\un{z}_d} g V_{\un{w}_d}^*) \\
& =
\psi(V_{\un{z}_d} g V_{\un{w}_d}^* \cdot \si_{i \un{\be}}'(V_{\un{x}_d} f V_{\un{y}_d}^*)) \\
& =
e^{-\sum_{k=1}^d(x_k-y_k)\la_k\be} \psi(V_{\un{z}_d} g V_{\un{w}_d}^* \cdot V_{\un{x}_d} f V_{\un{y}_d}^*) \\
& =
e^{-\sum_{k=1}^d(x_k-y_k)\la_k\be} \psi(V_{\un{z}} b V_{\un{w}}^* \cdot V_{\un{x}} a V_{\un{y}}^*),
\end{align*}
for the elements $f = V_{\un{x}_{n-d}} a V_{\un{y}_{n-d}}^* \in Z$ and $g = V_{\un{z}_{n-d}} b V_{\un{w}_{n-d}}^* \in Z$.
\end{proof}

\subsection{The Cuntz-Nica-Pimsner algebra}

Recall that $\N\O(A,\al)$ is the quotient of $\N\T(A,\al)$ by the ideal generated by the elements
\[
a \cdot \prod_{\Bi \in \supp \un x} (I - V_\Bi V_\Bi^*), \foral a \in I_{\un x},
\]
and for all $\un{x} > \un{0}$, where
\[
I_{\un x} = \bigcap_{\un y \in \un x^\perp} \al_{\un y}^{-1}
\Big(\big( \bigcap_{\Bi\in \supp \un x } \ker\al_\Bi \big)^\perp\Big).
\]
We denote by $q \colon \N\T(A,\al) \to \N\O(A,\al)$ the quotient map.
Since $A$ embeds isometrically inside $\N\O(A,\al)$ we write $a \equiv q(a)$ for all $a\in A$.
Moreover we write $U_{\un{x}} = q(V_{\un{x}})$ for all $\un{x} \in \bZ_+^n$.
The action $\si$ of $\N\T(A,\al)$ passes naturally to an action of $\N\O(A,\al)$ which we denote by the same symbol.
It is readily verified that the $(\si,\be)$-KMS states on $\N\O(A,\al)$ define $(\si,\be)$-KMS states on $\N\T(A,\al)$.
The converse is true when the state vanishes on $\ker q$.

\subsubsection{KMS states at $\be=0$}

The algebra $\N\O(A,\al)$ is a C*-subalgebra of $\wt{B^{(1)}} \rtimes_{\wt{\be^{(1)}}} \bZ^n$. Therefore $\N\O(A,\al)$ admits restrictions of tracial states on this crossed product.
On the other hand it shares the ``same'' tracial states with $\N\T(A,\al)$.

\begin{proposition}\label{P: tracial}
Let $\al \colon \bZ_+^n \to \End(A)$ be a unital C*-dynamical system.
Then a state is tracial on $\N\T(A,\al)$ if and only if it factors through a tracial state on $\N\O(A,\al)$.
\end{proposition}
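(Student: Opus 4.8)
The converse implication is immediate: if $\psi = \phi \circ q$ for a tracial state $\phi$ of $\N\O(A,\al)$, then $\psi(fg) = \phi(q(f)q(g)) = \phi(q(g)q(f)) = \psi(gf)$ because $q$ is a $*$-homomorphism. So the plan is to prove the substantial direction, namely that an arbitrary tracial state $\psi$ of $\N\T(A,\al)$ annihilates $\ker q$. Once this is established, $\psi$ descends to a well-defined state $\phi$ of $\N\O(A,\al)$ with $\psi = \phi \circ q$, which is automatically tracial since $q$ is a surjective $*$-homomorphism. Recall that $\ker q$ is the closed two-sided ideal generated by the elements $a \cdot e_{\un{x}}$, where $e_{\un{x}} := \prod_{\Bi \in \supp \un{x}}(I - V_\Bi V_\Bi^*)$, $a \in I_{\un{x}}$ and $\un{x} > \un{0}$.

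The crucial observation is that these projections are $\psi$-null. Since the system is unital each $V_\Bi$ is an isometry, so the trace property yields
\[
\psi(V_\Bi V_\Bi^*) = \psi(V_\Bi^* V_\Bi) = \psi(I) = 1,
\]
whence $\psi(I - V_\Bi V_\Bi^*) = 0$. By Nica covariance the projections $V_\Bi V_\Bi^*$ doubly commute, so $e_{\un{x}}$ is again a projection and $e_{\un{x}} \leq I - V_{\bo{k}} V_{\bo{k}}^*$ for any fixed $\bo{k} \in \supp \un{x}$; positivity of $\psi$ then forces $0 \leq \psi(e_{\un{x}}) \leq \psi(I - V_{\bo{k}} V_{\bo{k}}^*) = 0$. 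The Cauchy--Schwarz inequality for the state $\psi$, applied with $x = e_{\un{x}}$ and $y = e_{\un{x}} w$, next gives
\[
|\psi(e_{\un{x}} w)|^2 \leq \psi(e_{\un{x}}) \, \psi(w^* e_{\un{x}} w) = 0 \foral w \in \N\T(A,\al),
\]
so that $\psi(e_{\un{x}} w) = 0$.

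To pass to the whole ideal I would use that $A$ commutes with each $V_\Bi V_\Bi^*$, since $a V_\Bi V_\Bi^* = V_\Bi \al_\Bi(a) V_\Bi^* = V_\Bi V_\Bi^* a$, and hence with $e_{\un{x}}$. Consequently, for $a \in I_{\un{x}}$ the generator factors as $a e_{\un{x}} = e_{\un{x}} a e_{\un{x}}$, and for a typical element $b\,(a e_{\un{x}})\,c$ of the algebraic ideal generated by these elements, traciality together with the previous step gives
\[
\psi(b \, a e_{\un{x}} \, c) = \psi(a e_{\un{x}} c b) = \psi\big(e_{\un{x}} \cdot (a e_{\un{x}} c b)\big) = 0.
\]
Thus $\psi$ vanishes on the algebraic ideal generated by the $a e_{\un{x}}$, and by continuity of the state it vanishes on the norm-closure $\ker q$, which completes the argument.

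I do not anticipate a genuine obstacle here: the entire proof rests on the single observation that traciality forces $\psi(V_\Bi V_\Bi^*) = 1$, after which everything follows formally from the Cauchy--Schwarz inequality and the commutation of $A$ with the projections $e_{\un{x}}$. The only point demanding a little care is the final passage from the algebraic ideal to its norm-closure, which however uses nothing beyond boundedness of $\psi$.
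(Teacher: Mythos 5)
Your proof is correct and follows essentially the same route as the paper: the key point in both is that traciality forces $\psi(V_{\bo{k}}V_{\bo{k}}^*) = \psi(V_{\bo{k}}^*V_{\bo{k}}) = 1$, so that $\psi(I - V_{\bo{k}}V_{\bo{k}}^*) = 0$, and then Cauchy--Schwarz kills the generators of $\ker q$. You are in fact slightly more explicit than the paper in passing from the generators to the full closed ideal (via traciality and the commutation of $A$ with the range projections), a step the paper leaves implicit.
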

\begin{proof}
It suffices to show that if $\psi$ is a tracial state on $\N\T(A,\al)$ then it vanishes on $\ker q$ which is generated by the elements
\[
a \cdot \prod_{\Bi \in \supp \un x} (I - V_\Bi V_\Bi^*), \foral a \in I_{\un x} \text{ and } \un{x} > \un{0}.
\]
To this end fix $\un{x} > 0$ such that $x_k \neq 0$.
Since $\psi$ is tracial we obtain that
\[
1 = \psi(I) = \psi(V_{\bo{k}}^* V_{\bo{k}}) = \psi(V_{\bo{k}} V_{\bo{k}}^*).
\]
By the Cauchy-Schwartz inequality we then obtain that
\begin{align*}
|\psi( a \cdot \prod_{\Bi \in \supp \un x} (I - V_\Bi V_\Bi^*))|^2
& \leq
M \cdot \psi\left((I - V_{\bo{k}} V_{\bo{k}}^*) (I - V_{\bo{k}} V_{\bo{k}}^*)^* \right) \\
& =
M \cdot \psi(I - V_{\bo{k}} V_{\bo{k}}^*) = 0,
\end{align*}
for all $a\in A$, since $I - V_{\bo{k}}V_{\bo{k}}^*$ is a projection.
In particular this holds for all $a \in I_{\un{x}}$ and the proof is complete.
\end{proof}

\subsubsection{Injective systems}

For injective systems the picture is rather clear.

\begin{proposition}
Let $\al \colon \bZ_+^n \to \End(A)$ be a unital C*-dynamical system and $\si \colon \bR \to \Aut(\N\T(A,\al))$ be the action related to $\un{\la} \in \bR^n\setminus \{\un{0}\}$.
If $\al$ is injective then there are no KMS states on $\N\O(A,\al)$ for $\be \neq 0$.
\end{proposition}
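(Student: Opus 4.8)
The plan is to use that injectivity forces the generating isometries $U_\Bi$ to be unitaries, and then to read a contradiction straight off the KMS relation. First I would recall from Proposition \ref{P: injectivity} that injectivity of $\al$ gives $I_\Bi = A$ for every $\Bi = \bo1, \dots, \bo{n}$. Feeding this into the Cuntz-Nica covariance relation of Definition \ref{D: CN} at $\un{x} = \Bi$ yields $a(I - U_\Bi U_\Bi^*) = 0$ for all $a \in A$; since the system is unital, $1 \in A$ is the identity of $\N\O(A,\al)$, whence $U_\Bi U_\Bi^* = I$. As each $U_\Bi$ is already an isometry, every $U_\Bi$ is therefore a unitary in $\N\O(A,\al)$.

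Next I would note that, exactly as for $\N\T(A,\al)$, the monomials $U_{\un{x}} a U_{\un{y}}^*$ span a dense $\si$-invariant $*$-subalgebra of analytic elements, with $\si_z(U_\Bi) = e^{i\la_i z} U_\Bi$ for all $z \in \bC$, so that $\si_{i\be}(U_\Bi) = e^{-\la_i \be} U_\Bi$. Given any $(\si,\be)$-KMS state $\psi$ on $\N\O(A,\al)$, applying the KMS condition to the analytic pair $a = U_\Bi$, $b = U_\Bi^*$ gives
\[
\psi(U_\Bi U_\Bi^*) = \psi(U_\Bi^* \si_{i\be}(U_\Bi)) = e^{-\la_i \be}\, \psi(U_\Bi^* U_\Bi).
\]
Because $U_\Bi$ is unitary we have $U_\Bi U_\Bi^* = U_\Bi^* U_\Bi = I$, so both $\psi$-values equal $1$ and we conclude $e^{-\la_i \be} = 1$, i.e.\ $\la_i \be = 0$, for every $i = 1, \dots, n$.

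Since $\un{\la} \neq \un{0}$, there is a $k$ with $\la_k \neq 0$, and $\la_k \be = 0$ forces $\be = 0$; hence no $(\si,\be)$-KMS state can exist when $\be \neq 0$. The key point is the first step: injectivity collapses the inequality $\psi(V_\Bi V_\Bi^*) \leq \psi(I) = 1$ used in Proposition \ref{P: cond KMS}(i) into the exact equality $\psi(U_\Bi U_\Bi^*) = 1$, and once the $U_\Bi$ are known to be unitaries the conclusion is immediate, with no convergence or combinatorial estimates required.
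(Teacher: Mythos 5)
Your proposal is correct and follows essentially the same route as the paper: both reduce to the fact (Proposition \ref{P: injectivity}) that injectivity makes the $U_\Bi$ unitaries, and then observe that the inequality $\psi(U_{\bo{k}} U_{\bo{k}}^*) \leq \psi(I)$ from Proposition \ref{P: cond KMS}(i) becomes the equality $1 = e^{-\la_k \be}$, forcing $\la_k\be = 0$ for all $k$ and hence $\be = 0$ since some $\la_k \neq 0$. Your extra step of re-deriving unitarity directly from the Cuntz--Nica covariance relation with $I_\Bi = A$ (rather than invoking the crossed-product identification) is a harmless, slightly more self-contained variant of the same argument.
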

\begin{proof}
Recall that injectivity of $\al$ is equivalent to the $U_\Bi$ being unitaries by Proposition \ref{P: injectivity}.
Let $\psi$ be a $(\si, \be)$-KMS state on $\N\O(A,\al)$.
As in the first part of the proof of Proposition \ref{P: cond KMS} we can have an estimation for the $\la_k\be$ by using the $U_{\bo{k}}$ in the place of $V_{\bo{k}}$.
However now $U_{\bo{k}}$ is a unitary and we obtain equality, i.e. $1 = e^{-\la_k\be}$, thus $\la_k \be=0$ for all $k=1, \dots, n$.
Since $\si$ is not trivial there is a $\la_k \neq 0$ hence $\be=0$.
\end{proof}

Therefore if $\al \colon \bZ_+^n \to \End(A)$ is an injective C*-dynamical system then the only possible KMS states are the tracial states on $\N\O(A,\al)$.
Recall that in this case $\N\O(A,\al) \simeq \wt{A} \rtimes_{\wt{\al}} \bZ^n$ for the automorphic extension $\wt{\al} \colon \bZ^n \to \Aut(\wt{A})$.
When $\wt{A}$ admits tracial states then $\N\O(A,\al) \simeq \wt{A} \rtimes_{\wt{\al}} \bZ^n$ admits tracial states that appear as evaluations on the $(\un{0},\un{0})$-entry.

\subsubsection{Non-injective systems}

For the non-injective case we will use the analysis of the KMS states on $\N\T(A,\al)$.

\begin{theorem}\label{T: KMS CNP}
Let $\al \colon \bZ_+^n \to \End(A)$ be a unital C*-dynamical system, $\si \colon \bR \to \Aut(\N\T(A,\al))$ be the action related to $\un{\la} \in \bR^n$, and $\be \in \bR$ such that $\la_k \be >0$ for all $k=1, \dots, n$.
Then there is an affine homeomorphism $\tau \mapsto \phi_\tau$ from the simplex of the tracial states on $A$ that vanish on $I_{\un{1}}$ onto the simplex of the $(\si,\be)$-KMS states on $\N\O(A,\al)$ such that
\begin{align*}
\phi_\tau(U_{\un{x}} a U_{\un{y}}^*)
=
\de_{\un{x}, \un{y}} \cdot
e^{-\sca{\un{x},\un{\la}}\be} \cdot \prod_{i=1}^n (1 - e^{-\la_i\be}) \cdot
\sum_{\un{w} \in \bZ_+^n} e^{-\sca{\un{w},\un{\la}}\be} \tau\al_{\un{w}}(a),
\end{align*}
for all $a\in A$, $\un{x}, \un{y} \in \bZ_+^n$.
\end{theorem}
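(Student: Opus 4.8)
The plan is to deduce the statement from the parametrization already obtained for $\N\T(A,\al)$ in Theorem \ref{T: KMS TNP}, using the fact recorded just above that a state of $\N\O(A,\al)$ is $(\si,\be)$-KMS if and only if it is the factorisation through $q\colon\N\T(A,\al)\to\N\O(A,\al)$ of a $(\si,\be)$-KMS state of $\N\T(A,\al)$ that vanishes on $\ker q$. Throughout write $\un\be=\un\la\be$, as in the previous proofs. Writing $\tau\mapsto\psi_\tau$ for the affine homeomorphism of Theorem \ref{T: KMS TNP}, the whole theorem reduces to the single equivalence
\[
\psi_\tau \text{ vanishes on } \ker q \iff \tau|_{I_{\un 1}}=0 .
\]
Once this is in hand, $\phi_\tau$ is defined by $\phi_\tau\circ q=\psi_\tau$, and its formula is read off from Proposition \ref{P: constr KMS} through $\phi_\tau(U_{\un x}aU_{\un y}^*)=\psi_\tau(V_{\un x}aV_{\un y}^*)$.

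Recall that $\ker q$ is generated by the elements $a\,Q_{\un x}$, where $Q_{\un x}:=\prod_{\Bi\in\supp\un x}(I-V_\Bi V_\Bi^*)$, for $\un x>\un 0$ and $a\in I_{\un x}$. The heart of the argument is to evaluate $\psi_\tau$ on such a generator. Expanding $Q_{\un x}=\sum_{\supp\un w\subseteq\supp\un x,\,\un w\le\un 1}(-1)^{|\un w|}V_{\un w}V_{\un w}^*$, using the covariance relation $a V_{\un w}V_{\un w}^*=V_{\un w}\al_{\un w}(a)V_{\un w}^*$ and the formula of Proposition \ref{P: constr KMS}, and then running the computation of Lemma \ref{L: combinatoric} with $\un y$ the indicator of $\supp\un x$ in place of $\un 1$ (so that the projection $\prod_{\Bi\in\supp\un x}(I-p_\Bi)$ now isolates the coordinates \emph{off} $\supp\un x$), I expect to obtain
\[
\psi_\tau(a\,Q_{\un x})=\textstyle\prod_{i=1}^n(1-e^{-\be_i})\sum_{\un v\in\un x^\perp}e^{-\sca{\un v,\un\be}}\,\tau\al_{\un v}(a).
\]
For $a\in I_{\un x}$ and $\un v\in\un x^\perp$ the definition of $I_{\un x}$ gives $\al_{\un v}(a)\in\big(\bigcap_{\Bi\in\supp\un x}\ker\al_\Bi\big)^\perp$, and since $\bigcap_{i=1}^n\ker\al_\Bi\subseteq\bigcap_{\Bi\in\supp\un x}\ker\al_\Bi$ the latter ideal is contained in $I_{\un 1}=(\bigcap_i\ker\al_\Bi)^\perp$; hence $\tau|_{I_{\un 1}}=0$ forces every term to vanish, so $\psi_\tau(a\,Q_{\un x})=0$. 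Conversely, taking $\un x=\un 1$ (so that $\supp\un 1=\{\bo1,\dots,\bo n\}$ and $Q_{\un 1}=P$) the same computation gives $\psi_\tau(aP)=\prod_{i=1}^n(1-e^{-\be_i})\,\tau(a)$ with $\prod_{i=1}^n(1-e^{-\be_i})\ne 0$, so if $\psi_\tau$ kills $\ker q$ then $\tau$ kills $I_{\un 1}$. This settles both directions of the displayed equivalence at the level of generators.

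The main obstacle is to pass from ``vanishing on the generators $a\,Q_{\un x}$'' to ``vanishing on the whole ideal $\ker q$'': a state need not annihilate an ideal merely because it annihilates a generating set. To bridge this I would argue in the GNS representation $\pi_{\psi_\tau}$ and show $\pi_{\psi_\tau}(a\,Q_{\un x})=0$, which suffices since $\ker\pi_{\psi_\tau}$ is a closed two-sided ideal. By cyclicity it is enough to check $\psi_\tau\big((a\,Q_{\un x}d)^*(a\,Q_{\un x}d)\big)=0$ for $d$ ranging over the spanning monomials $V_{\un r}cV_{\un s}^*$. Here the Nica covariance does the work: if $\supp\un r\cap\supp\un x\ne\emptyset$ then $Q_{\un x}V_{\un r}=0$; otherwise $\un r\in\un x^\perp$, the doubly commuting relations give $Q_{\un x}V_{\un r}=V_{\un r}Q_{\un x}$ and $Q_{\un x}$ commutes with $A$, so after simplifying with $V_{\un r}^*a^*aV_{\un r}=\al_{\un r}(a^*a)$ and $Q_{\un x}^2=Q_{\un x}$ the expression collapses to $e^{-\sca{\un s,\un\be}}\psi_\tau(a'\,Q_{\un x})$ with $a'=c^*\al_{\un r}(a^*a)c\in I_{\un x}$ (using the $\al_{\un r}$-invariance of $I_{\un x}$ for $\un r\in\un x^\perp$ and that $I_{\un x}$ is an ideal). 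This is again a generator value, which vanishes by the previous paragraph; thus $\pi_{\psi_\tau}(a\,Q_{\un x})=0$ and $\psi_\tau$ vanishes on $\ker q$.

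Finally I would assemble the parametrization. The assignment $\tau\mapsto\phi_\tau$ is well defined on the tracial states of $A$ vanishing on $I_{\un 1}$, and is affine and weak*-continuous because $\tau\mapsto\psi_\tau$ is (Theorem \ref{T: KMS TNP}) and $q$ is fixed. It is injective, since $\phi_\tau\circ q=\psi_\tau$ and $\tau\mapsto\psi_\tau$ is injective, and it is onto, because any $(\si,\be)$-KMS state $\phi$ of $\N\O(A,\al)$ pulls back to a $(\si,\be)$-KMS state $\phi\circ q=\psi_\tau$ of $\N\T(A,\al)$ that vanishes on $\ker q$, whence $\tau|_{I_{\un 1}}=0$ by the equivalence and $\phi=\phi_\tau$ by surjectivity of $q$. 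Since the tracial states vanishing on $I_{\un 1}$ form a weak*-closed face of the compact trace simplex and the target is Hausdorff, the continuous affine bijection is an affine homeomorphism, as in \cite[Proof of Theorem 6.1]{LRRW13}.
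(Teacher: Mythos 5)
Your proposal is correct, and its skeleton is the same as the paper's own proof: both reduce the theorem to showing that the state $\psi_\tau$ of Proposition \ref{P: constr KMS} vanishes on $\ker q$ exactly when $\tau$ vanishes on $I_{\un{1}}$, both evaluate $\psi_\tau$ on the generators $a\cdot\prod_{\Bi\in\supp\un{x}}(I-V_\Bi V_\Bi^*)$ by expanding the projection and invoking Lemma \ref{L: combinatoric}, and both then transport the parametrization of Theorem \ref{T: KMS TNP} through the quotient $q$. Where you differ is in precision, and both differences are to your credit. First, your evaluation $\psi_\tau(aQ_{\un{x}})=\prod_{i=1}^n(1-e^{-\la_i\be})\sum_{\un{v}\in\un{x}^\perp}e^{-\sca{\un{v},\un{\la}}\be}\,\tau\al_{\un{v}}(a)$ is the correct one: the paper instead displays $\psi_\tau(aQ_{\un{y}})=\prod_{i=1}^n(1-e^{-\la_i\be})^{-1}\tau(a)$, citing Lemma \ref{L: combinatoric} ``for any $\un{y}\leq\un{1}$'', but that lemma's conclusion ($=k_{\un{0}}$) holds literally only for $\un{y}=\un{1}$; for general $\un{y}$ the alternating sum returns the sum of all $k_{\un{w}}$ with $\un{w}\in F_m\cap\un{y}^{\perp}$, which is precisely your extra sum over $\un{x}^\perp$. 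This slip does not damage the paper's argument --- as you note, for $a\in I_{\un{x}}$ and $\un{v}\in\un{x}^\perp$ one has $\al_{\un{v}}(a)\in\big(\bigcap_{\Bi\in\supp\un{x}}\ker\al_\Bi\big)^\perp\subseteq I_{\un{1}}$, so every term dies when $\tau|_{I_{\un{1}}}=0$, while the converse direction only uses $\un{y}=\un{1}$ where the formula is correct --- but your version is the one that is actually true. Second, the paper asserts without justification that vanishing of $\psi_\tau$ on the generators implies vanishing on the ideal they generate; since a state need not annihilate an ideal merely because it annihilates a generating set, this is a genuine (if standard) gap, and your GNS/cyclicity argument --- checking $\psi_\tau(d^*Q_{\un{x}}a^*aQ_{\un{x}}d)=0$ on monomials $d=V_{\un{r}}cV_{\un{s}}^*$ by splitting on whether $\supp\un{r}$ meets $\supp\un{x}$, and using the $\al_{\un{r}}$-invariance of $I_{\un{x}}$ for $\un{r}\in\un{x}^\perp$ to collapse back to a generator value --- supplies exactly the missing step. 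In short, your proof is the paper's proof carried out in full.
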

\begin{proof}
Recall that $\N\O(A,\al)$ is a quotient of $\N\T(A,\al)$ and that we have already obtained a parametrization of the KMS states on $\N\T(A,\al)$.
Thus it suffices to show that a tracial state $\tau$ on $A$ vanishes on $I_{\un{1}}$ if and only if $\psi_\tau$ of Proposition \ref{P: constr KMS} defines a KMS state $\phi_\tau$ on $\N\O(A,\al)$ by $\phi_\tau q = \psi_\tau$ for the quotient map $q \colon \N\T(A,\al) \to \N\O(A,\al)$.
Since the action $\si$ respects the quotient mapping, this is equivalent to showing that $\psi_\tau$ vanishes on the elements $a \cdot \prod_{i \in \supp \un{y}} (I - V_\Bi V_\Bi^*)$ for all $a \in I_{\un{y}}$, and for all $\un{y} > 0$.

To this end fix $\un{y} \in \bZ_+^n$.
Since $I_{\un{y}} = I_{\un{y} \wedge \un{1}}$, then without loss of generality we may assume that $\un{y} \leq \un{1}$ with support $\supp \un{y} = \{\bo{1}, \dots, \bo{d}\}$.
Then a computation similar to that of Theorem \ref{T: KMS TNP} gives that
\[
\psi_\tau(a \cdot \prod_{i=1}^d (I - V_{\Bi} V_{\Bi^*}))
=
\prod_{i=1}^n (1 - e^{-\be_i})^{-1} \tau(a).
\]
The proof follows in exactly the same way by replacing $\prod_{i=1}^n (I - V_{\Bi} V_{\Bi^*})$ with $\prod_{i=1}^d (I - V_{\Bi} V_{\Bi^*})$ and $\un{1}$ with $\un{y}$.
Recall here that Lemma \ref{L: combinatoric} holds for any $\un{y} \leq \un{1}$ .

For the forward implication fix $\tau$ be a state on $A$ that vanishes on $I_{\un{1}}$.
Since $I_{\un{y}} \subseteq I_{\un{1}}$ for all $\un{y} > 0$, then $\tau$ vanishes on $I_{\un{y}}$ as well.
The above equation then shows that $\psi_\tau$ vanishes on the ideals generated by $a \cdot \prod_{\Bi \in \supp \un{y}} (I - V_\Bi V_\Bi^*)$ for all $a \in I_{\un{y}}$.

Conversely if $\phi$ is a $(\si,\be)$-KMS state on $\N\O(A,\al)$ then $\phi q$ is a $(\si,\be)$-KMS state on $\N\T(A,\al)$ where $q \colon \N\T(A,\al) \to \N\O(A,\al)$ is the quotient $*$-epimorphism.
Hence we obtain that $\phi q = \psi_\tau$ for some tracial state $\tau$ of $A$, by Theorem \ref{T: KMS TNP}.
Therefore $\psi_\tau$ vanishes on $a \cdot \prod_{i=1}^n (I - V_{\Bi} V_{\Bi^*})$ for all $a \in I_{\un{1}}$.
Then the above computation shows that $\tau$ vanishes on $I_{\un{1}}$, and the proof is complete.
\end{proof}

\subsubsection{Ground states and KMS${}_\infty$ states}

Similar computations give the following analogues for the ground states and the KMS${}_\infty$ states on $\N\O(A,\al)$. In particular for the KMS${}_\infty$ states we make use of Theorem \ref{T: KMS CNP}.

\begin{corollary}\label{C: ground states CNP}
Let $\al \colon \bZ_+^n \to \End(A)$ be a unital C*-dynamical system.
The map $\tau \mapsto \vpi_\tau$ with
\[
\vpi_\tau(U_{\un{x}} a U_{\un{y}}^*) =
\begin{cases}
\tau(a) & \text{ for } \un{x}=\un{0}=\un{y},\\
0 & \text{ otherwise},
\end{cases}
\]
is an affine homeomorphism from the state space $S(A)$ (resp. tracial state space $T(A)$) onto the ground states (resp. KMS${}_\infty$ states) of $\N\O(A,\al)$.
\end{corollary}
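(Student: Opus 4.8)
The plan is to transport the two characterizations already obtained for $\N\T(A,\al)$, namely Proposition \ref{P: ground TNP} for ground states and Proposition \ref{P: infty TNP} for KMS${}_\infty$ states, across the canonical quotient $q \colon \N\T(A,\al) \to \N\O(A,\al)$, exactly as Theorem \ref{T: KMS CNP} transports Theorem \ref{T: KMS TNP}. Since $q$ intertwines the two copies of $\si$, pulling back along $q$ sends ground states (resp.\ KMS${}_\infty$ states) of $\N\O(A,\al)$ to ground states (resp.\ KMS${}_\infty$ states) of $\N\T(A,\al)$. Hence if $\phi$ is such a state on $\N\O(A,\al)$, then $\phi q$ has the form prescribed by Proposition \ref{P: ground TNP} (resp.\ Proposition \ref{P: infty TNP}) with $\tau = \phi|_A$, which is a state (resp.\ tracial state) of $A$; reading this off the generators $U_{\un{x}} a U_{\un{y}}^* = q(V_{\un{x}} a V_{\un{y}}^*)$ yields the displayed formula and shows that $\tau \mapsto \vpi_\tau$ is injective. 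Affinity and w*-continuity in both directions are routine and follow verbatim from \cite[Proof of Theorem 6.1]{LRRW13}, as in Theorem \ref{T: KMS TNP}.

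It remains to produce, for each state (resp.\ tracial state) $\tau$ of $A$, a ground (resp.\ KMS${}_\infty$) state $\vpi_\tau$ of $\N\O(A,\al)$ realising the formula. For the ground states I would start from the state $\psi_\tau$ of $\N\T(A,\al)$ built in Proposition \ref{P: ground TNP} and show it factors through $q$, that is, that it annihilates $\ker q$. Since $\ker q$ is the ideal generated by the Cuntz--Nica elements $a \cdot \prod_{\Bi \in \supp \un{x}}(I - V_\Bi V_\Bi^*)$ with $a \in I_{\un{x}}$ and $\un{x} > \un{0}$, and since $\si$ descends through $q$, it suffices to evaluate $\psi_\tau$ on these generators. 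Expanding the product by Nica covariance into $\sum_{\un{0} \leq \un{e} \leq \un{x} \wedge \un{1}} (-1)^{|\un{e}|} V_{\un{e}} \al_{\un{e}}(a) V_{\un{e}}^*$, the computation is then governed by the combinatorial identity of Lemma \ref{L: combinatoric}, precisely as in the displayed calculation of Theorem \ref{T: KMS CNP}.

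For the KMS${}_\infty$ states I would instead invoke Theorem \ref{T: KMS CNP} directly: for $\la_k \be > 0$ the finite-temperature $(\si,\be)$-KMS states of $\N\O(A,\al)$ are the states $\phi_\tau$ there, and I would let $\be \longrightarrow \infty$ and identify the w*-limit with $\vpi_\tau$. The estimate $|\sum_{\un{w} > \un{0}} e^{-\sca{\un{w}, \un{\la}}\be} \tau\al_{\un{w}}(a)| \leq \nor{a}(-1 + \prod_{i=1}^n (1 - e^{-\la_i\be})^{-1})$ from the proof of Proposition \ref{P: infty TNP} shows that only the $\un{w} = \un{0}$ term survives in the limit, so that $\phi_\tau \to \vpi_\tau$ pointwise on the dense $*$-subalgebra of monomials; w*-compactness then upgrades this to convergence of states. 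Being a KMS${}_\infty$ state is preserved under such limits, which gives surjectivity, while the trace condition, being w*-closed, identifies the image.

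The crux, and the step I expect to be the main obstacle, is the descent computation of the second paragraph: applying $\psi_\tau$ to a Cuntz--Nica generator collapses, via Lemma \ref{L: combinatoric}, to the single term indexed by $\un{e} = \un{0}$ and so returns the value $\tau(a)$, which must be reconciled with the vanishing of $a \cdot \prod_{\Bi \in \supp \un{x}}(I - U_\Bi U_\Bi^*)$ in $\N\O(A,\al)$. This is exactly the mechanism that in Theorem \ref{T: KMS CNP} controls the behaviour of $\tau$ on the ideal $I_{\un{1}}$, and securing this compatibility across the prescribed state space is where the argument must be handled with care; once it is in place, the remaining verifications (that each $\vpi_\tau$ is positive and unital, and that the two assignments are mutually inverse and affine homeomorphisms) are formal and parallel those of Theorem \ref{T: KMS TNP} and Proposition \ref{P: ground TNP}.
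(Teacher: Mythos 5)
Your route coincides with the one the paper indicates (the paper gives no written proof beyond ``similar computations \dots\ for the KMS${}_\infty$ states we make use of Theorem \ref{T: KMS CNP}''), and your first paragraph --- pulling back along $q$ to get the formula and injectivity --- is sound. But the step you yourself flag as the crux is a genuine gap, and it cannot be closed for arbitrary $\tau \in S(A)$. For $a \in I_{\un{x}}$, expanding by double commutation and applying the ground-state formula of Proposition \ref{P: ground TNP} gives
\[
\psi_\tau\Big(a \cdot \prod_{\Bi \in \supp \un{x}} (I - V_\Bi V_\Bi^*)\Big)
= \sum_{\substack{\un{0} \leq \un{e} \leq \un{1} \\ \supp \un{e} \subseteq \supp \un{x}}} (-1)^{|\un{e}|} \, \psi_\tau\big(V_{\un{e}} \al_{\un{e}}(a) V_{\un{e}}^*\big)
= \tau(a),
\]
since every term with $\un{e} \neq \un{0}$ vanishes outright --- note that Lemma \ref{L: combinatoric} is not what is at work here (it is needed in the finite-$\be$ computation of Theorem \ref{T: KMS CNP}, where all terms contribute). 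Hence $\psi_\tau$ annihilates $\ker q$ only if $\tau$ vanishes on every $I_{\un{x}}$, equivalently on $I_{\un{1}}$. Reading the same identity inside $\N\O(A,\al)$, where $a \cdot \prod_{\Bi \in \supp \un{x}}(I - U_\Bi U_\Bi^*) = 0$, shows the displayed $\vpi_\tau$ is not even a well-defined functional unless $\tau(I_{\un{1}}) = 0$: any functional killing all monomials $U_{\un{e}} b U_{\un{e}}^*$ with $\un{e} \neq \un{0}$ must kill $I_{\un{1}}$. Your own pull-back argument yields the same necessity, since $\phi q = \psi_\tau$ must vanish on $\ker q$.

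Consequently surjectivity from all of $S(A)$ (resp.\ $T(A)$) is unobtainable, and no amount of care rescues it: for an injective unital system $I_{\un{1}} = A$ and the $U_\Bi$ are unitaries (Proposition \ref{P: injectivity}), so for $\la_i > 0$ the function $r + it \mapsto \vpi(U_\Bi \si_{r+it}(U_\Bi^*)) = e^{-i\la_i(r+it)}\vpi(U_\Bi U_\Bi^*)$ has modulus $e^{\la_i t}$ and $\N\O(A,\al)$ admits no ground states at all, although $S(A) \neq \mt$; already for $A = \bC$, $n=1$, $\al = \id$, the formula assigns $1$ to $I$ but $0$ to $U I U^* = I$. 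What your machinery actually proves --- consistently with Theorem \ref{T: KMS CNP} and the remark in the introduction --- is the affine homeomorphism onto the states (resp.\ tracial states) of $A$ that vanish on $I_{\un{1}}$. Under that hypothesis your plan goes through, but the descent of $\psi_\tau$ needs one further check you omit: vanishing on the generators is weaker than vanishing on the ideal they generate. In the Fock picture, $\psi_\tau\big(f \cdot a\prod_{\Bi \in \supp \un{x}}(I - V_\Bi V_\Bi^*) \cdot g\big)$ for monomials $f = V_{\un{s}} c V_{\un{t}}^*$, $g = V_{\un{u}} b V_{\un{v}}^*$ reduces to a multiple of $\tau(c\,\al_{\un{u}}(a)\,b)$ with $\un{u} \in \un{x}^\perp$, and this vanishes because $I_{\un{x}}$ is $\al_{\un{u}}$-invariant for $\un{u} \in \un{x}^\perp$ and $I_{\un{x}} \subseteq I_{\un{1}}$. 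Similarly, your $\be \longrightarrow \infty$ limit of the states of Theorem \ref{T: KMS CNP} only produces $\vpi_\tau$ for traces $\tau$ annihilating $I_{\un{1}}$ --- again exactly the corrected parametrization, not the one stated.
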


\subsubsection{Applications}

As in the case of $\N\T(A,\al)$, our analysis can be used to treat the particular cases when $\la_k=1$ for all $k=1, \dots, n$, or when $\la_k=0$ for some $k=1, \dots, n$.

\begin{example}
In the case where $\la_1 = \dots = \la_n = 1$, let the action $\si \colon \bR \to \Aut(\N\O(A,\al))$ be given by $\si_t = \ga_{(\exp(it), \dots, \exp(it))}$.
Then for every tracial state $\tau$ of $A$ that vanishes on $I_{\un{1}}$ and $\be>0$ there is a $(\si,\be)$-KMS state $\psi_\tau$ of $\N\O(A,\al)$ such that
\[
\psi_\tau(U_{\un{x}} a U_{\un{y}}^*)
 =
\de_{\un{x}, \un{y}} \cdot
e^{-|\un{x}|\be} \cdot (1 - e^{-\be})^n
\sum_{\un{w} \in \bZ_+^n} e^{-|\un{w}|\be} \tau\al_{\un{w}}(a),
\]
for all $a\in A$ and $\un{x}, \un{y} \in \bZ_+^n$.
This representation is a parametrization of the $(\si,\be)$-KMS states.
\end{example}

Now let us examine the case where some $\la_k$ are zeroes.
Let us assume that $\la_{d+1} = \dots = \la_n =0$ and $\la_1, \dots, \la_d \neq 0$.
We aim to show that the $(\si,\be)$-KMS states on $\N\O(A,\al)$ are determined by the $(\si',\be)$-KMS states on $\N\O(\Om,\om)$ for a suitably chosen C*-dynamical system $\om \colon \bZ_+^d \to \End(\Om)$ and for
\[
\si_t' = \ga_{(\exp(i\la_1t), \dots, \exp(i\la_d t))}'.
\]
To this end let the C*-algebra
\[
\Om= \ol{\spn}\{U_{\un{x}} a U_{\un{y}}^* \mid a\in A, \supp \un{x}, \supp \un{y} \subseteq \{\bo{d+1}, \dots, \bo{n}\} \}.
\]
Observe that $\Om$ is a C*-subalgebra of $\N\O(A,\al)$.
For $\Bi = \{\bo{1}, \dots, \bo{d}\}$ we can extend the $*$-endomorphism $\al_\Bi$ to a $*$-endomorphism $\om_{\Bi}$ of $\Om$ defined by
\[
\om_{\Bi} (f) = U_{\Bi}^* f U_{\Bi}, \foral f \in \Om.
\]
Then $\si \colon \bR^n \to \Aut(\N\O(A,\al))$ defines an action $\si' \colon \bR^d \to \Aut(\N\O(\Om,\om))$.

\begin{proposition}\label{P: d-states CNP}
With the aforementioned notation, a state $\vpi$ is a $(\si, \be)$-KMS state on $\N\O(A,\al)$ if and only if $\vpi$ is a $(\si',\be)$-KMS state on $\N\O(\Om,\om)$.
\end{proposition}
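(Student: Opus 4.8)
The plan is to transport the argument of Proposition \ref{P: d-states TNP} to the Cuntz-Nica-Pimsner level. There are two ingredients: first identify $\N\O(A,\al)$ with $\N\O(\Om,\om)$ so that a state on the one is literally a state on the other, and then match the two KMS conditions using the splitting $\un{x} = \un{x}_d + \un{x}_{n-d}$. The decisive structural fact, exactly as for $Z \subseteq \N\T(A,\al)$, is that $\si_t$ fixes $\Om$ pointwise: an element $f = U_{\un{x}_{n-d}} a U_{\un{y}_{n-d}}^*$ has $x_k = y_k = 0$ for $k \leq d$, whence $\si_t(f) = f$ and $\si'_t|_\Om = \id_\Om$, while $\si_t$ acts on $U_{\un{x}_d} f U_{\un{y}_d}^*$ only through the scalar $e^{i\sum_{k=1}^d (x_k - y_k) t}$.

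For the identification I would appeal to the gauge-invariant uniqueness theorem for $\N\O$. The $\bT^n$-gauge action $\ga$ of $\N\O(A,\al)$ restricts to a $\bT^d$-gauge action for the covariant pair $(\id_\Om, U)$ of $\om \colon \bZ_+^d \to \End(\Om)$, namely $\ga'_{\un{z}} = \ga_{(z_1, \dots, z_d, 1, \dots, 1)}$, which fixes $\Om$ and scales each $U_{\bo{i}}$ ($i \leq d$) by $z_i$; and $\id_\Om$ is faithful. So once $(\id_\Om, U)$ is known to be Cuntz-Nica covariant for $\om$, the theorem gives that the induced representation of $\N\O(\Om,\om)$ is faithful. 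Since $\Om$ together with $U_{\bo{1}}, \dots, U_{\bo{d}}$ generate $\N\O(A,\al)$, because $U_{\un{x}} a U_{\un{y}}^* = U_{\un{x}_d} f U_{\un{y}_d}^*$ with $f \in \Om$, this yields the identification $\N\O(A,\al) = \N\O(\Om,\om)$.

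Granting the identification, the equivalence of the KMS conditions is the verbatim computation of Proposition \ref{P: d-states TNP} with $V$ replaced by $U$ and $Z$ by $\Om$. In the forward direction one writes a product of monomials of $\N\O(\Om,\om)$ as $U_{\un{x}_d} f U_{\un{y}_d}^* \cdot U_{\un{z}_d} g U_{\un{w}_d}^*$ with $f = U_{\un{x}_{n-d}} a U_{\un{y}_{n-d}}^*$ and $g = U_{\un{z}_{n-d}} b U_{\un{w}_{n-d}}^*$ spanning a dense subset of $\Om$; recombining the factors recovers $U_{\un{x}} a U_{\un{y}}^* \cdot U_{\un{z}} b U_{\un{w}}^*$, to which the $(\si,\be)$-KMS condition applies and produces exactly the scalar $e^{-\sum_{k=1}^d (x_k - y_k)\la_k\be}$ required by the $(\si',\be)$-KMS condition; the converse simply reverses this chain of equalities.

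I expect the main obstacle to be the verification that $(\id_\Om, U)$ is Cuntz-Nica covariant for $\om$, that is, the identification of the ideals $I^\om_{\un{x}} \lhd \Om$ for $\un{x} \in \bZ_+^d$ and the check that $f \cdot \prod_{\bo{i} \in \supp \un{x}} (I - U_{\bo{i}} U_{\bo{i}}^*) = 0$ for $f \in I^\om_{\un{x}}$. The relation $\om_{\bo{i}}(U_{\un{x}_{n-d}} a U_{\un{y}_{n-d}}^*) = U_{\un{x}_{n-d}} \al_{\bo{i}}(a) U_{\un{y}_{n-d}}^*$ shows that the $\om$-ideals are governed by the original $\al_{\bo{i}}$, and the covariance relation should then descend from the Cuntz-Nica covariance already holding in $\N\O(A,\al)$. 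Alternatively one can attempt to sidestep this point by realising both $\N\O(\Om,\om)$ and $\N\O(A,\al)$ as the corresponding quotient of the identified Toeplitz algebra $\N\T(Z,\zeta) = \N\T(A,\al)$ and reducing directly to Proposition \ref{P: d-states TNP}, at the cost of checking compatibility of the quotients and that the relevant states factor through the quotient map $q$.
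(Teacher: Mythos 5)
Your overall architecture coincides with the paper's: identify $\N\O(\Om,\om)$ with $\N\O(A,\al)$ through the gauge invariant uniqueness theorem (faithfulness of $\id_\Om$, the inherited $\bT^d$-gauge action, and generation via $U_{\un{x}} a U_{\un{y}}^* = U_{\un{x}_d} f U_{\un{y}_d}^*$), and then rerun the computation of Proposition \ref{P: d-states TNP} with $V$ replaced by $U$ and $Z$ by $\Om$. The paper does exactly this, and like you it treats the KMS computation as a verbatim repetition once the identification is in place.

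However, there is a genuine gap at the point you yourself flag as the main obstacle: the Cuntz-Nica covariance of the pair $(\id_\Om, U)$ for $\om \colon \bZ_+^d \to \End(\Om)$. You assert that, since $\om_{\Bi}$ acts on monomials through $\al_{\Bi}$, the covariance relation ``should descend'' from the covariance already holding in $\N\O(A,\al)$. This does not work as stated: the ideals $J_{\un{w}} \lhd \Om$ of the system $(\Om,\om)$ are defined through the kernels of the maps $\om_{\Bi}$ acting on all of $\Om$ --- whose elements are norm limits of sums of monomials $U_{\un{x}_{n-d}} a U_{\un{y}_{n-d}}^*$ --- and neither $\ker \om_{\Bi}$ nor the annihilator $\big(\bigcap_{\Bi \in \supp \un{w}} \ker\om_{\Bi}\big)^\perp$ taken inside $\Om$ is visibly generated by the corresponding objects for $\al$ inside $A$. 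In particular an element $f \in J_{\un{w}}$ need not be expressible through elements of the ideals $I_{\un{x}} \lhd A$, so the relations $a \cdot \prod_{\Bi \in \supp\un{x}}(I - U_\Bi U_\Bi^*) = 0$ for $a \in I_{\un{x}}$ give no direct handle on $f \cdot \prod_{\Bi \in \supp \un{w}}(I - U_{\Bi}U_{\Bi}^*)$. This is precisely where the paper's proof does its real work: it establishes the Claim that for $\un{w} \in \bZ_+^d$ one has $f \in \big(\bigcap_{\Bi \in \supp \un{w}} \ker\om_{\Bi}\big)^\perp$ if and only if $f \cdot \prod_{\Bi \in \supp \un{w}} (I - U_{\Bi}U_{\Bi}^*) = 0$. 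The implication needed for covariance (membership in the annihilator forces the product to vanish) is the harder direction, and the paper proves it by an auxiliary construction: form $\fA = \ca(\Om,x) = \Om x + \Om(1-x)$ with $x = \prod_{\Bi \in \supp \un{w}}(I - U_\Bi U_\Bi^*)$, extend each $\om_\Bi$ to $\wt\om_\Bi$ on $\fA$, observe that $x$, and hence $fx$, lies in $\bigcap_{\Bi \in \supp\un{w}}\ker\wt\om_\Bi$, and then show that $fx$ is also orthogonal to this ideal, forcing $fx=0$. Your alternative suggestion --- realising both Cuntz algebras as quotients of the identified Toeplitz algebra $\N\T(Z,\zeta)=\N\T(A,\al)$ --- runs into the same problem in different clothing: ``compatibility of the quotients'' amounts to the same comparison of the ideals $J_{\un{w}}$ with the defining relations of $\N\O(A,\al)$. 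Without an argument of this kind the identification $\N\O(\Om,\om)=\N\O(A,\al)$, and with it the whole proposition, remains unproved.
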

\begin{proof}
The proof follows in the same way as in Proposition \ref{P: d-states TNP} once we show that $\N\O(\Om,\om) = \N\O(A,\al)$.
To this end we will show that the representation $(\id_\Om, U)$ defines a faithful Cuntz-Nica covariant pair of $\N\O(\Om,\om)$.
It is evident that $\id_\Om$ is injective on $\Om$ and that $(\id_\Om, U)$ admits a gauge action inherited from the gauge action of $\N\O(A,\al)$.
It remains to show that $(\id_\Om, U)$ is also a Cuntz-Nica covariant pair.

\medskip

\noindent \textbf{Claim.} Let $\un{w} \in \bZ_+^d$.
Then we have that $f \in (\bigcap_{\bo{i} \in \supp \un{w}} \ker\om_{\Bi})^\perp \subseteq \Om$ if and only if $f \cdot \prod_{\Bi \in \supp \un{w}} (I - U_{\Bi}U_{\Bi}^*) = 0$.

\medskip

\noindent \textbf{Proof of the Claim.} For convenience let $x = \prod_{\Bi \in \supp \un{w}} (I - U_{\Bi}U_{\Bi}^*)$.
Without loss of generality assume that $\supp \un{w} = \{\bo{1}, \dots, \bo{m}\}$ with $m \leq d$.
Let $g \in \bigcap_{\bo{i} \in \supp \un{w}} \ker\om_{\Bi}$ and $f \in \Om$ such that $f x= 0$.
Then we get that
\begin{align*}
0
& =
f x \cdot g \\
& =
f \cdot \Big( \prod_{i=1}^{m-1} (I - U_{\Bi}U_{\Bi}^*) \Big) \cdot (I - U_{\bo{m}}U_{\bo{m}}^*) g \\
& =
f \cdot \Big( \prod_{i=1}^{m-1} (I - U_{\Bi}U_{\Bi}^*) \Big) \cdot (g - U_{\bo{m}} \om_{\bo{m}}(g) U_{\bo{m}}^*) \\
& =
f \cdot \prod_{i=1}^{m-1} (I - U_{\Bi}U_{\Bi}^*) \cdot g,
\end{align*}
since $g \in \ker \om_{\bo{m}}$.
Inductively we obtain that $f g = 0$, which shows that $f$ is in $(\bigcap_{\bo{i} \in \supp \un{w}} \ker\om_{\Bi})^\perp$.

Conversely suppose that $f \in (\bigcap_{\bo{i} \in \supp \un{w}} \ker\om_{\Bi})^\perp$.
Note that $x$ is a projection that commutes with $\Om$.
Let us form the C*-algebra
\[
\fA = \ca(\Om, x) = \Om x + \Om(1-x)
\]
inside $\N\O(A,\al)$.
We will denote by $\wt\om_{\Bi}$ the extension of $\om_{\Bi}$ to $\fA$.
Then we get that
\[
x \in \bigcap_{\Bi \in \supp \un{w}} \ker\wt\om_{\Bi} \subseteq \fA
\]
as well.
As a consequence we have that
\[
fx \in \bigcap_{\Bi \in \supp \un{w}} \ker\wt\om_{\Bi} \subseteq \fA.
\]
On the other hand every $\wt{g} \in \fA$ attains an orthogonal decomposition $\wt{g} = g x + \wt{g} (1-x)$ for some $g \in \Om$.
We can then write
\[
g = gx + g(1-x) = \wt{g} - \wt{g}(1-x) + g(1-x).
\]
Let $\wt{g} \in \bigcap_{\Bi \in \supp \un{w}} \ker\wt\om_{\Bi}$.
Then we obtain
\begin{align*}
\om_{\Bi}(g)
& =
U_\Bi^* g U_{\Bi} \\
& =
U_{\Bi}^* \wt{g}U_{\Bi} - U_{\Bi}^* \wt{g} (1-x) U_{\Bi} +  U_{\Bi}^*g(1-x)U_{\Bi} \\
& =
\wt\om_\Bi(\wt{g}) + 0 = 0,
\end{align*}
since $(1-x) U_\Bi = 0$ for all $\Bi \in \supp \un{w}$.
Therefore we have that $g \in \bigcap_{\Bi \in \supp \un{w}} \ker\om_\Bi \subseteq \Om$.
As a consequence we obtain that
\[
f x \wt{g} = f x g x + fx\wt{g}(1-x)= x fg x = 0,
\]
since $f$ is perpendicular to $\bigcap_{\Bi \in \supp \un{w}} \ker\om_\Bi$.
Thus $fx \in (\bigcap_{\Bi \in \supp \un{w}} \ker\wt\om_{\Bi})^\perp$ as well.
This shows that $fx=0$ and the proof of the claim is complete.

\medskip

For distinction let us denote by $J_{\un{w}}$ the ideals of $\om \colon \bZ_+^d \to \End(\Om)$ with $\supp \un{w} = \{\bo{1}, \dots, \bo{d} \}$.
Now let $f \in J_{\un{w}}$ for $\un{w} \in \bZ_+^d$ with $\supp \un{w} = \{\bo{1}, \dots, \bo{m}\}$.
Then in particular we get that $f \in (\bigcap_{\bo{i} \in \supp \un{w}} \ker\om_{\Bi})^\perp$.
By the claim we obtain
\[
\id_\Om(f) \cdot \prod_{\Bi \in \supp \un{w}} (I - U_{\Bi}U_{\Bi}^*) = f \cdot \prod_{\Bi \in \supp \un{w}} (I - U_{\Bi}U_{\Bi}^*) = 0
\]
which completes the proof.
\end{proof}

\section{Appendix}\label{S: app}

Multivariable systems over $\bZ_+^n$ admit multivariable gauge actions $\si$ of $\bR^n$.
Surprisingly all the computations of Section \ref{S: KMS} still hold for the action
\[
\si_{\un{t}} = \ga_{(\exp(it_1), \dots, \exp(it_n))}, \text{ for } \un{t} = (t_1, \dots, t_n) \in \bR^n,
\]
by substituting $\la_k \be$ with any $\be_k \in \bR$.
So one may ask why not apply the same analysis in the multivariable context.
That is, for an action $\si \colon \bR^n \to \Aut(\A)$ of a C*-algebra $\A$ and for $\un{0} < \un{\be} \in \bR^n$, one may consider the condition
\[
\psi(ab) = \psi(b \si_{i \un{\be}}(a)),
\]
for all $a, b$ in a norm dense $*$-subalgebra of analytic elements in $\A$.
However the purpose of the KMS condition is to build a specific analytic function $F_{a,b}$ on an (unbounded) domain.
In the one-variable case this is achieved by using the Phragm\'{e}n-Lindel\"{o}f principle for strips.
The application of this idea for tubes in $\bC^n$ requires the insertion of new data.
Here we present a multivariable KMS condition subject to a prescribing set.
The required proofs will follow after this exposition.

\begin{definition}
Let $\un{0} < \un{\be} \in \bR^n$.
A \emph{$\un{\be}$-prescribing set $\La_{\un{\be}}$} is a subset of
\[
C_{\un{\be}}:= \{\un{\ga} = \sum_{k=1}^n \eps_k \be_k \, \bo{k} \mid \eps_k=0,1 \}
\]
such that $\un{0} \notin \La_{\un{\be}}$.
\end{definition}

For the definition of a multivariable KMS state we will require all $3$ data $(\si,\un{\be},\La_{\un{\be}})$.

\begin{definition}
Let $\si\colon \bR^n \to \Aut(\A)$ be a C*-dynamical system and $\La_{\un{\be}}$ be a prescribing set for $\un{0} < \un{\be} \in \bR^n$.
A state $\tau$ of $\A$ is called a \emph{$(\si,\La_{\un{\be}})$-KMS state on $\A$} if for every pair of elements $a,b \in \A$ there is a complex-valued function $F_{a,b}$ that is analytic on the interior of
\[
D = \{z \in \bC^n \mid 0 \leq \im(z_k) \leq \be_k, k=1, \dots, n\}
\]
and continuous and bounded on $D$ such that $F_{a,b}(\un{t}) = \tau(a \si_{\un{t}}(b)$
and
\begin{align*}
F_{a,b}(\un{t} + i \un{\ga}) =
\begin{cases}
\tau(\si_{\un{t}}(b)a) & \text{ when } \un{\ga} \in \La_{\un{\be}}, \\
\tau(a \si_{\un{t}}(b)) & \text{ when } \un{\ga} \notin \La_{\un{\be}},
\end{cases}
\end{align*}
for all $\un{\ga} \in C_{\un{\be}}$, and for all $\un{t} \in \bR^n$.
\end{definition}

The first objective is to show that there are enough analytic elements inside $\A$ for an action $\si$ of $\bR^n$.

\begin{proposition}\label{P: analytic}
Let $\si\colon \bR^n \to \Aut(\A)$ be a C*-dynamical system and let $\tau$ be a state on $\A$.
Then there is a norm-dense $\si$-invariant $*$-subalgebra $\A_{\textup{an}}$ of $\A$ such that for every $a \in \A_{\textup{an}}$ the function $\un{t} \mapsto \si_{\un{t}}(a)$ can be analytically continued to an entire function on $\bC^n$.
\end{proposition}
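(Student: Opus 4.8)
The plan is to imitate the one-variable Gaussian mollification of \cite[Proposition 2.5.22]{BraRob87}, but with an $n$-dimensional heat kernel. For $a \in \A$ and $m \in \bN$ I would set
\[
a_m = \Big(\frac{m}{\pi}\Big)^{n/2} \int_{\bR^n} \si_{\un{t}}(a)\, e^{-m \sca{\un{t},\un{t}}}\, d\un{t},
\]
the integral converging in norm because $\un{t} \mapsto \si_{\un{t}}(a)$ is bounded and strongly continuous while the Gaussian weight is integrable. Since $\si_{\un{s}}$ is a bounded linear map it passes through the Bochner integral, and the substitution $\un{u} = \un{t} + \un{s}$ gives $\si_{\un{s}}(a_m) = (m/\pi)^{n/2}\int_{\bR^n}\si_{\un{u}}(a)\, e^{-m\sca{\un{u}-\un{s},\un{u}-\un{s}}}\, d\un{u}$, whose right-hand side makes sense verbatim when $\un{s}$ is replaced by an arbitrary $\un{z} \in \bC^n$. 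Thus the candidate entire extension of the orbit map of $a_m$ is
\[
f_m(\un{z}) = \Big(\frac{m}{\pi}\Big)^{n/2} \int_{\bR^n} \si_{\un{u}}(a)\, e^{-m\sca{\un{u}-\un{z},\un{u}-\un{z}}}\, d\un{u}, \qquad \un{z} \in \bC^n.
\]

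First I would verify that $f_m$ is entire. Writing $\un{z} = \un{x} + i\un{y}$, the elementary identity $|e^{-m\sca{\un{u}-\un{z},\un{u}-\un{z}}}| = e^{m\sca{\un{y},\un{y}}}\, e^{-m\sca{\un{u}-\un{x},\un{u}-\un{x}}}$ shows that on any compact subset of $\bC^n$ the integrand is dominated by a single integrable function of $\un{u}$. Holomorphy then follows by differentiating under the integral sign, or, coordinate by coordinate, from Morera's theorem together with Hartogs' theorem on separate analyticity; since the integrand is entire in $\un{z}$ for each fixed $\un{u}$, this presents no difficulty beyond the uniform domination just recorded. This is the step where the several-variables nature of the problem actually enters, and I expect it to be the only genuinely analytic point of the argument.

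Next I would establish density. Since $(m/\pi)^{n/2}\int_{\bR^n} e^{-m\sca{\un{t},\un{t}}}\, d\un{t} = 1$, we have
\[
\nor{a_m - a} \leq \Big(\frac{m}{\pi}\Big)^{n/2}\int_{\bR^n} \nor{\si_{\un{t}}(a) - a}\, e^{-m\sca{\un{t},\un{t}}}\, d\un{t}.
\]
Given $\eps > 0$, strong continuity of $\si$ at $\un{0}$ provides $\de > 0$ with $\nor{\si_{\un{t}}(a) - a} < \eps$ whenever $\sca{\un{t},\un{t}} < \de^2$; splitting the integral over $\{\sca{\un{t},\un{t}} < \de^2\}$ and its complement, the first piece is at most $\eps$ and the second is bounded by $2\nor{a}\,(m/\pi)^{n/2}\int_{\sca{\un{t},\un{t}} \geq \de^2} e^{-m\sca{\un{t},\un{t}}}\, d\un{t}$, which tends to $0$ as $m \to \infty$. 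Hence $a_m \to a$ in norm.

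Finally I would take $\A_{\textup{an}}$ to be the set of all $a \in \A$ whose orbit map $\un{t} \mapsto \si_{\un{t}}(a)$ admits an entire extension to $\bC^n$. This set contains every $a_m$, hence is norm-dense by the previous paragraph. It is a $*$-subalgebra: for $a, b \in \A_{\textup{an}}$ with entire extensions $F, G$, the maps $\un{z} \mapsto F(\un{z}) + G(\un{z})$, $\un{z} \mapsto F(\un{z})G(\un{z})$ and $\un{z} \mapsto F(\overline{\un{z}})^*$ are again entire and extend the orbit maps of $a + b$, $ab$ and $a^*$ respectively. It is $\si$-invariant, since $\si_{\un{s}}(a)$ has orbit map extended by the entire function $\un{z} \mapsto F(\un{z} + \un{s})$. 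This produces the required norm-dense $\si$-invariant $*$-subalgebra $\A_{\textup{an}}$.
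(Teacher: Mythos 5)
Your proposal is correct and follows essentially the same route as the paper: the $n$-dimensional Gaussian mollification $a_m$, the complexified Gaussian kernel as the candidate entire extension $f_m$, and the same splitting of the integral over $\{|\un{t}|<\de\}$ and its complement for norm density. The only cosmetic differences are that the paper establishes well-definedness and analyticity of $f_m$ by induction on coordinates, reducing to the one-variable results of Bratteli--Robinson, where you invoke uniform domination on compacts plus Morera/Hartogs directly, and that you additionally spell out why the set of all analytic elements is a $\si$-invariant $*$-subalgebra, a point the paper leaves implicit.
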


For such analytic elements $a$ in $\A$ we write $\si_{\un{z}}(a) := f_a(\un{z})$. We can then characterize the $(\si,\La_{\un{\be}})$-KMS states on $\A$ by the following \emph{$(\si,\La_{\un{\be}})$-KMS condition}.

\begin{theorem}\label{T: KMS char}
Let $\si\colon \bR^n \to \Aut(\A)$ be a C*-dynamical system and let $\tau$ be a state on $A$.
Let $\La_{\un{\be}}$ be a prescribing set for $\un{0} < \un{\be} \in \bR^n$.
Then $\tau$ is a $(\si,\La_{\un{\be}})$-KMS state if and only if for all $\un{\ga} \in C_{\un{\be}}$ we have that
\[
\tau(a \si_{i\un{\ga}}(b)) =
\begin{cases}
\tau(b a) & \text{ when } \un{\ga} \in \La_{\un{\be}}, \\
\tau(ab) & \text{ when } \un{\ga} \notin \La_{\un{\be}},
\end{cases}
\]
for all $a,b$ in a norm-dense $\si$-invariant $*$-subalgebra of $\A_{\text{an}}$.
\end{theorem}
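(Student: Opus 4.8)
The plan is to run both implications on the norm-dense $\si$-invariant $*$-subalgebra $\A_{\text{an}}$ of Gaussian-smoothed analytic elements produced by Proposition \ref{P: analytic}, comparing the prescribed function $F_{a,b}$ with the entire function $G_{a,b}(\un z):=\tau(a\si_{\un z}(b))$, and then passing to arbitrary $a,b\in\A$ by approximation. The decisive structural input is that one can transfer boundary information between the $2^n$ distinguished faces $\{\im\un z=\un\ga\}_{\un\ga\in C_{\un\be}}$ of the tube $D$ and its interior by applying one-complex-variable arguments in each coordinate separately. For $a\in\A_{\text{an}}$ the Gaussian smoothing gives an estimate of the form $\nor{\si_{\un z}(a)}\le\nor{a}\,e^{m|\im\un z|^2}$, so that $G_{a,b}$ is entire and, since the imaginary parts are confined to the box $\prod_k[0,\be_k]$, bounded on $D$; moreover $\nor{\si_{\un t}(a)}=\nor{a}$ for $\un t\in\bR^n$.

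For the forward implication I fix $a,b\in\A_{\text{an}}$ and compare $F_{a,b}$ with $G_{a,b}$. Both are holomorphic on $\operatorname{int}D$, continuous and bounded on $D$, and coincide on the distinguished corner $\bR^n=\{\im\un z=\un0\}$, where both equal $\un t\mapsto\tau(a\si_{\un t}(b))$. I claim $F_{a,b}=G_{a,b}$ on all of $D$. This follows from a boundary-uniqueness statement proved by induction on $n$: a bounded function holomorphic on the open tube $\prod_kS_k$ (with $S_k=\{0<\im z_k<\be_k\}$) and continuous on $D$ is determined by its restriction to $\bR^n$. The base case $n=1$ is Schwarz reflection across $\{\im z_1=0\}$ followed by the identity theorem; in the inductive step one keeps $z_2,\dots,z_n$ in the open strips, applies the one-variable uniqueness in $z_1$, and then applies the inductive hypothesis to the bottom-line values, which for each fixed real $z_1$ are bounded holomorphic in $(z_2,\dots,z_n)$ with vanishing corner value. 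Applying this to $h=F_{a,b}-G_{a,b}$ gives $F_{a,b}=G_{a,b}$; evaluating at $\un z=i\un\ga$ and reading off the prescribed boundary value of $F_{a,b}$ at $\un t=\un0$ yields $\tau(a\si_{i\un\ga}(b))=\tau(ba)$ when $\un\ga\in\La_{\un\be}$ and $\tau(a\si_{i\un\ga}(b))=\tau(ab)$ otherwise, which is exactly the asserted condition.

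For the converse I assume the algebraic condition on $\A_{\text{an}}$. For $a,b\in\A_{\text{an}}$ I set $F_{a,b}:=G_{a,b}$; this is holomorphic on $\operatorname{int}D$, continuous and bounded on $D$, and satisfies $F_{a,b}(\un t)=\tau(a\si_{\un t}(b))$. To compute its value on a face I use that $\si_{\un t}(b)\in\A_{\text{an}}$ and apply the hypothesis to the pair $(a,\si_{\un t}(b))$, together with $\si_{i\un\ga}\si_{\un t}=\si_{\un t+i\un\ga}$, obtaining $F_{a,b}(\un t+i\un\ga)=\tau(\si_{\un t}(b)a)$ for $\un\ga\in\La_{\un\be}$ and $F_{a,b}(\un t+i\un\ga)=\tau(a\si_{\un t}(b))$ otherwise; hence all prescribed boundary relations hold. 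To produce $F_{a,b}$ for arbitrary $a,b\in\A$ I choose Gaussian approximants $a_m\to a$ and $b_m\to b$ in norm with $a_m,b_m\in\A_{\text{an}}$. Each difference $h_{m,m'}=F_{a_m,b_m}-F_{a_{m'},b_{m'}}$ is bounded and holomorphic on $D$, so Hadamard's three-lines theorem applied successively in $z_1,\dots,z_n$ bounds $\sup_D|h_{m,m'}|$ by the maximum over the faces $\{\im\un z=\un\ga\}$ of $|h_{m,m'}|$; on each face this quantity is controlled by $\nor{b_m-b_{m'}}\nor{a_m}+\nor{b_{m'}}\nor{a_m-a_{m'}}$, uniformly in $\un t$, and hence tends to $0$. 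Therefore $(F_{a_m,b_m})_m$ is uniformly Cauchy on $D$; its limit $F_{a,b}$ is holomorphic on $\operatorname{int}D$, continuous and bounded on $D$, and inherits both the value $\tau(a\si_{\un t}(b))$ on $\bR^n$ and the prescribed face relations, so $\tau$ is a $(\si,\La_{\un\be})$-KMS state.

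The main obstacle is exactly this multivariable interplay: boundary control is available a priori only on the $2^n$ corner faces indexed by $C_{\un\be}$, and it must be propagated to the interior of the tube $D$. This is where the coordinate-wise application of the one-variable maximum principle (Hadamard's three lines for existence, Schwarz reflection for uniqueness) is indispensable, and where the Gaussian estimate is crucial, since it localizes the growth of $G_{a,b}$ so that these one-variable principles apply in each slice. The prescribing set $\La_{\un\be}$ enters only as the consistent labelling that records, across all faces simultaneously, which corner value of the single analytic function $G_{a,b}$ is the flipped one $\tau(\si_{\un t}(b)a)$.
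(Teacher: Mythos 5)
Your proof is correct, and its architecture is essentially the paper's. In the direction from the algebraic condition to the existence of the functions $F_{a,b}$ you argue exactly as the paper does: set $F_{a,b}=G_{a,b}$ with $G_{a,b}(\un{z})=\tau(a\si_{\un{z}}(b))$ on the dense $\si$-invariant subalgebra (using invariance and $\si_{i\un{\ga}}\si_{\un{t}}=\si_{\un{t}+i\un{\ga}}$ to read off the face values), and for general $a,b\in\A$ run the same uniform-Cauchy argument, in which the coordinate-wise three-lines/Phragm\'{e}n--Lindel\"{o}f principle bounds $\sup_{D}|F_{a_m,b_m}-F_{a_{m'},b_{m'}}|$ by its suprema over the $2^n$ faces indexed by $C_{\un{\be}}$, where the KMS relations yield the estimate $\nor{a_m-a_{m'}}\nor{b_m}+\nor{a_{m'}}\nor{b_m-b_{m'}}$. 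The one genuine divergence is in the converse direction: to identify a prescribed $F_{a,b}$ with $G_{a,b}$ for analytic $a,b$, the paper invokes the edge-of-the-wedge theorem for the cone $\bR_+^n$, whereas you prove a self-contained boundary-uniqueness lemma (a bounded holomorphic function on the open tube, continuous on $D$ and vanishing on $\bR^n$, vanishes identically) by Schwarz reflection plus a slicing induction; this is more elementary and makes the uniqueness mechanism explicit, at the cost of reproducing classical function theory that the paper compresses into a citation. One repair to that lemma: the two moves of your induction must be performed in the reverse order from the one you state. First apply the inductive hypothesis to the real slices $\{z_1=t_1\in\bR\}$, whose restrictions are bounded holomorphic in $(z_2,\dots,z_n)$, continuous up to the boundary, and vanish on $\bR^{n-1}$; this kills $h$ whenever $z_1$ is real. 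Only then, for fixed $(z_2,\dots,z_n)$ in the closed $(n-1)$-tube, does the $z_1$-slice vanish on the line $\im z_1=0$, so that reflection and the identity theorem apply. In the order you wrote, with $z_2,\dots,z_n$ kept in the open strips from the start, the one-variable uniqueness in $z_1$ cannot be invoked because that slice is not yet known to vanish anywhere. (A second, immaterial, slip: in your Gaussian growth estimate the constant should be the norm of the element being smoothed rather than $\nor{a}$ itself; nothing is affected, since all you use is boundedness of $\un{z}\mapsto\nor{\si_{\un{z}}(b)}$ on $D$, which already follows from continuity of $\un{s}\mapsto\nor{\si_{i\un{s}}(b)}$ on the compact box $\prod_{k=1}^n[0,\be_k]$.)
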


Even though this condition is necessary and sufficient for building analytic functions on tubes, it is rather strong.
In the next example we show that the Toeplitz-Nica-Pimsner algebras may not attain such states.

\begin{example}\label{E: non-example}
Let $\N\T(\bZ^2_+)$ be the Toeplitz-Nica-Pimsner of the trivial system $\id \colon \bZ^2_+ \to \End(\bC)$.
This is the universal C*-algebra generated by two doubly commuting isometries, say $V_{(1,0)}$ and $V_{(0,1)}$.
We can extend $V$ to a semigroup action $V \colon \bZ_+^2 \to \B(H)$ by isometries.
Then the monomials $V_{(x_1, x_2)} V_{(y_1, y_2)}^*$ span a dense subset of $\N\T(\bZ^2_+)$.
Moreover $\N\T(\bZ^2_+)$ admits a gauge action $\{\ga_{(z_1,z_2)}\}_{(z_1, z_2) \in \bT^2}$.
Let the group action $\si \colon \bR^2 \to \Aut(\N\T(\bZ^2_+))$ defined by
\[
\si_{(t_1,t_2)} = \ga_{(\exp(it_1), \exp(it_2))}.
\]
We will show that $\N\T(\bZ^2_+)$ does not admit $(\si,\La_{(\be_1, \be_2)})$-KMS states on $(\be_1, \be_2) \neq (0,0)$ for any prescribing set $\La_{(\be_1, \be_2)}$, with the trivial exception when $\be_1=0$ or $\be_2 =0$.
In this case there is reduction to the one-variable case.

First suppose that the prescribing set contains $\{(\be_1,0), (0,\be_2)\}$.
Then a $(\si,\La_{(\be_1, \be_2)})$-KMS state $\psi$ would satisfy
\[
\psi(V_{(1,0)} V_{(1,0)}^*) = \psi(V_{(1,0)}^* \si_{(i\be_1,0)}(V_{(1,0)})) = e^{-\be_1}.
\]
On the other hand we get that
\[
\psi(V_{(1,0)} V_{(1,0)}^*) = \psi(V_{(1,0)}^* \si_{(0,i\be_2)}(V_{(1,0)})) = \psi(V_{(1,0)}^* V_{(1,0)}) =1.
\]
Therefore $\be_1 =0$ and similarly $\be_2 =0$.

Next assume that $\La_{(\be_1, \be_2)} = \{(\be_1,\be_2)\}$.
Then we obtain
\[
1 = \psi(V_{(1,0)}^* V_{(1,0)}) = \psi(V_{(1,0)}^* \si_{(i\be_1,0)}(V_{(1,0)})) = e^{-\be_1},
\]
hence $\be_1 = 0$ and similarly $\be_2=0$.

Finally assume that $\La_{(\be_1, \be_2)}$ contains $(\be_1,0)$ and does not contain $(0,\be_2)$.
Then it follows that $\La_{(\be_1, \be_2)} = \{(\be_1,0), (\be_1,\be_2)\}$, since
\begin{align*}
\psi(V_{\un{x}} V_{\un{y}}^* V_{\un{z}} V_{\un{w}}^*)
& =
\psi(V_{\un{z}} V_{\un{w}}^* \si_{(i\be_1,0)}(V_{\un{x}} V_{\un{y}}^*)) \\
& =
\psi(V_{\un{z}} V_{\un{w}}^* \si_{(0,i\be_2)}\si_{(i\be_1,0)}(V_{\un{x}} V_{\un{y}}^*)) \\
& =
\psi(V_{\un{z}} V_{\un{w}}^* \si_{i(\be_1,\be_2)}(V_{\un{x}} V_{\un{y}}^*)).
\end{align*}
It is evident that the $(\si,\La_{(\be_1, \be_2)})$-KMS states in this case are in bijection with the $(\si',\be_1)$-KMS states with respect to the one-variable action $\si'_t = \si_{(t,0)}$.
Such states exist, but they are trivial in the sense that they do not give non-trivial multivariable states.
The case when $\La_{(\be_1, \be_2)}$ contains $(0,\be_2)$ is settled in the dual way.
\end{example}

Let us finish this off with the required proofs.
In what follows we use the simplified multivariable notation.
We write $t \equiv \un{t} \in \bC^n$ and we use the symbol $t_i$ to denote either the i-th coordinate of $t$ or the vector $t_i \cdot \Bi \in \bC^n$.
The difference will be clear by the context.

\subsection{Proof of Proposition \ref{P: analytic}}

For this subsection fix an $a \in \A$ and define the sequence $(a_m)$ by
\begin{align*}
a_m
& =
\sqrt{\frac{m}{\pi}}^{\, n} \cdot \int_{\bR^n} \si_t(a) e^{-m \sum_{k=1}^n t_k^2} dt \\
& =
\sqrt{\frac{m}{\pi}}^{\, n} \cdot \int_{\bR} \cdots \int_{\bR} \si_t(a) e^{-mt_1^2} \cdots e^{-mt_n^2} dt_1 \dots dt_n.
\end{align*}
By \cite[Proposition 2.5.18]{BraRob87} the one-variable integral $\int_\bR \si_{t_1}(a) e^{-nt_1^2} dt_1$ defines an element in $\A$ for $\si_{t_1} \equiv \si_{(t_1,0,\dots,0)}$.
By using induction and the formula
\[
\int_{\bR} \si_t(a) e^{-mt_1^2} \cdots e^{-mt_n^2} dt_1
=
\si_{t - t_1} \left( \int_\bR \si_{t_1}(a) e^{-mt_1^2} dt_1\right) e^{-mt_2^2} \cdots e^{-mt_n^2},
\]
we deduce that the elements $a_m$ are well defined.
Let $f_m \colon \bC^n \to A$ be the function
\begin{align*}
f_m(z)
& =
\sqrt{\frac{m}{\pi}}^{\, n} \cdot \int_{\bR^n} \si_t(a) e^{-m \sum_{k=1}^n (t_k - z_k)^2} dt \\
& =
\sqrt{\frac{m}{\pi}}^{\, n} \cdot \int_{\bR} \cdots \int_{\bR} \si_t(a) e^{-m(t_1-z_1)^2} \cdots e^{-m(t_n-z_n)^2} dt_1 \dots dt_n.
\end{align*}
The function $t_k \mapsto e^{m(t_k-z_k)^2}$ is in $L^1(\bR)$, hence by induction on \cite[Proposition 2.5.18]{BraRob87} we get that $f_m(z)$ is a well defined element in $\A$.
For $z = s \in \bR^n$ we compute
\begin{align*}
f_m(s)
& =
\sqrt{\frac{m}{\pi}}^{\, n} \cdot \int_{\bR} \cdots \int_{\bR} \si_t(a) e^{-m(t_1-s_1)^2} \cdots e^{-m(t_n-s_n)^2} dt_1 \dots dt_n \\
& =
\sqrt{\frac{m}{\pi}}^{\, n} \cdot \int_{\bR} \cdots \int_{\bR} \si_{t+s}(a) e^{-mt_1^2} \cdots e^{-mt_n^2} dt_1 \dots dt_n \\
& =
\si_s\left( \sqrt{\frac{m}{\pi}}^{\, n} \cdot \int_{\bR} \cdots \int_{\bR} \si_{t}(a) e^{-mt_1^2} \cdots e^{-mt_d^2} dt_1 \dots dt_n \right) \\
& =
\si_s(a_m).
\end{align*}
Therefore the mapping $s \mapsto \si_s(a_m)$ extends to a well defined function $f_m \colon \bC^n \to \A$.
We will show that $f_m$ is entire analytic, i.e. the limit
\begin{align*}
\lim_{h_k \to 0} h_k^{-1} [f_m(z+h_k) - f_m(z)]
\end{align*}
exists for all $z \in \bC^n$ and $k=1, \dots, n$.
For $k=1$ we calculate
\begin{align*}
f_m(z+h_1) - f_m(z)
& =
\sqrt{\frac{m}{\pi}}^{\, n} \int_{\bR} \int_{\bR^{n-1}} x_a(t) (e^{-m(t_1-z_1-h_1)^2} - e^{-mz_1^2}) \wh{dt_1}dt_1,
\end{align*}
where $\wh{dt_1} = dt_2 \cdots dt_n$ and
\begin{align*}
x_a(t)
=
\si_t(a) e^{-mt_2^2} \cdots e^{-mt_n^2}
=
\si_{t_1} \left(\si_{t-t_1}(a) e^{-mt_2^2} \cdots e^{-mt_d^2} \right) =\si_{t_1}(a').
\end{align*}
Therefore we have
\begin{align*}
f_m(z+h_1) - f_m(z)
& =
\sqrt{\frac{m}{\pi}} \int_{\bR} \si_{t_1}(a'')(e^{-m(t_1-z_1-h_1)^2} - e^{-mz_1^2})dt_1,
\end{align*}
where
\[
a'' = \sqrt{\frac{m}{\pi}}^{\, n-1} \int_{\bR^{n-1}} \si_{t-t_1}(a) e^{-mt_2^2} \cdots e^{-mt_n^2} \wh{dt_1},
\]
and thus $\nor{a''} \leq \nor{a}$.
Consequently it suffices to show that the limit
\[
\lim_{h_1 \to 0} h_1^{-1}[\int_{\bR} \si_{t_1}(a'')(e^{-m(t_1-z_1-h_1)^2} - e^{-mz_1^2})dt_1]
\]
exists in $\A$.
This follows from \cite[Proposition 2.5.22]{BraRob87}.

It remains to show that the sequence $(a_m)$ converges to $a$ in norm.
To this end we compute
\begin{align*}
a_m - a
& =
\sqrt{\frac{m}{\pi}}^{\, n}  \int_{\bR^n} (\si_t(a) - a) e^{-m \sum_{k=1}^n t_k^2} dt.
\end{align*}
For $\eps>0$ let $\de>0$ such that $\nor{\si_t(a) - a} <\eps/2$ for all $|t| < \de$.
Moreover for the same $\eps>0$ let $m_0 \in \bZ_+$ such that
\[
\sqrt{\frac{m}{\pi}}^{\, n} \cdot \int_{|t| \geq \de} \si_t(a) e^{-m \sum_{k=1}^n t_k^2} dt \leq \frac{\eps}{4 \nor{a}},
\]
for all $m \geq m_0$.
Then for $m \geq m_0$ we get that
\begin{align*}
\nor{a_m - a}
& =
\sqrt{\frac{m}{\pi}}^{\, n} \cdot \int_{|t| \leq \de} \nor{\si_t(a) - a} e^{-m \sum_{k=1}^n t_k^2} dt \, + \\
& \hspace{4cm} +
\sqrt{\frac{m}{\pi}}^{\, n} \cdot \int_{|t| \geq \de} \nor{\si_t(a) - a} e^{-m \sum_{k=1}^n t_k^2} dt \\
& \leq
\frac{\eps}{2} \sqrt{\frac{m}{\pi}}^{\, n} \cdot \int_{|t| \leq \de} e^{-m \sum_{k=1}^n t_k^2} dt
 +
2 \nor{a} \sqrt{\frac{m}{\pi}}^{\, n} \cdot \int_{|t| \geq \de} e^{-m \sum_{k=1}^n t_k^2} dt \\
& \leq
\frac{\eps}{2} \sqrt{\frac{m}{\pi}}^{\, n} \cdot \sqrt{\frac{\pi}{m}}^{\, n} + 2 \nor{a} \frac{\eps}{4 \nor{a}} = \eps,
\end{align*}
which finishes the proof. \hfill{\qed}

\subsection{Extending the action}

The action $\si$ extends pointwise on $\A_{\text{an}}$ to an action of $\bC^n$ in the sense that
\[
\si_z \si_w(a) = \si_{z+w}(a) \foral a \in \A_{\text{an}} \text{ and } z,w \in \bC^n.
\]
Indeed an element
\[
a = \sqrt{\frac{m}{\pi}}^{\, n} \int_{\bR^n} \si_t(b) e^{-m\sum_{k=1}^n t_k^2} dt \; \in \; \A_{\textup{an}}
\]
for some $b \in \A$.
Then the function $\bR^n \ni r \mapsto \si_r \si_w(a)$ is the restriction of the entire analytic function $z \mapsto \si_z \si_w(a)$.
For the latter first compute
\begin{align*}
\si_r \si_w(a)
& =
\si_r \Big( \sqrt{\frac{m}{\pi}}^{\, n} \int_{\bR^n} \si_t(b) e^{-m\sum_{k=1}^n (t_k - w_k)^2} dt \Big) \\
& =
\sqrt{\frac{m}{\pi}}^{\, n} \int_{\bR^n} \si_{r+t}(b) e^{-m\sum_{k=1}^n (t_k - w_k)^2} dt \\
& =
\sqrt{\frac{m}{\pi}}^{\, n} \int_{\bR^n} \si_{t}(b) e^{-m\sum_{k=1}^n (t_k - r_k - w_k)^2} dt.
\end{align*}
Hence the function $r \mapsto \si_r\si_w(a)$ extends to the entire function
\[
z \mapsto \sqrt{\frac{m}{\pi}}^{\, n} \int_{\bR^n} \si_t(b) e^{-m\sum_{k=1}^n (t_k - z_k - w_k)^2} dt,
\]
which implies that $\si_w(a)$ is an analytic element.
Thus it has an analytic extension given by $z \mapsto \si_z \si_w(a)$.
However a second analytic extension is given by
\[
z \mapsto \si_{z+w}(a) := \sqrt{\frac{m}{\pi}}^{\, n} \int_{\bR^n} \si_{t}(b) e^{-m\sum_{k=1}^n (t_k - z_k - w_k)^2} dt.
\]
Uniqueness of the analytic extension then shows that $\si_z \si_w (a) = \si_{z+w}(a)$.

\subsection{Invariance of KMS states}

For an analytic element $a$ in $\A$ we write $\si_z(a) := f_a(z)$.
The $(\si,\La_{\be})$-KMS condition for a state $\tau$ of $A$ is then
\[
\tau(a f_b(i\ga)) = \tau(a \si_{i\ga}(b)) =
\begin{cases}
\tau(ba) & \text{ when } \ga \in \La_{\be}, \\
\tau(ab) & \text{ when } \ga \notin \La_{\be},
\end{cases}
\]
for all $a,b$ in a norm-dense $*$-subalgebra of analytic elements of $\A$.

\begin{proposition}
Let $\tau$ be a state on $\A$ that satisfies the $(\si,\La_{\be})$-KMS condition.
Then $\tau$ is $\si$-invariant.
\end{proposition}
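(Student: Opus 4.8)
The plan is to show that for a dense set of elements $b$ the function $g_b(\un z) := \tau(\si_{\un z}(b))$ is a bounded entire function on $\bC^n$, hence constant by Liouville; evaluating on $\bR^n$ then gives $\tau\si_{\un t} = \tau$, and density finishes the job.

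First I would record the one consequence of the $(\si,\La_{\un\be})$-KMS condition that does not see the prescribing set: for every analytic $b$ and every $\un\ga \in C_{\un\be}$,
\[
\tau(\si_{i\un\ga}(b)) = \tau(b).
\]
When $\A$ is unital this is immediate on putting $a=1$ in the condition, since $1 \in \A_{\textup{an}}$ (it is $\si$-fixed) and both branches collapse to $\tau(1\cdot b) = \tau(b\cdot 1) = \tau(b)$. In general I would replace $1$ by an approximate identity $(u_\la)$ lying in the dense analytic subalgebra and pass to the limit, using that $\tau(u_\la b) \to \tau(b)$ and $\tau(b u_\la) \to \tau(b)$ simultaneously, so that both branches of the condition again produce the single value $\tau(b)$.

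Next I would upgrade this to periodicity. Since the subalgebra on which the condition holds is $\si$-invariant, I may apply the identity above to $\si_{\un t}(b)$ in place of $b$; combined with $\si_{i\un\ga}\si_{\un t} = \si_{\un t + i\un\ga}$ on analytic elements (the ``Extending the action'' subsection), this yields $g_b(\un t + i\un\ga) = g_b(\un t)$ for all real $\un t$ and all $\un\ga \in C_{\un\be}$. Taking $\un\ga = \be_k\,\bo k$, which lies in $C_{\un\be}$, the entire function $G(\un z) = g_b(\un z + i\be_k\,\bo k) - g_b(\un z)$ vanishes on the totally real subspace $\bR^n$, hence vanishes identically; thus $g_b$ is entire and $i\be_k$-periodic in each coordinate $z_k$.

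The main obstacle is boundedness, which periodicity alone does not supply: a generic analytic element need not have $\si_{\un z}(b)$ bounded on the tube $D$. Here I would use the explicit Gaussian elements from the proof of Proposition \ref{P: analytic}. For $b = b_m = \sqrt{\frac{m}{\pi}}^{\,n}\int_{\bR^n}\si_t(c)e^{-m|t|^2}\,dt$, the same integral with $t_k$ replaced by $t_k - z_k$ gives $\nor{\si_{\un z}(b_m)} \le \nor{c}\,e^{m|\im\un z|^2}$, which is uniformly bounded for $\im\un z$ in the compact box $\prod_{k}[0,\be_k]$; since such $b_m$, together with their finite sums and products (for which this bound is preserved), form a norm-dense $*$-subalgebra, I would run the whole argument on these. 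Then $g_b$ is bounded on $D$, hence by periodicity bounded on all of $\bC^n$, so Liouville forces $g_b$ constant, giving $\tau(\si_{\un t}(b)) = g_b(\un t) = g_b(\un 0) = \tau(b)$. Finally I would extend $\tau\si_{\un t} = \tau$ from this dense set to all of $\A$ by norm-continuity of $\tau$ and of each $\si_{\un t}$.
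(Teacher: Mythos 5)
Your Steps 1 and 2 are sound, and in fact handle a point the paper's own proof glosses over: you establish the periodicity identity first for real $\un{t}$, where $\si$-invariance of the subalgebra guarantees $\si_{\un{t}}(b)$ stays in it, and only then continue analytically. The gap is in Step 3. The $(\si,\La_{\un{\be}})$-KMS condition is a hypothesis about \emph{some} norm-dense $\si$-invariant $*$-subalgebra $\B \subseteq \A_{\textup{an}}$; it is not yours to choose. Your identity $\tau(\si_{i\un{\ga}}(b)) = \tau(b)$, and hence the periodicity of $g_b$, are available only for $b \in \B$, whereas your boundedness estimate is proved only for the Gaussian elements $b_m$ and the $*$-algebra they generate, which need not be contained in $\B$. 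You cannot simply ``run the whole argument'' on the Gaussian algebra: the KMS condition is not known to hold there, and it cannot be transported from $\B$ by approximation, since $b \mapsto \si_{i\un{\ga}}(b)$ is an analytic continuation and is not norm-continuous. Extending the condition from $\B$ to all of $\A_{\textup{an}}$ is possible, but only through the function-theoretic characterization of Theorem \ref{T: KMS char} (Phragm\'{e}n-Lindel\"{o}f plus edge-of-the-wedge), which is far heavier than the proposition you are proving and, in the paper's logical order, comes after it.

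The good news is that the obstacle you introduced the Gaussian elements to overcome is not actually there. For \emph{any} analytic element $b$, the extended group law $\si_{\un{z}} = \si_{\re \un{z}}\,\si_{i \im \un{z}}$ (subsection on extending the action), together with the fact that $\si_{\re \un{z}}$ is an isometric $*$-automorphism of $\A$, gives $\nor{\si_{\un{z}}(b)} = \nor{\si_{i \im \un{z}}(b)}$; the right-hand side is a continuous function of $\im \un{z}$ on the compact box $\prod_{k} [0,\be_k]$, hence bounded there, so $g_b$ is automatically bounded on the tube $D$. This is precisely the bound $M = \sup\{ \nor{\si_{it_1}(a_1)} \mid t_1 \in [0,\be_1]\}$ used in the paper. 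With this observation your argument closes for every $b \in \B$: periodicity plus boundedness on $D$ gives boundedness on $\bC^n$, Liouville gives constancy, and density finishes. (You should also dispose of the degenerate case where some $\be_k = 0$, in which periodicity says nothing about that coordinate; the paper reduces to the subsystem on the remaining coordinates.) Modulo these repairs your route coincides with the paper's, the only cosmetic difference being that you apply Liouville to all coordinates simultaneously where the paper argues coordinate by coordinate and concludes by induction.
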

\begin{proof}
Let $\be = (\be_1, \dots, \be_n)$ in $\bR^n$.
Without loss of generality we may assume that $\be_k \neq 0$ for all $k = 1, \dots, n$.
Otherwise we pass to the subsystem on $\bR^{n-d}$ where $d$ is the number of the $\be_k$ that are zero.

Fix $a$ be an element in a norm-dense $*$-subalgebra of $\A_{\text{an}}$ given by the definition of the KMS condition.
Define the function $F(t) := \tau\si_t(a)$.
Then $F(t)$ extends to an entire analytic function $F(z) = \tau\si_z(a)$.
Fix attention to the first co-ordinate and let
\[
a_1 = \si_{z-z_1}(a) \qand F_1( \cdot ) = F(\cdot, z_2, \dots, z_d).
\]
Since
\[
|F_1(z_1)| \leq \nor{\si_{z_1}(a_1)} = \nor{\si_{\re(z_1)} \si_{i \im(z_1)}(a_1)} \leq \nor{\si_{i \im(z_1)}(a_1)}
\]
we get that $F_1$ is bounded on the strip $D_1 = \{ z_1 \in \bC \mid 0 \leq \im{z_1} \leq \be_1 \}$ by
\[
M = \sup\{ \nor{\si_{it_1}(a_1)} \mid t_1 \in [0, \be_1] \}.
\]
Then $M$ is finite since the function
\[
[0,\be_1] \ni t_1 \mapsto \nor{\si_{it_1}(a_1)}
\]
is continuous.
Let $(e_j)$ be an approximate identity of $\A$ and compute
\begin{align*}
F_1(z_1 + i\be_1)
& =
\lim_j \tau(e_j \, \si_{i \be_1} \si_{z_1}(a_1)) = \lim_j \tau(\si_{z_1}(a_1) \, e_j)
  =
F(z_1).
\end{align*}
Thus $F_1$ is periodic, hence bounded by $M$.
By Liouville's Theorem then $F_1$ is constant.
Hence we have that $\partial F / \partial z_1 = \partial F_1 / \partial z_1 =0$, therefore $F(z_1, \dots, z_d) = F(z_2, \dots, z_d)$.
Inductively we get that $F$ is constant in $\bC^n$.
Therefore we obtain
\[
\tau(a) = F(0) = F(t) = \tau\si_t(a)
\]
for all $a$ in a norm-dense $*$-subalgebra of $\A_{\text{an}}$.
Since $\A_{\text{an}}$ is dense in $\A$ we get that $\tau(a) = \tau \si_t(a)$ for all $a\in \A$.
\end{proof}

\subsection{Proof of Theorem \ref{T: KMS char}}

First suppose that $\tau$ is a state on $\A$ such that
\[
\tau(a \si_{i\ga}(b)) =
\begin{cases}
\tau(b a) & \text{ when } \ga \in \La_{\be}, \\
\tau(ab) & \text{ when } \ga \notin \La_{\be},
\end{cases}
\]
for all $a,b$ in a norm-dense $*$-subalgebra of $\A_{\text{an}}$.
For such fixed $a,b \in \A_{\text{an}}$ let the function
\[
\bC^n \ni z \mapsto F_{a,b}(z) = \tau(a \si_z(b)).
\]
Then $F_{a,b}$ is entire analytic and
\begin{align*}
F_{a,b}(t + i \ga) =
\begin{cases}
\tau(\si_t(b)a) & \text{ when } \ga \in \La_{\be}, \\
\tau(a \si_t(b)) & \text{ when } \ga \notin \La_{\be},
\end{cases}
\end{align*}
for all $\ga \in C_{\be}$.
The function $\bC^n \ni z \mapsto \si_z(b)$ is analytic hence the function
\[
\{s \in \bR^n \mid 0 \leq s_k \leq \be_k\} \to \bR : s \mapsto \nor{\si_{is}(b)}
\]
is continuous, thus bounded.
For
\[
M = \sup\{ \nor{\si_{is}(b)} \mid 0 \leq s_k \leq \be_k \}
\]
we get that
\[
|F_{a,b}(t+is)| = |\tau(a \si_t \si_{is}(b))| \leq M \cdot \nor{a}
\]
for all $t + is \in D$.

Now we pass to the general case where $a,b \in A$.
To this end let $a_m$ and $b_m$ in the dense subalgebra of $\A_{\textup{an}}$ with $\nor{a_m} \leq \nor{a}$ and $\nor{b_m} \leq \nor{b}$ such that $a= \lim_m a_m$ and $b =\lim_m b_m$, and define $F_m(z) = F_{a_m,b_m}(z)$.
Our aim is to show that the sequence $(F_m)$ is uniformly Cauchy so that defining $F_{a,b}$ as the limit of $F_m$ gives rise to a continuous and bounded function on $D$.
Moreover we will eventually have that
\begin{align*}
F_{a,b}(t+ i \ga)
& =
\lim_m \tau(a_m\si_t(b_m))
=
\tau(a\si_t(b)),
\end{align*}
when $\ga \notin \La_{\be}$, and
\begin{align*}
F_{a,b}(t+ i \ga)
& =
\lim_m F_{a_m,b_m}(t + i\ga)
 =
\lim_m \tau(a_m \si_{t + i \ga}(b_m)) \\
& =
\lim_m \tau(\si_t(b_m) a_m)
=
\tau(\si_t(b) a),
\end{align*}
when $\ga \in \La_{\be}$.

Let us begin with the following remark. Given $z=(z_1,\dots,z_n)$ consider the function $f_m(\zeta_1) = F_m(\zeta_1,z_2,\dots,z_n)$ as a function on $\bC$.
Then $f_m$ is analytic on $z_1$ since $F_m$ is analytic on $z$.
Therefore $f_m$ is analytic on
\[
D_1:=\{x + iy \in \bC \mid 0 \leq y \leq \be_1\}.
\]
Recall that by extending the action we get that
\[
\si_{(x+iy, z_2, \dots, z_n)}(b_m) = \si_{(x+iy,0,\dots,0)} \si_{(0,z_2,\dots,z_n)}(b_m).
\]
Moreover $f_m$ is continuous on $D_1$ and
\[
|f_m(x+iy)| = |\tau(a_m \si_{x + iy}'(b_m'))| \leq \nor{a_m} \cdot \nor{\si_{iy}'(b_m')},
\]
where $\si_{x+iy}'= \si_{(x+iy, 0,\dots, 0)}$ and $b_m' = \si_{(0,z_1,\dots,z_n)}(b_m)$.
However the function
\[
[0,\be_1] \ni y \mapsto \nor{\si_{iy}'(b_m')}
\]
is continuous, hence bounded.
Consequently $f_m(x+iy)$ is bounded.
By the Phragm\'{e}n-Lindel\"{o}f principle $f_m$ admits its supremum at the boundary of $D_1$, thus
\begin{align*}
|F_m(z_1, \dots, z_d)|
& =
|f_m(z_1)|
 \leq
\max\{ \sup_{x \in \bR} |f(x)|, \sup_{x \in \bR} |f(x+ i \be_1)| \} \\
& =
\max\{ \sup_{x \in \bR} |F(x,z_2,\dots, z_n)|, \sup_{x \in \bR} |F(x + i \be_1, z_2, \dots, z_n)| \}.
\end{align*}
The same holds for any co-ordinate.

For $\eps>0$ let $m_0 \in \bZ_+$ such that $\nor{a_l - a_m} < \eps/2\nor{a}$ and $\nor{b_l -b_m} < \eps/2 \nor{b}$ for $m,l \geq m_0$.
Following inductively the same arguments as above we derive that the function $F_l(z) - F_m(z)$ is bounded by the maximum of the values
\[
\sup_{x_1} \dots \sup_{x_n} |F_l((x_1,\dots, x_n) + i \ga) - F_m((x_1,\dots, x_n) + i \ga)|,
\]
with respect to $\ga = \sum \eps_k \be_k$ for all choices of $\eps_k=0,1$.
For $\ga \notin \La_{\be}$ we obtain
\begin{align*}
|F_l((x_1,\dots, x_n) + i \ga) - F_m((x_1,\dots, x_n) + i \ga)|
& = \\
& \hspace{-4cm} =
|\tau(a_l \si_{i\ga}\si_{x}(b_l)) - \tau(a_m \si_{i\ga}\si_{x}(b_m))| \\
& \hspace{-4cm} =
|\tau(a_l \si_{x}(b_l)) - \tau(a_m \si_{x}(b_m))| \\
& \hspace{-4cm} \leq
|\tau((a_l-a_m)\si_x(b_l))| + |\tau(a_m\si_x(b_l-b_m))| \\
& \hspace{-4cm} \leq
\nor{a_l - a_m} \nor{b_l} + \nor{a_m} \nor{b_l - b_m} < \eps .
\end{align*}
On the other hand for $\ga \in \La_{\un{\be}}$ we obtain
\begin{align*}
|F_l((x_1,\dots, x_n) + i \ga) - F_m((x_1,\dots, x_n) + i \ga)|
& = \\
& \hspace{-4cm} =
|\tau(a_l \si_{i\ga}\si_{x}(b_l)) - \tau(a_m \si_{i\ga}\si_{x}(b_m))| \\
& \hspace{-4cm} =
|\tau(\si_{x}(b_l)a_l) - \tau(\si_{x}(b_m) a_m)| \\
& \hspace{-4cm} \leq
|\tau(\si_x(b_l)(a_l-a_m))| + |\tau(\si_x(b_l-b_m)a_m)| \\
& \hspace{-4cm} \leq
\nor{a_l - a_m} \nor{b_l} + \nor{a_m} \nor{b_l - b_m} < \eps .
\end{align*}
Therefore $(F_m)$ is indeed a Cauchy sequence uniformly on $D$.

For the converse suppose that for every $a,b \in \A$ there exists a complex function $F_{a,b}$ which is analytic on the interior of
\[
D:=\{z \in \bC^n \mid 0 \leq \im(z_k) \leq \be_k, k=1, \dots, n\}
\]
and continuous on $D$ (hence bounded) such that $F_{a,b}(t) = \tau(a \si_t(b)$ and
\[
F(t+ i\ga) =
\begin{cases}
\tau(\si_{t}(b)a) & \text{ when } \ga \in \La_{\be}, \\
\tau(a \si_t(b)) & \text{ when } \ga \notin \La_{\be}.
\end{cases}
\]
For $a,b \in \A_{\text{an}}$ define $G_{a,b}(z) = \tau(a \si_z(b))$ with $z\in \bC^n$.
Then $G_{a,b}$ is entire analytic and
\[
G_{a,b}(t) = \tau(a \si_t(b)) = F_{a,b}(t) \foral t \in \bR^n.
\]
By the edge-of-the-wedge theorem for the positive cone $\bR_+^n$ we get that $F_{a,b}(z) = G_{a,b}(z)$ for all $z \in D$ hence $F_{a,b}(z) = \tau(a \si_z(b))$.
The second property of $F_{a,b}$ implies
\[
\tau(a \si_{i\ga}(b)) = F_{a,b}(i\ga) = \tau(ab)
\]
for $\ga \notin \La_{\be}$, and
\[
\tau(a \si_{i\ga}(b)) = F_{a,b}(i\ga) = \tau(ba)
\]
for $\ga \in \La_{\be}$. Hence $\tau$ satisfies the $(\si,\La_{\un{\be}})$-KMS condition.

As in \cite[Proposition 5.3.7]{BraRob97}, en passant we proved that such an $F_{a,b}$ satisfies
\[
\sup\{|F_{a,b}(z)| \mid z \in D \} \leq \nor{a} \cdot \nor{b},
\]
and that when $a,b \in \A_{\text{an}}$ then $F_{a,b}$ is the restriction of the function $z \mapsto \tau(a\si_z(b))$ to $D$.  \hfill{\qed}


\end{document}